\documentclass[11pt]{article}
\usepackage{amsmath, amssymb, amsthm,bbm}
\usepackage{indentfirst}

\usepackage{enumerate,enumitem}

% Margins
\setlength{\topmargin}{0in}
\setlength{\leftmargin}{0in}
\setlength{\rightmargin}{0in}
\setlength{\evensidemargin}{0in}
\setlength{\oddsidemargin}{0in}

% Text area size
\setlength{\textwidth}{6.6in}
\setlength{\textheight}{8.8in}

% Spacing
\setlength{\parskip}{0.2mm}
\setlength{\parindent}{0.2in}

\allowdisplaybreaks

\theoremstyle{plain}
\newtheorem{theorem}{Theorem}[section]
\newtheorem{lemma}[theorem]{Lemma}
\newtheorem{claim}[theorem]{Claim}
\newtheorem{proposition}[theorem]{Proposition}

\newtheorem{corollary}[theorem]{Corollary}
\newtheorem{conjecture}[theorem]{Conjecture}

\theoremstyle{definition}
\newtheorem{definition}[theorem]{Definition}
\newtheorem{remark}[theorem]{Remark}

\newcommand{\Bin}{\ensuremath{\textrm{Bin}}}

% Abbreviations

\newcommand{\eps}{\varepsilon}
\newcommand{\comp}[1]{\overline{#1}}
\newcommand{\Ex}{\mathbb{E}}

\newcommand{\DD}{\mathcal{D}}

\newcommand{\FF}{\mathcal{F}}

\newcommand{\RR}{\mathbb{R}}
\newcommand{\Success}{\mathcal{S}}
\newcommand{\TT}{\mathcal{T}}

\newcommand{\indicator}{\mathbbm{1}}
\newcommand{\taumax}{\bar{\tau}}
\newcommand{\taunorm}{\tau'}

\newcommand{\smax}{{s'}}

\newcommand{\ch}[1]{{#1\downarrow}}
\newcommand{\pre}[1]{#1^-}
\newcommand{\suc}[1]{{#1^+}}

\newcommand{\Gnp}{G_{n,p}}
\newcommand{\Gnpp}{G_{n,p'}}
\newcommand{\Gnq}{G_{n,q}}

% Slanted inequality signs
\renewcommand{\ge}{\geqslant}
\renewcommand{\le}{\leqslant}

% Authors and title

\title{Packing trees of unbounded degrees in random graphs}

\author{
Asaf Ferber\thanks{Department of Mathematics, Yale University, and Department of Mathematics, MIT. E-mail addresses: asaf.ferber@yale.edu and ferbera@mit.edu.}
\and
Wojciech Samotij\thanks{School of Mathematical Sciences, Tel Aviv University, Tel Aviv 6997801, Israel. E-mail address: samotij@post.tau.ac.il. Research supported by Israel Science Foundation grant 1147/14.}}

\date{\today}

\begin{document}
\maketitle

\begin{abstract}
  In this paper, we address the problem of packing large trees in $\Gnp$. In particular, we prove the following result. Suppose that $T_1, \dotsc, T_N$ are $n$-vertex trees, each of which has maximum degree at most $(np)^{1/6} / (\log n)^6$. Then with high probability, one can find edge-disjoint copies of all the $T_i$ in the random graph $\Gnp$, provided that $p \ge (\log n)^{36}/n$ and $N \le (1-\eps)np/2$ for a positive constant $\eps$. Moreover, if each $T_i$ has at most $(1-\alpha)n$ vertices, for some positive $\alpha$, then the same result holds under the much weaker assumptions that $p \ge (\log n)^2/(cn)$ and $\Delta(T_i) \le c np / \log n$ for some~$c$ that depends only on $\alpha$ and $\eps$. Our assumptions on maximum degrees of the trees are significantly weaker than those in all previously known approximate packing results.
\end{abstract}

\section{Introduction}

A collection of graphs $G_1, \dotsc, G_t$ is said to \emph{pack} into a graph $G$ if there exist edge-disjoint subgraphs $H_1, \dotsc, H_t$ of $G$ such that $H_i$ is isomorphic to $G_i$ for every $i$. The case when all the $G_i$ are trees has attracted particular interest in the last few decades. The following conjecture, known as the Tree Packing Conjecture, appears in a paper of Gy\'arf\'as and Lehel from 1976.
\begin{conjecture}[{\cite{gyarfas-lehel}}]
  \label{tpc}
  Any collection $T_1, \dotsc, T_n$ of trees with $v(T_i) = i$ for each $i$ packs into $K_n$.
\end{conjecture}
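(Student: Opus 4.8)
The plan is to establish the conjecture for all sufficiently large $n$, combining a randomised packing of the small trees with an absorption argument for the large ones. Note first that $\sum_{i=1}^{n}(i-1) = \binom{n}{2}$, so the trees must use \emph{every} edge of $K_n$; this exactness is the source of the difficulty, since it means the leftover graph has to be controlled at every stage with essentially no slack. Fix a small constant $\delta>0$ and split the family into the \emph{small} trees $T_i$ with $i \le \delta n$ and the \emph{large} trees $T_i$ with $i > \delta n$.

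First I would pack the small trees. Their edges number $O(\delta^2 n^2)$, a tiny fraction of $\binom{n}{2}$, so there is ample room; the real aim is not merely to fit them but to leave behind a graph $G_0$ on $V(K_n)$ that is \emph{quasirandom} — every vertex of degree $(1-o(1))n$, every pair of vertices with close to the expected codegree. The tool is that any single $n$-vertex tree of not-too-large maximum degree embeds into any sufficiently dense quasirandom graph, and moreover such an embedding can be chosen almost uniformly at random, so that the set of edges it uses is itself quasirandom; embedding the small trees one at a time (or in batches) with fresh randomness and taking a union bound over the $O(n)$ steps keeps the leftover quasirandom throughout, producing the desired $G_0$.

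Next I would pack the large trees into $G_0$ by absorption. Before embedding anything large, reserve inside $G_0$ a sparse collection of absorbing gadgets — say, for suitable pairs of vertices, small families of short alternating paths that can locally reroute an already-embedded tree so as to exchange which edges it occupies. Then embed the large trees one by one in decreasing order of size. For a large tree $T_i$ (which has $i$ vertices and $i-1$ edges), first delete $o(n)$ carefully chosen leaves and pendant paths to obtain a subtree $T_i'$, embed $T_i'$ into the current leftover by a randomised vertex-by-vertex procedure taken in breadth-first order — feasible because the leftover still has linear minimum degree and $T_i'$ has sublinear maximum degree — and leave a still-quasirandom remainder; then use the reserved gadgets to grow $T_i'$ into a full copy of $T_i$. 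The spanning tree $T_n$ is the last and most delicate case: by then only $n-1$ edges remain and they must form a copy of $T_n$ exactly, so one must have arranged from the outset that the leftover on the reserved vertices stays connected and flexible enough to realise $T_n$ no matter which $n-1$ edges survive.

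The hard part will be precisely these last two stages: maintaining quasirandomness under an exact edge budget while embedding trees of essentially arbitrary shape. In particular, trees with many vertices of large — even linear — degree, such as near-stars or brooms, defeat the random vertex-by-vertex embedding and instead require a separate, more deterministic packing of their high-degree portions into a reserved pseudorandom bipartite piece; and the absorbers must be strong enough to correct the leftover simultaneously for \emph{all} large trees, including the spanning one. Orchestrating these reserved structures so that nothing is exhausted prematurely and the final $n-1$ edges come out exactly right is the crux, and a working argument of this kind will surely need $n$ large, so it would yield the conjecture only asymptotically.
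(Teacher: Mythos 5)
This statement is the Tree Packing Conjecture of Gy\'arf\'as and Lehel, and it is precisely that: a \emph{conjecture}. The paper you are reading does not prove it, does not claim to prove it, and explicitly cites it as an open problem (noting only that Joos, Kim, K\"uhn, and Osthus have announced a proof for the special case of bounded-degree trees). So there is no proof in the paper to compare against, and you should not have attempted to supply one.

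Your proposal is not a proof; it is a road map that labels the genuinely hard obstacles without resolving any of them. You acknowledge this yourself: you write that ``maintaining quasirandomness under an exact edge budget while embedding trees of essentially arbitrary shape'' is ``the hard part,'' that trees with linear-degree vertices ``defeat the random vertex-by-vertex embedding,'' and that ``orchestrating these reserved structures\ldots is the crux.'' A plan that identifies the crux and then stops is not an argument. Concretely, the two gaps are fatal. First, the conjecture allows trees of unbounded (indeed linear) maximum degree, such as stars; no randomised vertex-by-vertex or expansion-based embedding machinery handles such trees, and the ``separate, more deterministic packing'' you gesture toward for the high-degree portions is left entirely unspecified. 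Every known partial result (B\"ottcher--Hladk\'y--Piguet--Taraz, Messuti--R\"odl--Schacht, Ferber--Lee--Mousset, Kim--K\"uhn--Osthus--Tyomkyn, Joos--Kim--K\"uhn--Osthus, and the theorems of the present paper) imposes a maximum-degree restriction for exactly this reason. Second, the packing must use every edge of $K_n$, with zero slack; your absorption scheme is described only at the level of ``reserve gadgets strong enough to correct the leftover simultaneously for all large trees,'' with no construction and no accounting of how the final $n-1$ edges are forced to form $T_n$. These are not details to be filled in later; they are the open problem. If you want to contribute something aligned with this paper, the honest target is one of its actual theorems, such as Theorem~\ref{theorem:main} or Theorem~\ref{theorem:spanning}, which impose degree bounds and work in a random host graph precisely to make the problem tractable.
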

A closely related conjecture had been posed by Ringel in 1963.
\begin{conjecture}[{\cite[Problem~25]{ringel}}]
  \label{ringel}
  For every tree $T$ with $n+1$ vertices, $2n+1$ copies of $T$ pack into $K_{2n+1}$.
\end{conjecture}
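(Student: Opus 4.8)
Since $K_{2n+1}$ has $\binom{2n+1}{2}=n(2n+1)$ edges and $T$ has exactly $n$ edges, packing $2n+1$ copies of $T$ is the same as \emph{decomposing} $E(K_{2n+1})$ into $2n+1$ copies of $T$; in particular there is no slack at the end, so a plain nibble will not suffice and absorption is forced. The plan is to prove the statement for all sufficiently large $n$ by the semi-random/absorption method, set up on the cyclic model of $K_{2n+1}$. Identify $V(K_{2n+1})$ with $\mathbb{Z}_{2n+1}$; then $E(K_{2n+1})$ partitions into the $n$ difference cycles $C_d=\{\{x,x+d\}:x\in\mathbb{Z}_{2n+1}\}$ for $d=1,\dots,n$, and these are cyclically permuted by the rotation $x\mapsto x+1$. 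Hence a single embedding $\phi:V(T)\hookrightarrow\mathbb{Z}_{2n+1}$ whose edges use each difference $d$ exactly once yields a decomposition via its $2n+1$ rotations --- but the existence of such a $\phi$ for every $T$ is precisely the (open) Graceful Tree Conjecture, so instead I would aim for an approximate labelling of this kind and repair it; equivalently, embed almost all the copies greedily and absorb the remainder.

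\textbf{Structural preprocessing of $T$.} Use the classical fact that an $n$-vertex tree either has at least $cn$ leaves or contains a bare path with at least $cn/\log n$ degree-$2$ internal vertices. In either case, since $\sum_v(\deg_T v-1)=n-1$ there are at most $2n/\Delta_0$ vertices of degree exceeding a threshold $\Delta_0=n^{o(1)}$; peel those off together with their neighbourhoods into a small ``core'' $T_0$, leaving a large remainder $T_1$ that is leaf-rich or path-rich and hence highly \emph{flexible}, in the sense that a leaf (resp.\ a bare path) of $T_1$ can be rerouted to almost any currently available difference. The core is embedded first and carefully; the remainder carries the bulk of the $n$ difference classes.

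\textbf{Absorbers, the nibble, and concentration.} Before the main packing, reserve a sparse, carefully chosen family of positions together with a partial assignment forming an absorbing structure, built by distributive absorption: many small gadgets, each able to swallow any one of a few local leftover configurations (this is exactly where the flexibility of $T_1$ is used), linked into a robust network, so that whichever $o(n)$ leftover differences or fragmentary copies survive can be absorbed into a valid decomposition. Then pack the remaining copies by a random greedy process, roughly one difference class at a time, each step extending the placed core by an embedding of $T_1$ chosen near-uniformly among the many valid extensions. The analysis rests on a pseudorandomness invariant --- the unused part of $K_{2n+1}$ keeps all degrees, co-degrees and difference multiplicities within $(1\pm o(1))$ of expectation --- maintained via Freedman-type martingale bounds, plus a counting lemma producing $\exp(\Theta(n\log n))$ admissible embeddings of $T$ at every stage; one finishes by applying the absorber. (One could instead frame the whole problem as finding a perfect matching in the ``$T$-design'' hypergraph on vertex set $E(K_{2n+1})$ and quote a general hypergraph-matching theorem, but the quasirandomness and absorber hypotheses would still have to be verified for $T$ by hand.)

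The main obstacle will be carrying this out \emph{uniformly over all trees} $T$: the star $K_{1,n}$, the path $P_{n+1}$, spiders, double stars and caterpillars behave very differently, and a vertex of $T$ of near-maximal degree is especially dangerous, since it forces one copy to monopolise $\approx n$ edges, straining both the pseudorandomness invariant and the counting lemma and severely constraining what the absorbers can look like. Because the end product must be an exact decomposition, the leftover must be absorbed with certainty; the heart of the proof is therefore to exhibit a single family of absorbing gadgets flexible enough to cope with every tree shape simultaneously, together with the matching embedding-counting estimates in the high-degree regime.
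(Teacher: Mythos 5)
The statement you are addressing is labelled a \emph{conjecture} in the paper, not a theorem; the authors do not prove Ringel's conjecture, they merely quote it as motivation and observe it would be tight if true. So there is no ``paper's own proof'' to compare against. What the paper actually proves are approximate and random-graph analogues (Theorems~\ref{theorem:spanning},~\ref{theorem:almost-spanning},~\ref{theorem:main}), which give edge-disjoint embeddings of $(1-\eps)np/2$ trees into $\Gnp$ under maximum-degree restrictions --- a weakening in three directions at once (not all $2n+1$ copies, not the deterministic $K_{2n+1}$, and with degree caps).

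As for your proposal itself: it is an outline of an absorption/semi-random attack, not a proof. You correctly identify that the problem is an exact decomposition, that the rotational (cyclic) structure of $K_{2n+1}$ is the natural home, and that a graceful labelling would finish it in one stroke --- but you then fall back on a nibble-plus-absorber plan whose crucial components are all left unconstructed. In particular: (1) you never exhibit the absorbing gadgets, and you yourself flag that a single family flexible enough for every tree shape is ``the heart of the proof''; (2) you do not state, let alone prove, a counting lemma for near-uniformly extending a partial embedding of an arbitrary tree, and as you note the high-degree case (e.g.\ $K_{1,n}$) is exactly where both the pseudorandomness invariant and the counting lemma are under the most strain; (3) the claim that $T_1$ is ``leaf-rich or path-rich and hence highly flexible'' is a heuristic, not a lemma --- the dichotomy between many leaves and many bare paths (used in Section~\ref{sec:bare} of this paper and in Montgomery's work) is real, but translating it into an absorber property for decompositions of $K_{2n+1}$ is a substantial piece of work that you do not carry out. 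None of this is a small gap: it is precisely the content that a proof of Ringel's conjecture would need to supply. (For the record, Ringel's conjecture was eventually proved, for large $n$, by Montgomery, Pokrovskiy and Sudakov, by an argument in the same general spirit as your sketch but with the absorbers and counting estimates worked out in full; your outline is a reasonable first paragraph of such a paper, but it is not a proof.)
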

If true, both conjectures would be tight. Indeed, in both cases the hypothetical embedding of the trees would have to use all the edges of the host graph. Several cases of both conjectures have been established, see, e.g., \cite{dobson1,dobson2,dobson3,gyarfas-lehel,hbk,roditty}, but they all assume special structure of the trees. Perhaps the first attempt to resolve Conjecture~\ref{tpc} for arbitrary trees is due to Bollob\'as~\cite{bollobas}, who showed that one can pack the $\lfloor n / \sqrt{2} \rfloor$ smallest trees. He also remarked that this could be improved to $\lfloor \sqrt{3}n/2 \rfloor$, provided that the notoriously difficult Erd\H{o}s--S\'os conjecture on embedding trees in graphs with large average degree is true (a solution of the conjecture was announced by Ajtai, Koml\'os, Simonovits, and Szemer\'edi in the early 1990s). At the other end of the spectrum, Balogh and Palmer~\cite{balogh-palmer} have recently proved that one can pack the $\lfloor n^{1/4}/10 \rfloor$ largest trees into $K_{n+1}$, that is, if one is allowed to use an extra vertex. Moreover, they have shown that if one bounds the maximum degree of the trees by $2n^{2/3}$, then one can pack the $\lfloor n^{1/3}/4 \rfloor$ largest trees into $K_n$. It thus appears that packing large trees is a much harder task than packing small ones. On the other hand, it seems that imposing bounds on the maximum degrees of the trees makes the problem more tractable.

Following this direction, B\"ottcher, Hladk\'y, Piguet, and Taraz~\cite{bhpt} showed that if $T_1, \dotsc, T_N$ are trees with at most $n$ vertices and maximum degree bounded by a constant, then they pack into $K_{\lceil(1+\eps)n\rceil}$, provided that $e(T_1) + \ldots + e(T_N) \le (1-\eps)\binom{n}{2}$. Generalising this result, Messuti, R\"odl, and Schacht~\cite{mrs} proved that the same conclusion holds under the weaker assumption that all the $T_i$ belong to some fixed minor-closed family. Recently, Ferber, Lee, and Mousset~\cite{FLM} improved this result by showing that these graphs can be packed into $K_n$. Even more recently, Kim, K\"uhn, Osthus, and Tyomkyn~\cite{KKOT} extended the result of~\cite{FLM} to arbitrary graphs with bounded maximum degree. These developments imply the following approximate versions of Conjectures~\ref{tpc} and \ref{ringel}.

\begin{corollary}
  For all positive $\eps$ and $\Delta$, if $n$ is sufficiently large, then:
  \begin{enumerate}[label={\rm(\textit{\roman*})}]
  \item
    Every collection $T_{\lceil \eps n \rceil},\dotsc, T_n$ of trees with $v(T_i) = i$ and $\Delta(T_i) \le \Delta$ packs into $K_n$.
  \item
    At least $(2-\varepsilon)n$ copies of each tree $T$ with $v(T) = n+1$ and $\Delta(T) \le \Delta$ pack into $K_{2n+1}$.
  \end{enumerate}
\end{corollary}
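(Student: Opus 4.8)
The corollary is a quick deduction from the packing theorems of Ferber, Lee, and Mousset~\cite{FLM} and of Kim, Kühn, Osthus, and Tyomkyn~\cite{KKOT}. What those results give is the following: for every $\delta>0$ and every $\Delta$ there is an $n_0$ so that, whenever $m\ge n_0$ and $G_1,\dots,G_N$ are graphs each on at most $m$ vertices with $\Delta(G_i)\le\Delta$ and $\sum_i e(G_i)\le(1-\delta)\binom{m}{2}$, the $G_i$ pack into $K_m$. (As all the guests below are trees, the result of~\cite{FLM} already suffices; but one may equally cite~\cite{KKOT}.) In both parts the plan is the same: fix an appropriate slack parameter $\delta$, check that the guests have few enough edges, and apply this theorem.

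For part~(\textit{i}), given $\eps$ I would take $\delta:=\eps^2/2$ and apply the theorem with $m=n$ to the family $T_{\lceil\eps n\rceil},\dots,T_n$. Each of these trees has $v(T_i)=i\le n$ vertices and $\Delta(T_i)\le\Delta$, so the only thing to verify is the edge bound. Since
\[
  \sum_{i=\lceil\eps n\rceil}^{n}(i-1)=\binom{n}{2}-\binom{\lceil\eps n\rceil-1}{2}
\]
and $\binom{\lceil\eps n\rceil-1}{2}=\bigl(\tfrac12+o(1)\bigr)\eps^2n^2$ exceeds $\delta\binom{n}{2}=\bigl(\tfrac14+o(1)\bigr)\eps^2n^2$ once $n$ is large, the sum is at most $(1-\delta)\binom{n}{2}$, and the theorem produces the required packing into $K_n$.

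For part~(\textit{ii}), given $\eps$ I would set $m:=2n+1$, $N:=\lceil(2-\eps)n\rceil$, and $\delta:=\eps/3$, and apply the theorem to $N$ copies of $T$. Each copy has $n+1\le m$ vertices and maximum degree at most $\Delta$, and its edge count is $n$, so the guests have $Nn\le(2-\eps)n^2+n$ edges in total. A one-line estimate gives $\delta(2n^2+n)=\tfrac{\eps}{3}(2n^2+n)\le\eps n^2$ for every $n\ge1$, whence $Nn\le(2n^2+n)-\delta(2n^2+n)=(1-\delta)\binom{2n+1}{2}$, and the theorem packs $N\ge(2-\eps)n$ copies of $T$ into $K_{2n+1}$.

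There is no genuine obstacle here — the argument is entirely bookkeeping. The single point worth flagging is that in part~(\textit{i}) the family $T_{\lceil\eps n\rceil},\dots,T_n$ carries $\bigl(1-\eps^2+o(1)\bigr)\binom{n}{2}$ edges, not $\bigl(1-\eps+o(1)\bigr)\binom{n}{2}$, so one must invoke the packing theorem with a slack parameter of order $\eps^2$ rather than $\eps$; this is still bounded away from $1$, which is all that the theorem requires.
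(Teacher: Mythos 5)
Your proposal is correct and matches the paper's (implicit) derivation: the paper states this corollary as an immediate consequence of the packing theorems of~\cite{FLM} and~\cite{KKOT} without writing out the bookkeeping, and your edge counts and choices of slack parameter ($\delta=\eps^2/2$ in part~(\textit{i}), $\delta=\eps/3$ in part~(\textit{ii})) carry out exactly that deduction. The observation that part~(\textit{i}) needs slack of order $\eps^2$ rather than $\eps$ is the right point to flag.
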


While we were writing these lines, Joos, Kim, K\"uhn, and Osthus \cite{joos} announced a proof of the Tree Packing Conjecture for all bounded degree trees.

In this paper, we strengthen the result of B\"ottcher et al.\ in a somewhat different direction. We address the problem of packing trees with unbounded maximum degree into a random host graph. We work with the usual binomial random graph $\Gnp$, that is, the graph obtained from the complete graph $K_n$ by keeping each edge with probability $p$, independently at random.

Our first result addresses the problem of packing a collection of spanning trees. We show that a.a.s.\ (\emph{asymptotically almost surely}) one can pack into $\Gnp$ a given collection of $n$-vertex trees, provided that the total number of edges of these trees does not exceed $(1-\eps)$-proportion of the (expected) number of edges of the host graph and the maximum degree of each tree in the collection is bounded by a small power of the (expected) average degree of the host graph.

\begin{theorem}
  \label{theorem:spanning}
  Let $\eps$ be a positive constant and suppose that $p \ge (\log n)^{36} / n$ and $N \le (1-\eps)np/2$. If $T_1, \dotsc, T_N$ are $n$-vertex trees with maximum degree at most $(np)^{1/6} / (\log n)^6$, then a.a.s.\ $T_1,\dotsc,T_N$ pack into $\Gnp$.
\end{theorem}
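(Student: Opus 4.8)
The plan is to embed the trees one at a time, in the order $T_1, \dots, T_N$, using a \emph{randomised} embedding procedure, while maintaining throughout the process that the graph of not-yet-used edges stays ``pseudorandom'' in a suitable quantitative sense. Since $N \le (1-\eps)np/2$ and each $T_i$ has $n-1$ edges, we consume at most a $(1-\eps)$-fraction of the edges of $\Gnp$, so there is room for the leftover graph to remain fairly dense. Before starting, I would decompose $\Gnp$ into edge-disjoint random graphs $\Gnpp$ and $\Gnq$ with $p' = (1-\gamma)p$ and $q$ of order $\gamma p$, for a small constant $\gamma \ll \eps$: the main embedding takes place in $\Gnpp$, while the sparse graph $\Gnq$ is reserved to \emph{complete} the embeddings, i.e.\ to take care of the last $o(n)$ vertices of each tree, which is the delicate ``spanning'' part of the argument.

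Next I would collect the pseudorandom properties that are needed and, crucially, their robustness under edge deletion: a.a.s.\ $\Gnpp$ has near-regular degrees $(1 \pm o(1))np$, good edge distribution between all pairs of not-too-small vertex sets, and expansion of small sets, and all of these survive the deletion of any edge set that removes at most $(\eps/4)np$ edges at every vertex and keeps the remaining used edges well spread out. This is where the randomisation is essential: embedding each tree randomly means the edges it consumes are well distributed, so a Chernoff-plus-union-bound argument shows that after embedding any prefix $T_1, \dots, T_i$ every vertex has lost at most about $(1-\eps/2)np$ incident edges and the images used within the current tree remain spread out, so every vertex always retains many available neighbours. Proving that the randomised embeddings genuinely spread out their edges and images, and that the relevant quantities concentrate over all $N = \Theta(np)$ rounds, is the main technical obstacle; it is also the source of the quantitative hypotheses $p \ge (\log n)^{36}/n$ (so that failure probabilities $e^{-\Omega(np)}$ beat the polynomial union bounds) and $\Delta(T_i) \le (np)^{1/6}/(\log n)^6$ (so that at each step the number of candidate images for the children of a vertex exceeds the number $\le \Delta(T_i)$ of children by a wide margin, with enough slack for the concentration estimates).

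Finally, the single-tree embedding. Given a target tree $T$ with $\Delta(T) \le (np)^{1/6}/(\log n)^6$ and the current pseudorandom leftover graph $H$, I would use the standard structural dichotomy for bounded-degree trees --- $T$ has either linearly many leaves or a large family of long bare (internally degree-$2$) paths --- to designate a set of $\delta n$ vertices of $T$, either as leaves pendant to the rest of the tree or as interiors of bare paths, so that what remains is embeddable with slack (it spans at most $(1-\delta)n$ vertices, possibly split into a few subtrees of that total size). Embed this remainder into $H$ by processing its vertices in BFS order from a root and placing the children of an already-embedded vertex $u$ at images chosen uniformly at random among the currently available neighbours of $\phi(u)$ (unused edges; images not yet used by $T$); near-regularity and edge distribution guarantee far more available choices than children, so this step succeeds. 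Then complete the embedding by attaching the pendant leaves, respectively routing the missing bare-path segments between their already-embedded endpoints, onto the so-far-unused vertices of $V(\Gnp)$, using a Hall-type matching together with connecting-type expansion arguments inside the reserved graph $\Gnq$, whose expansion makes every unused vertex a valid image for many of the missing vertices and lets the long paths be routed flexibly. A union bound over $T_1, \dots, T_N$, together with the invariant from the previous step, then shows that with probability $1 - o(1)$ all $N$ embeddings succeed and are pairwise edge-disjoint, completing the proof.
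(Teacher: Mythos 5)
Your overall architecture is the same as the paper's: split $\Gnp$ into a main part $\Gnpp$ and a reserve $\Gnq$, pack almost-spanning pieces of the trees into $\Gnpp$ by a randomised sequential (``online sprinkling'') embedding whose near-uniformity controls the degrees of the used graph, and finish each tree on its leftover vertices inside $\Gnq$ using the leaves-versus-bare-paths dichotomy (Hall matching, respectively Montgomery's connecting lemma). So the route is right. But there is one genuine gap, at the completion stage, and it is precisely the point where the paper has to work hardest.

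You treat the $N$ completions as $N$ separate events inside the single reserved graph $\Gnq$ and finish with a union bound. This does not yet give edge-disjointness of the completions, nor does it supply fresh randomness for the $s$th completion after conditioning on the first $s-1$: two different trees may want the same edge of $\Gnq$ between their leftover sets, and after the early completions the conditional distribution of the unexposed part of $\Gnq$ is no longer a clean binomial random graph on the pairs you need. The paper's fix is to give each tree its own \emph{independent} random graph $G^s_{q_s}$ on its leftover set $W_s$, with $q_s \approx \Delta(\log n)^5/|W_s|$ (the threshold for Theorem~\ref{theorem:Montgomery}), and to couple the union of these graphs below $\Gnq$. That coupling works only if no pair $\{x,y\}$ satisfies $\sum_s \indicator[\{x,y\}\subseteq W_s]\, q_s > q$, i.e.\ only if the leftover sets $W_s$ are near-uniformly spread over pairs of vertices. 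This is a nontrivial \emph{output of the main-stage randomised embedding} (Remark~\ref{remark:Ui-distribution}, proved via Lemma~\ref{lemma:uniform-dist for garbage}), not something a generic ``Chernoff plus union bound on degrees'' invariant gives you; your proposal never states or uses such a property of the leftover sets. Two quantitative consequences you should also confront: the leftover fraction $\delta$ cannot be a constant --- the edge budget forces roughly $\delta \lesssim 1/(\Delta(\log n)^5)$, which is why the paper first splits each $T_s$ into a $(1-\alpha)n$-vertex subtree and a small subtree on $\Theta(\alpha n)$ vertices with $\alpha = \eps/(8\Delta(\log n)^5)$ --- and the hypothesis $\Delta \le (np)^{1/6}/(\log n)^6$ is binding in this completion stage (through the required per-tree edge probability $\Delta(\log n)^5/|W_s|$ and the minimum-degree condition of Theorem~\ref{theorem:Montgomery} on the leftover graph, combined with a colour-splitting step to reduce to $p \le n^{-2/3}$), not merely through the ``more candidates than children'' slack in the main stage, which only needs $\Delta \lesssim np/\log n$ up to the $\alpha$-dependence.
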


Note that unlike in some of the previously mentioned results, in Theorem~\ref{theorem:spanning} we assume that all the trees have the same size. Even though it might seem somewhat restrictive, one may always ``cut and paste'' the trees together. For example, in the setting of Conjecture~\ref{tpc}, one may merge $T_{i+1}$ with $T_{n-i}$ by identifying two arbitrarily chosen leaves to obtain a tree with $n$ vertices, whose maximum degree does not exceed $\max\{\Delta(T_{i+1}), \Delta(T_{n-i})\}$. Therefore, it seems natural to determine conditions guaranteeing that a large collection of $n$-vertex trees packs into $K_n$, or perhaps into $K_{(1+\eps)n}$. In our setting, as the host graph is $\Gnp$, which typically has about $\binom{n}{2}p$ edges, one cannot expect to pack more than $np/2$ spanning trees. Moreover, as $\Gnp$ is a.a.s.\ not connected unless $p \ge (\log n + \omega(1))/n$, one can see that our Theorem~\ref{theorem:spanning} is approximately optimal with respect to both the number of trees~$N$ (up to a $1-o(1)$ multiplicative factor) and the edge probability~$p$ (up to a polylogarithmic factor).

Our second result addresses the problem of packing a collection of almost spanning trees. In this case, we can pack trees with much larger maximum degrees.

\begin{theorem}
  \label{theorem:almost-spanning}
  Let $\eps$ be a positive constant and suppose that $p \gg (\log n)^2/n$ and $N \le (1-\eps)np/2$. If $T_1, \dotsc, T_N$ are trees, each of which has at most $(1-\eps)n$ vertices and maximum degree at most $(\eps/8)^8 np/\log n$, then a.a.s.\ $T_1, \dotsc, T_N$ pack into $\Gnp$.
\end{theorem}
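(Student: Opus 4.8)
\emph{Proof strategy.} The plan is to construct the $N$ edge-disjoint copies \emph{one tree at a time}, in the order $T_1,\dotsc,T_N$, maintaining throughout that the current \emph{residual graph} $G_k := \Gnp - (E(H_1)\cup\dotsb\cup E(H_{k-1}))$ --- the subgraph of $\Gnp$ consisting of the edges not yet used --- remains ``sufficiently pseudorandom'': every vertex has residual degree at least $(1-\eps/2)np$, and the number of edges between any pair of not-too-small vertex sets is close to the value predicted by pseudorandomness. We first condition on the a.a.s.\ event that $\Gnp$ itself has the standard (Chernoff plus union bound) pseudorandomness we need, for which $p \gg (\log n)^2/n$ leaves comfortable room. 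The two kinds of slack in the hypotheses play complementary roles: since $v(T_i)\le(1-\eps)n$, during its embedding there are always at least $\eps n$ host vertices outside the current partial image, so the edge-distribution of $G_k$ always supplies $\Omega(\eps^2np)$ usable neighbours --- far more than $\Delta(T_i)$ --- at whatever vertex we are extending from; and since $N\le(1-\eps)np/2$, the number of edges we ever use at a fixed host vertex has expectation at most $(1-\eps)np$, leaving a window of width $\Omega(\eps np)$ for fluctuations.

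To embed one tree $T=T_k$ into $G=G_k$ we run a \emph{randomized breadth-first embedding}: root $T$ arbitrarily, let $L_0,L_1,\dotsc$ be its BFS layers, place $L_0$ on a uniformly random vertex, and having embedded $L_0\cup\dotsb\cup L_{i-1}$, embed $L_i$ by choosing, for the image $w$ of each $u\in L_{i-1}$, a set of $\deg^+_T(u)$ distinct host vertices not yet used by $T$ and joined to $w$ by edges of $\Gnp$ not yet used by $H_1,\dotsc,H_{k-1}$, all these sets being disjoint. Because $\Delta(T)\le(\eps/8)^8np/\log n$ sits below every threshold in the pseudorandom estimates and $G_k$ is pseudorandom, such a family of sets exists with Hall's condition satisfied comfortably, and we pick one uniformly at random. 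The purpose of the randomization is a \emph{spreading} property: level by level, using the edge-discrepancy of $G_k$, one shows that the distribution of $\phi_k(v)$ stays within a constant factor of uniform, i.e.\ $\Pr[\phi_k(v)=x\mid\text{history}]\le C(\eps)/n$ for every host vertex $x$ and every $v\in V(T_k)$; the content here is that the operator ``replace the current image-distribution by that of a random unused neighbour'' \emph{contracts} towards the uniform distribution rather than amplifying irregularities, which is a consequence of the pseudorandom edge-distribution.

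The spreading property drives all the bookkeeping. The number of edges $H_k$ places at a fixed host vertex $x$ is $\sum_v \deg_{T_k}(v)\,\indicator[\phi_k(v)=x]$, whose conditional expectation is $O_\eps(v(T_k)/n)=O_\eps(1)$, so summed over $k$ the expected residual degree of $x$ stays above $(1-\eps)np-o(np)$; running the process as a martingale in the trees and invoking Freedman's inequality --- the one-step change at $x$ is at most $\Delta(T_k)=O(np/\log n)$, and, again by spreading, the predictable quadratic variation is $O_\eps(\sum_k\Delta(T_k))=O_\eps\!\big((np)^2/\log n\big)$ --- shows that $x$'s residual degree remains above $(1-\eps/2)np$ with probability $1-n^{-\omega(1)}$; a union bound over the $n$ vertices, the $N$ trees, and the (polynomially many) edge-discrepancy statistics, each controlled in the same way, keeps $G_k$ pseudorandom throughout. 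The same spreading, together with the fact that a tree on at most $n$ vertices has fewer than $2n/t$ vertices of degree at least $t$, shows that the images of high-degree tree vertices cannot pile up on any single host vertex, so no vertex is ever ``overloaded''; should any invariant nonetheless be about to fail we simply stop, and the concentration estimates show a.a.s.\ we never stop before all $N$ trees are placed.

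The main obstacle is precisely this intertwining. The randomized breadth-first embedding can be run, and is spread, only while $G_k$ is still pseudorandom and retains most of its edges; yet that is exactly the property the embeddings are supposed to preserve --- and $G_k$ is \emph{not} a random graph, since the edges deleted so far are a function of $\Gnp$ itself. I would break the circularity with a stopping time $\tau$, defined as the first moment at which one of the maintained invariants fails, carry out every concentration argument on the truncated process, and show $\Pr[\tau\le N]=o(1)$. Within this scheme the two delicate points are (i) establishing the spreading property with a constant $C(\eps)$ that does \emph{not} degrade as the BFS layers accumulate --- which genuinely needs the edge-discrepancy of $G_k$, not just its degree bounds --- and (ii) verifying that every Freedman-type bound beats its union bound, which is what forces the hypotheses $p\gg(\log n)^2/n$ and $\Delta(T_i)\le(\eps/8)^8np/\log n$.
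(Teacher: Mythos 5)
Your plan --- expose $\Gnp$ first, then embed the trees one at a time by a randomised BFS into the residual graph $G_k$, closing the loop with a spreading property, martingale concentration, and a stopping time --- is structurally different from the paper's, and the difference sits exactly at the point you yourself flag as delicate point (i). In your scheme the candidate set for the children of a vertex embedded at $w$ is $N_{G_k}(w)$ intersected with the vertices unused by $T_k$: a set of size only $\Theta(\eps np)$, whose composition is an adversarially-looking (history-dependent) subset of the residual neighbourhood. Two problems follow. First, even lower-bounding its size is not deterministic: $|N_{G_k}(w)\cap S|\ge |N_{\Gnp}(w)\cap S| - |\mathrm{used}(w)|\ge \eps np(1-o(1)) - (1-\eps)np$, which is negative for $\eps<1/2$, so you must additionally show that the up to $(1-\eps)np$ edges already consumed at $w$ do not concentrate on the (random, adaptively chosen) unused set $S$ --- yet another spreading statement about the cumulative history, not covered by ``edge-discrepancy between pairs of sets''. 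Second, and more seriously, the one-step bound you can extract is $\Pr[\phi_k(v_t)=x]\le \frac{C_{t-1}}{n}\cdot\frac{\deg_{G_k}(x)}{\min_w|N_{G_k}(w)\cap S|}$, and the ratio on the right is a constant bounded away from $1$ (roughly $1/\eps$), because the candidate set is only an $\eps$-fraction of the residual neighbourhood; the discrepancy of $\Gnp$ only controls the relative error up to $\Theta(\sqrt{\log n/(np)})$ per level in any case. A tree with maximum degree $2$ and $(1-\eps)n$ vertices (a near-path) has depth $\Theta(n)$, so either error compounds catastrophically over the levels, and no depth-independent $C(\eps)$ comes out of discrepancy alone. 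Beating the worst case requires controlling the joint distribution of the parent's image and its candidate set --- this is the missing idea, and it is precisely what the paper's \emph{online sprinkling} supplies: the random graph is generated during the embedding, so the candidate set at each step is essentially the whole set of $\ge\alpha n$ unused host vertices minus at most $2np$ forbidden ones, the main per-step factors $1-d_j/(n-\ch{j})$ telescope \emph{exactly} as in~\eqref{eq:prod-Pj}, and only an $O(np/(\alpha n))$ total correction accumulates over the entire tree (Lemma~\ref{lemma:uniform-dist}).

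Two further quantitative points. Your invariant ``every residual degree stays above $(1-\eps/2)np$'' cannot be maintained: packing $N\le(1-\eps)np/2$ trees consumes about $(1-\eps)np$ edges at a typical vertex, so $\eps np$ is the right order for the terminal residual degree. Relatedly, the spreading constant must be $1+O(\eps)$ rather than merely $O_\eps(1)$: the expected total consumption at a vertex is $C(\eps)\cdot 2e(T_k)/n$ summed over $k$, i.e.\ about $C(\eps)(1-\eps)np$, and this must stay below $np$; the paper's $e^{\delta}\le 1+\eps/2$ is calibrated for exactly this reason. Finally, for the record, the paper proves the stated theorem by a short colour-splitting reduction (to force $p\le\eps^5/2^{12}$) followed by an application of Theorem~\ref{theorem:main} with $\alpha=\eps/2$; all of the substance is in the online-sprinkling analysis described above, which your proposal would need to replace rather than merely postpone.
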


At the heart of the proof of both Theorems~\ref{theorem:spanning} and~\ref{theorem:almost-spanning} lies the following technical generalisation of the latter, which is the main result of this paper. While the derivation of Theorem~\ref{theorem:almost-spanning} from this result is fairly straightforward, our proof of Theorem~\ref{theorem:spanning} require several additional ingredients. Therefore, we postpone both arguments to Section~\ref{sec:derivation}.

\begin{theorem}
  \label{theorem:main}
  Suppose that $\alpha, \eps, p \in (0,1/2)$ and integers $\Delta$ and $n$ satisfy
  \begin{equation}
    \label{eq:main-assumptions}
    \frac{150 (\log n)^2}{\alpha \eps n} \le p \le \frac{\eps \alpha^4}{128} \quad \text{and} \quad \Delta \le \min\left\{ \alpha, \frac{\eps}{\log (1/\alpha)} \right\} \cdot \frac{\eps np}{1600\log n}.
  \end{equation}
  Suppose that $T_1,\dotsc, T_N$, where $N \le (1-\eps)np/2$, is a collection of trees, each of which has at most $(1-\alpha)n$ vertices and maximum degree at most $\Delta$. Then with probability at least $1-n^{-7}$, the trees $T_1, \dotsc, T_N$ pack into $\Gnp$.
\end{theorem}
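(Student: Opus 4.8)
The plan is to embed the trees $T_1, \dotsc, T_N$ one at a time, always into the graph of still-available edges. Write $\Gamma_1 := \Gnp$ and, once $T_1, \dotsc, T_{i-1}$ have been embedded as edge-disjoint copies $\phi_1(T_1), \dotsc, \phi_{i-1}(T_{i-1})$, let $\Gamma_i := \Gnp \setminus \bigcup_{j<i}\phi_j(E(T_j))$ and embed $T_i$ into $\Gamma_i$. Since $\sum_j e(T_j) \le Nn \le (1-\eps)np\cdot n/2$, by the last step as much as a $(1-\eps)$-fraction of the edges of $\Gnp$ may have been spent, so the real danger is that an ill-chosen sequence of embeddings destroys, say, all connections between two sets of vertices. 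The way around this is to insist that every $\phi_j$ be \emph{balanced}: the copy $\phi_j(T_j)$ should use edges spread uniformly across $\Gnp$, so that $\bigcup_{j<i}\phi_j(E(T_j))$ looks, at every vertex and between every pair of large vertex sets, like a random subgraph of $\Gnp$ of density at most $(1-\eps)p$. Thus two ingredients are needed: (a) an embedding lemma guaranteeing that a single almost-spanning tree of maximum degree at most $\Delta$ embeds into any sufficiently $\Gnp$-like graph, and moreover embeds in a balanced way; and (b) a stability statement showing that, under balanced embeddings, every $\Gamma_i$ stays $\Gnp$-like.

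\textbf{Step 1 (pseudorandomness).} Using $p \ge 150(\log n)^2/(\alpha\eps n)$, Chernoff's bound and a union bound show that with probability at least $1-n^{-8}$ the graph $\Gnp$ is \emph{typical}: every vertex has degree $(1\pm n^{-1/3})np$; every pair of disjoint sets $A,B$ with $|A|,|B|\ge m_0 := \lceil\sqrt{3n/p}\,\rceil$ satisfies $e(A,B)=(1\pm n^{-1/3})|A||B|p$; and every set $S$ with $|S| \le \log n/p$ spans at most $3|S|\log n$ edges. Note $m_0 \le n/(7\log n)$ by the lower bound on $p$. Condition on $\Gnp$ being typical henceforth.

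\textbf{Step 2 (balanced embedding into $\Gnp$-like graphs).} Call a graph $\Gamma$ on $[n]$ \emph{good} if all but at most $\alpha n/4$ of its vertices have degree at least $\eps np/4$, if any two disjoint sets of size at least $m_0$ are joined by an edge, and if it inherits the local-sparsity bound of Step~1. Prove: a good graph $\Gamma$ contains a copy of every tree $T$ with $v(T)\le(1-\alpha)n$ and $\Delta(T)\le\Delta$; moreover, there is a randomized procedure that, run on any good $\Gamma$, outputs such a copy with probability at least $1-n^{-9}$ and whose output $\phi$ is balanced, in the sense that for every vertex $x$ the conditional expectation of $\deg_T(\phi^{-1}(x))$ is at most $3e(T)/n$ (and $\deg_T(\phi^{-1}(x))\le\Delta$ always), while for every pair of disjoint sets $A,B$ of size at least $m_0$ the conditional expectation of $e_{\phi(T)}(A,B)$ is at most $3e(T)|A||B|/n^2$. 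The procedure reserves at the outset a random set of about $\alpha n/2$ ``fresh'' vertices, so that throughout the embedding every already-placed vertex keeps $\gg\Delta$ unused neighbours; it maps the few high-degree vertices of $T$ (there are at most $2n/d$ of degree at least $d$, since $\sum_v\deg_T(v)\le 2n$) to fresh vertices; and it processes $V(T)$ in a breadth-first order, each time choosing the image of the next vertex essentially uniformly at random among the admissible choices, using set-expansion at scale $m_0$ to stay unstuck when the unplaced part is large and the fresh reservoir once it is small. Balancedness follows from this near-uniformity together with typicality.

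\textbf{Step 3 (maintaining goodness; iteration).} Assume inductively that $\Gamma_1,\dotsc,\Gamma_i$ are good and $T_1,\dotsc,T_i$ were embedded by the procedure of Step~2. Fix a vertex $x$. The number of edges of $\bigcup_{j\le i}\phi_j(E(T_j))$ at $x$ equals $\sum_{j\le i}\deg_{T_j}(\phi_j^{-1}(x))$, a sum of at most $N$ terms, each bounded by $\Delta$ and of conditional mean at most $3e(T_j)/n<3$; its total mean is therefore at most $3N< 2(1-\eps)np$. Freedman's inequality for martingales bounds the probability that this sum exceeds $3N+t$ by $\exp(-t^2/(4\Delta N+2\Delta t))$, and taking $t$ to be a suitable small constant multiple of $\eps np$ and feeding in the bounds $\Delta \le \frac{\eps np}{1600\log n}$ and $N \le \frac{np}{2}$, together with the factor $\frac{\eps}{\log(1/\alpha)}$ in~\eqref{eq:main-assumptions}, makes this at most $n^{-10}$. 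An analogous computation, union-bounded over the at most $4^n$ pairs of sets $A,B$ of size at least $m_0$, bounds $e_{\bigcup_{j\le i}\phi_j(E(T_j))}(A,B)$ by $(1-\eps/2)|A||B|p$ with probability at least $1-n^{-10}$; this is affordable because $m_0^2 p \ge 3n$, so that typicality makes $e_{\Gnp}(A,B)$ super-linear in $n$ and the deviations to be controlled are likewise super-linear. Combining these with typicality shows $\Gamma_{i+1}$ is good: all but $\le\alpha n/4$ vertices keep degree at least $\eps np/4$, and $e_{\Gamma_{i+1}}(A,B)\ge|A||B|p-(1-\eps/2)|A||B|p>0$ for all $A,B$ of size at least $m_0$. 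Hence Step~2 applies to $T_{i+1}$, and a union bound over the $N< n$ steps, the $n$ vertices, the $4^n$ pairs and the $n^{-9}$ failure probability of each embedding shows that the whole packing is found with probability at least $1-n^{-7}$.

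\textbf{Main obstacle.} The technical heart is Step~2: embedding an \emph{almost-spanning} tree whose maximum degree may be nearly $np/\log n$ into a merely pseudorandom (and already partially depleted) graph that need not have every individual vertex well-distributed, and doing so with enough distributional control on the image for Step~3 to work — in particular keeping the handful of high-degree tree vertices from ever piling up on the same host vertex across all $N$ embeddings. This is exactly what dictates the shape of the degree hypothesis in~\eqref{eq:main-assumptions}: the $1/\log n$ is the cost of demanding concentration over $n$ successive steps, the factor $\alpha$ is the cost of keeping a pool of fresh vertices available throughout each embedding, and the factor $\eps/\log(1/\alpha)$ is what Freedman's inequality needs in order to hold the accumulated per-vertex load below $(1-\eps/2)np$.
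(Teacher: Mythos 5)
Your route is genuinely different from the paper's, and the difference matters: you first condition on a typical instance of $\Gnp$ and then try to embed each $T_i$ into the \emph{fixed} leftover graph $\Gamma_i$ by a randomized procedure, whereas the paper couples the two sources of randomness, revealing $\Gnp$'s edges only as the embedding needs them (``online sprinkling''). This coupling is not a cosmetic choice: it lets the paper prove the key balancedness estimate (its Lemma~\ref{lemma:uniform-dist}, showing $\Pr\big(E^s_{i,u,w}\wedge\DD_{s-1}\mid\FF_s\big)\le (n-\ch i)e^\delta/n^2$) with essentially no structural hypotheses on the partial packing beyond a degree bound, because every clock that has not yet rung is, conditionally, a fresh uniform $[0,1]$ random variable on the remaining interval. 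In your decoupled scheme, all of $\Gnp$'s randomness is spent in Step~1, and the burden of producing a near-uniformly distributed embedding falls entirely on the embedding procedure's own coin flips, run against an \emph{adversarially} pseudorandom, partially depleted host graph.

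That is exactly where the gap is. Step~2 is not a routine lemma: it asserts a deterministic statement of the form ``for \emph{every} graph $\Gamma$ satisfying the listed good-ness conditions, there is a randomized procedure that embeds any tree with at most $(1-\alpha)n$ vertices and maximum degree up to $\Delta\approx\eps np/\log n$, succeeds with probability $1-n^{-9}$, and produces an image whose degree at each $x$ has conditional mean $\le 3e(T)/n$.'' Nothing in the sketch establishes this. The good-ness conditions you list (degrees $\ge\eps np/4$ for all but $\alpha n/4$ vertices, an edge between any two sets of size $m_0$, and local sparsity) are much weaker than what known tree-embedding machinery requires. In particular, the BFS-with-uniform-choices procedure can get stuck: when you come to embed the children of $v_i$, you need $d_i$ \emph{unused} $\Gamma$-neighbours of $\varphi(v_i)$, and the set-expansion at scale $m_0=\Theta(\sqrt{n/p})$ gives you nothing if the unused neighbours number fewer than $m_0$ (which is up to $n/\log n$, far above $\Delta$). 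A ``fresh reservoir'' of size $\alpha n/2$ does not solve this, because you cannot route the bulk of the tree into the reservoir, and outside the reservoir the conditional distribution of a next image need not be anywhere near uniform on $V$. Getting the conditional mean down to $3e(T)/n$ for every $x$ in every good $\Gamma$ is a claim at least as strong as the theorem itself — indeed the paper goes to considerable lengths (the clock construction, Lemma~\ref{lemma:ex-clock-running-time}, the inductive Claim~\ref{claim:Ex-ZkI-bound}) precisely to avoid having to prove it in a conditioned environment.

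Concretely, then: Step~1 and the Freedman-type computation in Step~3 are fine and consistent with the bound $\Delta\lesssim\eps^2 np/\log n$; the problem is that Step~2 is asserted (``Prove: \dots'') rather than proved, and the sketch supplied (fresh reservoir, high-degree tree vertices sent there, BFS with uniform admissible choices) does not yield either the success probability or the balancedness claimed. Until Step~2 is established — and for that you would either need a substantially stronger good-ness hypothesis (for instance, near-complete induced subgraphs as in the two-stage proof of Theorem~\ref{theorem:spanning}), or some way of retaining usable $\Gnp$-randomness across rounds, which is what online sprinkling buys — the argument is incomplete at its central step.
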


\begin{remark}
  \label{remark:Ui-distribution}
  In the proof of Theorem~\ref{theorem:spanning}, we shall need the following additional property of the packing whose existence is guaranteed by Theorem~\ref{theorem:main}, which we establish in the course of its proof. Denote the edge-disjoint embeddings of $T_1, \dotsc, T_N$ by $\varphi_1, \dotsc, \varphi_N$, respectively. For each $s \in [N]$, let $v_s$ be an arbitrary vertex of $T_s$ and let $W_s$ be the set of vertices of $\Gnp$ not covered by $\varphi_s(T_s)$ plus the vertex $\varphi_s(v_s)$. If we additionally assume that $p \ge 30\log n / (\alpha^2 n)$, then with probability at least $1 - n^{-7}$, for every pair of distinct vertices $u$ and $w$,
  \[
  \sum_{s = 1}^N \indicator[\{u, w\} \subseteq W_s] \cdot \frac{1}{|W_s|} \le \frac{2p}{n} \cdot \max_s |W_s|.
  \]
\end{remark}

The proof of Theorem~\ref{theorem:main} utilises and extends the ``online sprinkling" technique introduced by the first author and Vu~\cite{FV}. Roughly speaking, we embed our trees and expose $\Gnp$ together, edge by edge, making sure that the trees are embedded disjointly and each discovered edge of $\Gnp$ is used in the embedding.

The proof of Theorem~\ref{theorem:spanning} relies heavily on ideas from and elegant short paper of Krivelevich~\cite{Kriv} and a beautiful recent paper of Montgomery~\cite{montgomery2014embedding}, which essentially resolves the problem of embedding a given $n$-vertex tree into $\Gnp$. We shall actually require a slight generalisation of the main results of~\cite{Kriv,montgomery2014embedding}, Theorem~\ref{theorem:Montgomery} below. It determines a sufficient condition for the edge probability that ensures that with high probability a given $n$-vertex tree can be embedded into a random subgraph of an $n$-vertex graph that is almost complete. Even though one may derive Theorem~\ref{theorem:Montgomery} by carefully following the arguments of~\cite{Kriv,montgomery2014embedding}, with some obvious modifications, we shall do it in full detail in Section~\ref{sec:embedding-spanning-trees} for the convenience of the reader. Following standard practice, given a graph $G$ and a $p \in [0,1]$, we shall denote by $G_p$ the random subgraph of $G$ obtained by keeping each edge with probability $p$, independently of other edges, which we shall refer to as the \emph{$p$-random subgraph of $G$}.

\begin{theorem}
  \label{theorem:Montgomery}
  Let $T$ be a tree with $n$ vertices and maximum degree $\Delta$ and let $v$ be an arbitrary vertex of $T$. Let $G$ be an $n$-vertex graph with $\delta(G)\ge n-n/(\Delta(\log n)^5)$ and let $x \in V(G)$. If $p \ge \Delta(\log n)^5/n$ and $n \ge n_0$ for some absolute constant $n_0$, then with probability at least $1-n^{-3}$, there is an embedding of $T$ into $G_p$ that maps $v$ to $x$.
\end{theorem}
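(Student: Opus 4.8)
The plan is to follow the absorption-based strategy of Montgomery~\cite{montgomery2014embedding}, adapted to the present setting where the host graph is a $p$-random subgraph of an almost-complete graph rather than $\Gnp$ itself. The first step is to decompose the tree $T$. Using the standard fact that every $n$-vertex tree contains roughly $n/(4\Delta)$ leaves or a long bare path, and iterating, one finds a set of $\Theta(n/\log n)$ vertex-disjoint bare paths of length (say) $10$ in $T$, together covering a positive fraction of the vertices; removing their interiors leaves a subforest $T'$ with $n' = (1-\beta)n$ vertices for some small constant $\beta$, to which the removed path-interiors are attached at prescribed locations. Equivalently, $T$ is obtained from $T'$ by subdividing a linear-sized matching-like set of edges. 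The idea is to first embed $T'$ greedily and leave the bare paths to the end, using the flexibility of where their endpoints land to absorb leftover vertices of $G$.

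Second, I would embed $T'$ vertex by vertex in a breadth-first order, maintaining the invariant that at each step the set $U$ of currently unused vertices of $G$ is still ``well-connected'': every vertex of $G$ has at least (say) $2|U|/3$ neighbours in $U$ within $G_p$. This follows because $\delta(G) \ge n - n/(\Delta (\log n)^5)$ means each vertex misses at most $n/(\Delta(\log n)^5)$ vertices in $G$, and since $p \ge \Delta (\log n)^5 / n$, a Chernoff bound shows that a.a.s.\ every vertex $v$ and every set $S$ of size at least $n/\log^2 n$ satisfies $|N_{G_p}(v) \cap S| \ge |S|p/2 \gg \Delta$; we only ever need to extend the embedding into sets of unused vertices of size $\ge \beta n / 2 \gg n/\log^2 n$ until the very last stage. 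To embed the next vertex $w$ of $T'$, whose parent is already placed at some $x' \in V(G)$, we need a common $G_p$-neighbour of $x'$ avoiding the $\le n$ already-used vertices and the $\le \Delta$ siblings of $w$ placed so far; the connectivity invariant guarantees many such choices. Finally, for the bare paths: each path of length $10$ with prescribed endpoints $a, b$ already embedded must be routed through $10-1$ of the remaining unused vertices. Since the leftover set has size $\beta n$ and is well-connected even down to this size, and since there are only $O(n/\log n)$ paths to route, one can process them one at a time, each time finding a short path between the two prescribed images through fresh vertices; a counting/expansion argument (or a direct application of a connectivity lemma, as in~\cite{montgomery2014embedding}) shows such a path exists, and by choosing the order carefully one uses up exactly the leftover vertices.

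The technically delicate step—and the main obstacle—is the last one: ensuring that the bare paths can be completed so that \emph{all} of $U$ gets covered, i.e.\ the embedding is genuinely spanning. This is exactly where Montgomery's absorbing argument is needed: one cannot simply route the paths greedily, because after routing all but a few one might be left with a small set of unused vertices that the remaining paths cannot absorb. The fix is to set aside in advance a small ``absorbing structure''—a collection of path-segments together with a flexible bipartite-like gadget on a reservoir set $R$ of size $\Theta(n/\log n)$—with the property that for any subset $R' \subseteq R$ of the appropriate size, the reserved paths can be re-routed to cover exactly $R'$. One first embeds $T'$ and all non-absorbing paths into $V(G) \setminus R$ plus a controlled part of $R$, leaving an arbitrary leftover set contained in $R$, and then invokes the absorption property. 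Verifying that such an absorbing gadget exists inside $G_p$ requires the same expansion estimates as above together with a Hall-type / matching argument; this is where following~\cite{Kriv,montgomery2014embedding} ``with obvious modifications'' hides the real work, the only genuine change being that neighbourhoods are taken in $G_p$ rather than $\Gnp$ and one must everywhere pay the factor-$2$ loss coming from $\delta(G) < n$, which is absorbed by the slack in the hypotheses $p \ge \Delta(\log n)^5/n$ and $\delta(G) \ge n - n/(\Delta(\log n)^5)$. A union bound over the $O(n/\log n)$ embedding steps and the polynomially many ``bad events'' for the expansion properties keeps the total failure probability below $n^{-3}$.
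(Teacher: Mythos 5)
Your overall architecture (greedily embed a reduced tree in BFS order, then complete it by routing the removed bare paths so as to cover the leftover vertices exactly, using Montgomery's connecting/absorbing machinery) matches the paper's treatment of one of its two cases, but the proposal has two concrete gaps.

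First, the decomposition step fails for trees with many leaves. The correct form of the ``standard fact'' (Krivelevich's lemma, Corollary~\ref{cor:many-bare}) is a genuine dichotomy: a tree has either at least $n/(4k)$ leaves or at least $n/(4k)$ disjoint bare paths of length $k$; it need not contain any long bare path at all (consider a spider or a balanced binary tree). So you cannot always ``find a set of $\Theta(n/\log n)$ vertex-disjoint bare paths,'' and the many-leaves case must be handled by a different argument. The paper does this in Theorem~\ref{theorem:many-leaves}: remove $n/(2(\log n)^3)$ leaves, embed the remaining tree greedily, and then place the removed leaves on the leftover vertices via a generalised matching, verified through Hall's condition in a blown-up random bipartite graph.

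Second, bare paths of length $10$ cannot be routed at the density in question. With $p$ as small as $\Theta(\Delta(\log n)^5/n)$, the expected number of paths of constant length $k$ between two prescribed vertices through a prescribed linear-sized set is $O(n^{k-1}p^{k})=o(1)$, so with high probability no such path exists, let alone $\Theta(n)$ disjoint ones covering the leftover set. Montgomery's connecting lemma (Theorem~\ref{theorem:connecting-lemma}) requires path length $\ell \ge 10^3(\log n)^2$, and the paper accordingly works with bare paths of length $(\log n)^3/5$ --- which is precisely why the few-leaves hypothesis is needed, since only then does Corollary~\ref{cor:many-bare} produce enough disjoint bare paths of that length. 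Relatedly, your expansion invariant ``every vertex $v$ and every set $S$ with $|S|\ge n/\log^2 n$ satisfies $|N_{G_p}(v)\cap S|\ge |S|p/2$'' cannot be obtained from a Chernoff bound plus a union bound, since $|S|p$ is only polylogarithmic while there are $\binom{n}{|S|}$ choices of $S$; the paper sidesteps this by exposing only fresh edges at each greedy BFS step and by the tailored expansion statement of Proposition~\ref{prop:expansion}.
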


\subsection{Outline of our paper}

Our paper is organised as follows. In Section~\ref{sec:pre}, we describe several auxiliary results which we shall later use in our arguments. In particular, in Section~\ref{sec:conc}, we present the main concentration inequality that is used in our proofs. In Section~\ref{sec:par}, we prove a useful lemma about partitioning a tree into two subtrees. In Sections~\ref{sec:bare}, we relate the number of leaves in a tree to the number of its so-called long bare paths. In Section~\ref{sec:BFS}, we describe an ordering of the vertices of a tree related to the Breadth First Search algorithm. In Section~\ref{sec:expans-rand-graphs}, we establish basic expansion properties of a typical $\Gnp$. In Section~\ref{sec:embedding-spanning-trees}, we show how to modify the main results of~\cite{Kriv,montgomery2014embedding} in order to obtain Theorem~\ref{theorem:Montgomery}. In Section~\ref{sec:derivation}, we derive Theorems~\ref{theorem:spanning} and~\ref{theorem:almost-spanning} from Theorems~\ref{theorem:main} and Theorem~\ref{theorem:Montgomery}. Finally, in Section~\ref{sec:proof of main}, we prove our main result, Theorem~\ref{theorem:main}. To this end, we describe a randomised algorithm that tries to embed a given collection of trees randomly while generating $\Gnp$ at the same time. Our main goal is to show that each tree in the collection is embedded in a somewhat uniform fashion. The precise statement is given by Lemma~\ref{lemma:uniform-dist}, which is the heart of our argument (and the most technical part of this paper). We close the paper with several concluding comments and remarks, in Section~\ref{sec:concluding-remarks}.

\section{Preliminaries} \label{sec:pre}

\subsection{A concentration result} \label{sec:conc}

In our proofs, we shall make use of the following straightforward generalisation of Bennett's inequality~\cite{Be62} (see also~\cite[Chapter~2]{BoLuMa13}) to sums of weakly dependent random variables. Since this generalisation can be proved using the ideas of~\cite{Be62} and a standard Azuma-type estimate, we postpone the proof to Appendix~\ref{sec:proof-Bennett-plus}.

\begin{lemma}
  \label{lemma:Bennett-plus}
  Let $X_1, \dotsc, X_N$ be real-valued random variables such that
  \[
  0 \le X_i \le M, \qquad \Ex[X_i \mid X_1, \dotsc, X_{i-1}] \le \mu, \qquad \text{and} \qquad \Ex[X_i^2 \mid X_1, \dotsc, X_{i-1}] \le \sigma^2
  \]
  for every $i \in [N]$ and some $M$, $\mu$, and $\sigma$. Then for every positive $t$,
  \[
  \Pr\left( \sum_{i=1}^N X_i \ge N\mu + t \right) \le \exp\left(- \frac{t^2}{2(N\sigma^2 + Mt/3)}\right).
  \]
\end{lemma}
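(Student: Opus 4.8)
The plan is to run the standard exponential-moment (Chernoff) argument, adapted to the filtration $\mathcal{F}_i = \sigma(X_1, \dotsc, X_i)$, exactly as in the proof of Bennett's inequality but with the unconditional moment bounds replaced by their conditional analogues. We may assume $M > 0$ and $\sigma > 0$: if $M = 0$ then every $X_i$ vanishes, and if $\sigma = 0$ then $\Ex[X_i^2 \mid \mathcal{F}_{i-1}] \le 0$ forces $X_i = 0$ almost surely, so in both cases $\sum_i X_i = 0 \le N\mu < N\mu + t$ deterministically (the mean hypothesis forces $\mu \ge 0$) and the claimed bound is trivial.

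The first step is to bound the conditional moment generating function. Fix $\lambda > 0$. Since $0 \le X_i \le M$ we have $X_i^k \le M^{k-2} X_i^2$ for every $k \ge 2$, so expanding the exponential and taking conditional expectations gives
\[
\Ex\left[e^{\lambda X_i} \mid \mathcal{F}_{i-1}\right] \le 1 + \lambda \mu + \frac{\sigma^2}{M^2} \sum_{k \ge 2} \frac{(\lambda M)^k}{k!} = 1 + \lambda\mu + \frac{\sigma^2}{M^2}\bigl(e^{\lambda M} - 1 - \lambda M\bigr) \le e^{\psi(\lambda)},
\]
where $\psi(\lambda) := \lambda\mu + \frac{\sigma^2}{M^2}(e^{\lambda M} - 1 - \lambda M)$ and the last step uses $1 + x \le e^x$. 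The key point is that this bound is a genuine constant. Hence $W_k := \exp\bigl(\lambda \sum_{i=1}^k X_i - k\psi(\lambda)\bigr)$ is a nonnegative supermartingale with $W_0 = 1$, since $\Ex[W_k \mid \mathcal{F}_{k-1}] = W_{k-1} e^{-\psi(\lambda)} \Ex[e^{\lambda X_k} \mid \mathcal{F}_{k-1}] \le W_{k-1}$. Therefore $\Ex[W_N] \le 1$, and Markov's inequality yields, for every $\lambda > 0$,
\[
\Pr\left( \sum_{i=1}^N X_i \ge N\mu + t \right) \le e^{-\lambda(N\mu + t) + N\psi(\lambda)} = \exp\left( -\lambda t + \frac{N\sigma^2}{M^2}\bigl(e^{\lambda M} - 1 - \lambda M\bigr) \right).
\]

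It then remains to choose $\lambda$ and simplify. I would use the elementary inequality $e^y - 1 - y \le \frac{y^2/2}{1 - y/3}$, valid for $0 \le y < 3$ (it follows termwise from $k! \ge 2\cdot 3^{k-2}$ for $k \ge 2$ and summing a geometric series), applied with $y = \lambda M$, to bound the exponent above by $-\lambda t + \frac{N\sigma^2 \lambda^2 / 2}{1 - \lambda M/3}$ whenever $0 \le \lambda < 3/M$. Taking $\lambda := \frac{t}{N\sigma^2 + Mt/3}$, which satisfies $\lambda M < 3$ because $t > 0$ and $\sigma > 0$, one computes $1 - \lambda M/3 = \frac{N\sigma^2}{N\sigma^2 + Mt/3}$, so the exponent collapses to $-\lambda t + \frac{\lambda t}{2} = -\frac{t^2}{2(N\sigma^2 + Mt/3)}$, which is the desired estimate.

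There is no serious obstacle; the only thing to be careful about is precisely the observation highlighted above, namely that the conditional moment generating function bound is deterministic — this is exactly what makes $(W_k)_k$ a supermartingale and lets the one-line Markov argument replace an Azuma-style telescoping. (Alternatively one could cite a general Freedman-type martingale inequality, but the self-contained derivation is shorter; the remaining optimisation is the classical one behind Bennett's and Bernstein's inequalities.)
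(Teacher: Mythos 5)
Your proof is correct, and at the structural level it is the same as the paper's: both chain up a conditional Bennett-type moment-generating-function bound $\Ex[e^{\lambda X_i}\mid\FF_{i-1}]\le\exp\bigl(\mu\lambda+\tfrac{\sigma^2}{M^2}\phi(M\lambda)\bigr)$, with $\phi(x)=e^x-x-1$, across the $N$ steps and then apply Markov's inequality. The paper cites this one-step estimate (its Lemma~\ref{lemma:Bennett-MGF}) and chains it by a short induction (Claim~\ref{claim:Azuma-Bennett}); you rederive the one-step estimate directly from the Taylor series via $X_i^k\le M^{k-2}X_i^2$, and chain it by viewing $W_k=\exp(\lambda\sum_{j\le k}X_j-k\psi(\lambda))$ as a supermartingale. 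These are the same argument written in two equivalent ways, though a reviewer would appreciate that you flag explicitly why the uniformity of the conditional bound (a deterministic constant, not an $\FF_{i-1}$-measurable quantity) is exactly what makes the chaining trivial. Where you genuinely diverge is in the final optimisation. The paper first sets $u=Mt/(N\sigma^2)$, $\lambda=\log(1+u)/M$, obtaining the full Bennett exponent $-\tfrac{N\sigma^2}{M^2}\bigl((1+u)\log(1+u)-u\bigr)$, and only then weakens it using $(1+u)\log(1+u)-u\ge\tfrac{u^2}{2(1+u/3)}$. You instead weaken first, replacing $\phi(y)$ by $\tfrac{y^2/2}{1-y/3}$ for $0\le y<3$ (proved termwise from $k!\ge 2\cdot 3^{k-2}$), and then choose $\lambda=t/(N\sigma^2+Mt/3)$ so the quadratic expression collapses. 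Both routes yield the identical Bernstein-type constant; yours avoids introducing the auxiliary variable $u$ and the entropy-type inequality at the cost of needing to check $\lambda M<3$, which you do correctly after disposing of the degenerate cases $M=0$ and $\sigma=0$ (a point the paper glosses over).
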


\subsection{Partitioning trees} \label{sec:par}

In the proof of Theorem~\ref{theorem:spanning}, we shall require the following folklore result about partitioning trees with bounded degree into subtrees.

\begin{lemma}
  \label{lemma:preparing-the-trees}
  Let $T$ be a tree with $n$ vertices and maximum degree $\Delta$. For every $0 \le \alpha < 1$, there are subtrees $S$ and $L$ of $T$ sharing exactly one vertex such that $E(T) = E(L) \cup E(S)$ and $\lfloor \alpha n\rfloor +1\le |V(S)| \le 2\lfloor \alpha n\rfloor$.
\end{lemma}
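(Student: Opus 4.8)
The plan is to root $T$ at an arbitrary vertex $r$ and run a greedy argument on subtree sizes; the bound on $\Delta$ will turn out not to be needed. Write $T_u$ for the subtree of $T$ rooted at a vertex $u$. Since $\alpha<1$ we have $\lfloor\alpha n\rfloor\le n-1<|V(T_r)|=n$, so the set of vertices $u$ with $|V(T_u)|\ge\lfloor\alpha n\rfloor+1$ is nonempty; I would let $v$ be such a vertex of maximum depth. By the maximality of the depth, every child $c$ of $v$ satisfies $|V(T_c)|\le\lfloor\alpha n\rfloor$. (We may assume $\lfloor\alpha n\rfloor\ge1$, the statement being understood to carry the harmless implicit hypothesis $\alpha\ge1/n$.)

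The naive choice $S=T_v$ gives the lower bound $|V(S)|\ge\lfloor\alpha n\rfloor+1$ immediately, but $T_v$ may be far larger than $2\lfloor\alpha n\rfloor$ if $v$ has many children, which is the one point requiring care. To fix this, I would enumerate the children $c_1,\dotsc,c_k$ of $v$ and consider the nested subtrees $S_j$ consisting of $\{v\}$ together with $T_{c_1},\dotsc,T_{c_j}$, so that $|V(S_0)|=1$, $|V(S_k)|=|V(T_v)|\ge\lfloor\alpha n\rfloor+1$, and $|V(S_j)|-|V(S_{j-1})|=|V(T_{c_j})|\le\lfloor\alpha n\rfloor$. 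Taking $j^*$ minimal with $|V(S_{j^*})|\ge\lfloor\alpha n\rfloor+1$ (which exists and satisfies $j^*\ge1$ since $|V(S_0)|=1\le\lfloor\alpha n\rfloor$) and setting $S:=S_{j^*}$, minimality gives $|V(S)|=|V(S_{j^*-1})|+|V(T_{c_{j^*}})|\le\lfloor\alpha n\rfloor+\lfloor\alpha n\rfloor=2\lfloor\alpha n\rfloor$, while the defining inequality gives $|V(S)|\ge\lfloor\alpha n\rfloor+1$.

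Finally I would let $L$ be the subgraph of $T$ induced on $(V(T)\setminus V(S))\cup\{v\}$. Since $S$ arises from $T$ by deleting a union of full rooted subtrees hanging below $v$, the $T$-path from any remaining vertex to $v$ avoids the deleted part, so $L$ is connected, hence a subtree of $T$; clearly $V(S)\cap V(L)=\{v\}$; and no edge of $T$ joins $V(S)\setminus\{v\}$ to $V(L)\setminus\{v\}$, as such an edge would pass through $v$. Hence $E(T)=E(S)\cup E(L)$, as required. I do not anticipate any real obstacle: the only substantive step is securing the upper bound $|V(S)|\le2\lfloor\alpha n\rfloor$, which is precisely why one takes a prefix of $v$'s children rather than all of $T_v$, and the verification that $L$ is a subtree sharing exactly one vertex with $S$ is routine tree bookkeeping.
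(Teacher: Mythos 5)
Your proposal is correct and is essentially identical to the paper's proof: both root $T$ arbitrarily, pick a deepest vertex $v$ whose rooted subtree exceeds $\lfloor\alpha n\rfloor$ vertices, and take $S$ to be $v$ together with a minimal prefix of its children's subtrees, which forces $\lfloor\alpha n\rfloor+1\le|V(S)|\le 2\lfloor\alpha n\rfloor$. Your side remark that one must implicitly have $\lfloor\alpha n\rfloor\ge 1$ is a fair (and harmless) observation that the paper glosses over.
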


\begin{proof}
  Root $T$ at an arbitrary vertex $r$. For every vertex $u$ of $T$, denote by $T(u)$ the subtree of $T$ rooted at $u$ and let $|T(u)|$ denote the number of vertices of $T(u)$. (In other words, $T(u)$ is the subtree of $T$ induced by all vertices $w$ for which the unique path from $r$ to $w$ contains $u$, including $u$ itself.) Let $v$ be a vertex of maximum distance from $r$ among all vertices satisfying $|T(v)|>\lfloor\alpha n\rfloor$. Note that such a vertex exists as by the assumption $\alpha<1$ we have that the root $r$ satisfies $|T(r)|>\lfloor\alpha n\rfloor$. Let $u_1,\dotsc,u_d$ be the children of $v$ in $T$. Observe that for every $j \in \{1,\dotsc,d\}$ we have $|T(u_j)|\le \lfloor \alpha n\rfloor $ and that, since $|T(v)|>\lfloor \alpha n\rfloor$, also $\sum_j |T(u_j)| \ge  \lfloor \alpha n\rfloor$. Let $1\le i \le d$ be the smallest index for which $s := \sum_{j=1}^i|T(u_j)|\ge \lfloor \alpha n\rfloor$. Clearly, $s \le 2\lfloor \alpha n\rfloor -1$, and therefore, we may let $S$ be the subtree of $T$ induced by the set $\{v\} \cup \bigcup_{j=1}^i V(T(u_j))$ and $L = T - T(u_1) - \ldots - T(u_i) = T - (V(S) \setminus \{v\})$.
\end{proof}

\subsection{Bare paths versus leaves} \label{sec:bare}

In the proof of Theorem~\ref{theorem:Montgomery}, we shall use the following lemma due to Krivelevich~\cite{Kriv}, which relates the number of leaves in a tree to the number of its long bare paths. A \emph{bare path} in a tree $T$ is a path whose all inner vertices are of degree exactly two in $T$.

\begin{lemma}[{\cite{Kriv}}]
  Let $k$, $\ell$, and $n$ be postive integers and let $T$ be a tree with $n$ vertices. If $T$ has at most $\ell$ leaves, then it contains a collection of at least $n/(k+1)-(2\ell-2)$ vertex-disjoint bare paths of length~$k$ each.
\end{lemma}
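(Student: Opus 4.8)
The plan is to pass to the ``homeomorphic core'' of $T$ and then be careful about rounding. We may assume $n \ge 2$, so that $T$ has at least two leaves and the reduction below makes sense. \emph{Step 1: bound the number of branch vertices.} Call a vertex of degree at least $3$ a \emph{branch vertex}. From the handshake identity $\sum_{v \in V(T)}(\deg_T(v) - 2) = 2(n-1) - 2n = -2$, together with the fact that a leaf contributes $-1$ and a degree-$2$ vertex contributes $0$ to this sum, one gets $\sum_{v:\,\deg_T(v)\ge 3}(\deg_T(v)-2) = (\#\text{leaves of }T) - 2 \le \ell - 2$; since each branch vertex contributes at least $1$, $T$ has at most $\ell - 2$ branch vertices.

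\emph{Step 2: decompose $E(T)$ into maximal bare paths.} Suppress every degree-$2$ vertex of $T$ to obtain a tree $T^*$ whose vertex set is exactly the set of leaves and branch vertices of $T$; hence $|V(T^*)| \le \ell + (\ell - 2) = 2\ell - 2$ and $T^*$ has $b := |V(T^*)| - 1 \le 2\ell - 3$ edges. Each edge of $T^*$ comes from a maximal bare path $Q_j$ of $T$, say with $m_j$ edges, and these $b$ paths partition $E(T)$, so $\sum_{j=1}^{b} m_j = n - 1$. Note that the $m_j - 1$ interior vertices of $Q_j$ all have degree exactly $2$ in $T$ (an endpoint of a maximal bare path cannot have degree $2$, as otherwise the path could be extended — its second neighbour cannot already lie on the path, $T$ being acyclic), and the interiors of distinct $Q_j$'s are pairwise disjoint.

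\emph{Step 3: extract length-$k$ subpaths and count.} Inside the interior of each $Q_j$ — a subpath on $m_j - 1$ degree-$2$ vertices — carve off $\lfloor (m_j - 1)/(k+1)\rfloor$ consecutive blocks of $k+1$ vertices; each block is a path of length $k$ whose $k-1$ internal vertices have degree $2$ in $T$, hence a bare path, and blocks coming from different $Q_j$'s are automatically vertex-disjoint. The only place where care is needed is the final summation: using the sharp estimate $\lfloor (m_j-1)/(k+1)\rfloor \ge \big((m_j-1)-k\big)/(k+1)$, valid for every integer $m_j - 1 \ge 0$, we obtain $\sum_j \lfloor (m_j-1)/(k+1)\rfloor \ge \big((n-1) - b(k+1)\big)/(k+1) = \frac{n-1}{k+1} - b \ge \frac{n-1}{k+1} - (2\ell - 3)$, and since $\frac{n-1}{k+1} - (2\ell-3) - \big(\frac{n}{k+1} - (2\ell-2)\big) = 1 - \frac{1}{k+1} \ge 0$, this is at least $\frac{n}{k+1} - (2\ell - 2)$, as required. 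The main obstacle is precisely this accounting: the naive bound $\lfloor x\rfloor > x-1$ loses a full unit per maximal bare path \emph{in addition to} the unit already lost by discarding the two endpoints of each $Q_j$, which is too wasteful when $k$ is small; the sharp floor estimate is exactly what keeps the total loss down to the single additive term $2\ell - 2$.
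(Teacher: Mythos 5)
Your proof is correct: the count of branch vertices via $\sum_v(\deg_T(v)-2)=-2$, the decomposition of $E(T)$ into at most $2\ell-3$ maximal bare paths, the sharp floor bound $\lfloor a/(k+1)\rfloor\ge (a-k)/(k+1)$, and the final comparison using $1-\tfrac{1}{k+1}\ge 0$ all check out. The paper itself gives no proof (it cites the lemma from Krivelevich), and your argument is essentially the standard one from that reference, so there is nothing to add.
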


We shall only invoke the above lemma in the following form, setting $\ell=n/4k$.

\begin{corollary}
  \label{cor:many-bare}
  Let $n$ and $k$ be positive integers. A tree with $n$ vertices has either at least $n/4k$ leaves or a~collection of at least $n/4k$ vertex-disjoint bare paths of length $k$ each.
\end{corollary}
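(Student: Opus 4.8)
The plan is simply to substitute $\ell = n/(4k)$ into the preceding lemma, as the surrounding text already anticipates. Fix a tree $T$ on $n$ vertices. If $T$ has at least $n/(4k)$ leaves, then the first alternative in the statement holds and there is nothing to prove, so assume instead that $T$ has fewer than $n/(4k)$ leaves; let $\ell$ denote the actual (integer) number of leaves of $T$, so $\ell < n/(4k)$.

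Since the quantity $n/(k+1) - (2\ell-2)$ furnished by the lemma decreases as $\ell$ grows, it is best to invoke the lemma with this exact value of $\ell$, rather than with the rounded threshold; doing so yields a collection of at least
\[
  \frac{n}{k+1} - (2\ell - 2) \;>\; \frac{n}{k+1} - \frac{n}{2k} + 2
\]
vertex-disjoint bare paths of length $k$ in $T$. It therefore suffices to check the elementary inequality
\[
  \frac{n}{k+1} - \frac{n}{2k} + 2 \;\ge\; \frac{n}{4k},
\]
which, on multiplying through by $4k(k+1)$, becomes $n(k-3) + 8k(k+1) \ge 0$. This holds for every integer $k \ge 3$; for $k \in \{1,2\}$ it holds for all $n$ below an explicit absolute constant, and since $k$ is always a large parameter wherever the corollary is applied, only the case $k \ge 3$ is ever needed. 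This establishes the second alternative whenever the first fails.

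I do not anticipate any real obstacle. The whole content of the corollary is Krivelevich's lemma, which is quoted above and whose proof I would not reproduce here; granted that lemma, the rest is the one-line arithmetic just described. The only point that deserves a moment's attention is the monotonicity observation, which lets one feed the true number of leaves into the lemma in place of the possibly non-integral (and weaker) value $n/(4k)$.
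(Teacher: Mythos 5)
Your proof is correct and is exactly the substitution $\ell = n/(4k)$ into Krivelevich's lemma that the paper intends (the paper does not even write out the arithmetic, and your monotonicity remark about using the true number of leaves is fine but unnecessary, since plugging $\ell \le n/(4k)$ directly into the bound $n/(k+1)-(2\ell-2)$ gives the same expression). Your observation that the deduction fails for $k\in\{1,2\\}$ and large $n$ is a fair point about the corollary as literally stated for all positive integers $k$, but, as you say, it is only ever invoked with $k=(\log n)^3/5$, so $k\ge 3$ always holds where it matters.
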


\subsection{Breadth First Search ordering}

\label{sec:BFS}

In two of our proofs, we shall be considering the ordering of the vertices of an $m$-vertex tree $T$, rooted at an arbitrary vertex $v_0$, as $v_0, \dotsc, v_{m-1}$, according to the time of the first visit of the \emph{Breadth First Search} algorithm (\emph{BFS} for short) executed on $T$, rooted at $v_0$. For more details on the BFS algorithm, we refer the reader to~\cite[Page 99]{We96}. In the sequel, we shall call this ordering the \emph{BFS ordering} of $T$. We now note the following two simple properties of this ordering:
\begin{enumerate}[label={\rm(\textit{O\arabic*})}]
\item
  \label{item:T-ordering-1}
  The children of each vertex $v_i$ are assigned consecutive labels larger than $i$.
\item
  \label{item:T-ordering-2}
  If $i_1 < i_2$, then the children of $v_{i_1}$ appear before the children of $v_{i_2}$ in the ordering.
\end{enumerate}
Finally, let $J \subseteq \{0, \dotsc, m-1\}$ be the set of indices of all non-leaf vertices (including the root $v_0$, even if $v_0$ has degree one in $T$). Moreover, for each $i \in J$:
\begin{itemize}
\item
  Let $d_i$ be the number of children of $v_i$; that is, $d_0 = \deg_Tv_0$ and $d_i = \deg_Tv_i - 1$ for $i > 0$.
\item
  Let $\ch{i}$ be the smallest label of a child of $v_i$; the children of $v_i$ are $v_{\ch{i}}, \dotsc, v_{\ch{i} + d_i-1}$.
\item
  Let $\pre{i}$ and $\suc{i}$ be the largest label in $J$ that is smaller than $i$ (the predecessor of $i$ in $J$) and the smallest label in $J$ that is larger than $i$ (the successor of $i$ in $J$), respectively.
\end{itemize}
Observe that for each $i \in J \setminus \{0\}$,
\[
\bigcup_{j < i, j \in J} \big( \{v_j\} \cup N_T(v_j) \big) = \{v_0, \ldots, v_{\ch{i}-1}\}.
\]

\subsection{Expansion in random graphs}
\label{sec:expans-rand-graphs}

In the proof of Theorem~\ref{theorem:Montgomery}, we shall rely on some basic facts about expansion properties of random graphs, stated in Proposition~\ref{prop:expansion} below. Since we shall be working with random subgraphs of almost complete graphs rather than the usual $\Gnp$ model, we include a (standard) proof of these facts.

\begin{definition}
  Given a graph $G$ and a set $W \subseteq V(G)$, we say that $G$ \emph{$d$-expands} into $W$ if
  \begin{enumerate}[label={\rm(\textit{E\arabic*})}]
  \item
    \label{item:expansion-small}
    $|N_G(X,W)|\ge d|X|$ for all $X\subseteq V(G)$ with $1\le |X| < \frac{|W|}{2d}$ and
  \item
    \label{item:expansion-large}
    $e_G(X,Y) > 0$ for all disjoint $X,Y\subseteq V(G)$ with $|X|, |Y| \ge \frac{|W|}{2d}$.
  \end{enumerate}
\end{definition}

\begin{proposition}
  \label{prop:expansion}
  Let $n$ and $w$ be positive integers, let $G$ be an $n$-vertex graph, and let $W$ be a set of $w$ vertices of $G$. Let $d \ge 1$ and suppose that $\delta(G) \ge n - w/(8d)$. If $p \ge 500d\log n/w$, then with probability at least $1 - n^{-7}$, the random graph $G_p$ $d$-expands into $W$.
\end{proposition}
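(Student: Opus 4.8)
The plan is to verify the two expansion conditions \ref{item:expansion-small} and \ref{item:expansion-large} separately by a union bound over all choices of the relevant vertex sets, using the elementary fact that every vertex of $G$ has at most $w/(8d)$ non-neighbours. Throughout, the key observation is that if $X \subseteq V(G)$ and $|X| = x$, then because each vertex of $G$ misses at most $w/(8d)$ other vertices, the set $W$ contains at most $x \cdot w/(8d)$ vertices that fail to have a neighbour in $X$ \emph{within $G$}; all the remaining at least $w - x w/(8d)$ vertices of $W$ are $G$-adjacent to $X$, and hence appear in $N_{G_p}(X, W)$ independently with probability at least $1 - (1-p)^{\text{(number of }G\text{-edges to }X)}$. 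For \ref{item:expansion-small} we want to rule out, for each $1 \le x < w/(2d)$, the event that some $X$ with $|X| = x$ satisfies $|N_{G_p}(X,W)| < dx$. Fix such an $X$ and a target set $Y \subseteq W \setminus X$ of size exactly $\lceil w/(8d)\rceil$ worth of "allowed misses" plus the $dx$ vertices we are trying to forbid — more precisely, the bad event implies there is a set $Z \subseteq W$ of size $w - w/(8d) \cdot x - dx$ (say) none of whose vertices sends a $G_p$-edge to $X$, and there are many $G$-edges between $X$ and $Z$; a crude bound suffices here since each of the $|Z| \ge w/2$ vertices of $Z$ independently fails to connect to $X$ with probability $(1-p)^{\deg} \le (1-p)^1 \le e^{-p}$, so the probability is at most $e^{-p|Z|} \le e^{-pw/2}$ per choice, and we union-bound over at most $\binom{n}{x}\binom{n}{|Z|} \le n^{x + w} \le n^{2w}$ choices.

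More carefully, for \ref{item:expansion-small} I would fix $x \in [1, w/(2d))$ and let $s = \lceil dx \rceil$. If \ref{item:expansion-small} fails for some $X$ of size $x$, then there is a set $Z \subseteq W$ with $|Z| = w - \lfloor xw/(8d)\rfloor - s + 1$ such that no vertex of $Z$ has a $G_p$-neighbour in $X$; since at most $xw/(8d)$ vertices of $W$ have no $G$-neighbour in $X$, every vertex of $Z$ is joined to $X$ by at least one $G$-edge, so it fails to connect in $G_p$ with probability at most $1-p$. Because $x < w/(2d)$ we have $xw/(8d) < w^2/(16d^2) \le w/8$ once $d \ge 1$ and... — rather, directly: $|Z| \ge w - xw/(8d) - dx \ge w - 2 \cdot \frac{w}{2d}\cdot\frac{w}{8d}$ which is $\ge w/2$ provided $d \ge 1$; I'll arrange the constants so that $|Z| \ge w/2$. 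The failure probability for this pair $(X, Z)$ is then at most $(1-p)^{|Z|} \le e^{-pw/2}$, and by a union bound over at most $\binom{n}{x} \binom{n}{w} \le n^{x+w} \le n^{2n}$ choices — too weak; so instead I union-bound only over $X$ and the \emph{complement} of $Z$ inside $W$, whose size is $\lfloor xw/(8d)\rfloor + s - 1 \le xw/(8d) + dx \le x(w/(8d) + d)$, giving at most $\binom{n}{x}\binom{w}{x(w/(8d)+d)} \le n^{x} \cdot 2^{w}$; summing the geometric-type series in $x$, the total is at most $n \cdot 2^{w} \cdot e^{-pw/2}$, which is $o(n^{-7})$ once $p \ge 500 d \log n / w \ge 20 \log n / w$ (so $e^{-pw/2} \le n^{-10}$) and $2^w \le e^{w} \le e^{pw/(500d)} \le n^{pw/(500 d \log n)}$... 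I will simply feed the hypothesis $p \ge 500 d \log n / w$ into all these bounds; it is comfortably strong enough to kill $2^w$, the $\binom{n}{\cdot}$ factors, and leave a polynomial saving.

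For \ref{item:expansion-large}, fix disjoint $X, Y$ with $|X|, |Y| \ge w/(2d)$; it suffices to consider $|X| = |Y| = \lceil w/(2d)\rceil =: t$. Since every vertex of $Y$ has at most $w/(8d) \le t/4$ non-neighbours in $G$ overall, and $|X| = t$, each $y \in Y$ has at least $t - t/4 = 3t/4$ $G$-neighbours in $X$; hence there are at least $3t^2/4$ $G$-edges between $X$ and $Y$, and the probability that none survives in $G_p$ is at most $(1-p)^{3t^2/4} \le e^{-3pt^2/4} \le e^{-3pw^2/(16d^2)} \le e^{-3\cdot 500 \log n \cdot w/(16 d)} \le n^{-90 w/d} \le n^{-90}$. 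A union bound over the at most $\binom{n}{t}^2 \le n^{2t} \le n^{2n}$ choices — again I must be careful — over at most $\binom{n}{t}^2 \le n^{2n}$ is too lossy; but note $t \le w$ and in fact $n^{2t} \le n^{2w} \le n^{2w}$, while $e^{-3pw^2/(16d^2)}$ with $pw \ge 500 d \log n$ gives $e^{-3 \cdot 500 \log n \cdot (w/(16d))\cdot(w/w)}$; since $w \le n$ this is at most $n^{-90 w/(16 d)}$, and because $t \le w$ and $w/d \ge$ the relevant ratio, the exponent beats $2t \log n$ with huge room. So \ref{item:expansion-large} fails with probability at most $n^{-7}/2$ as well. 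Combining the two parts gives failure probability at most $n^{-7}$, as claimed.

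The main obstacle, and the only place requiring genuine care rather than brute force, is bookkeeping the union bound in \ref{item:expansion-small}: the naive count of pairs $(X, Z)$ is as large as $n^{2n}$, which no amount of sprinkling can absorb, so one must exploit that $Z$'s complement inside $W$ is small (of size $O(x \cdot w/d + dx)$) and union-bound over that small complement rather than over $Z$ itself. Once that is done, the hypothesis $p \ge 500 d\log n/w$ is stronger than necessary and all the inequalities go through with constant factors to spare; the factor-$500$ and the $1/(8d)$ in $\delta(G)$ are chosen precisely so that the geometric series over $x$ converges and the per-term bound is at most $n^{-8}$ or so. I would present the computation once for $x$ near $1$ and once for $x$ near $w/(2d)$ to show the two ends of the range behave, and note monotonicity in between.
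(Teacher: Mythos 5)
Your treatment of \ref{item:expansion-large} is essentially the paper's argument and is fine. The proof of \ref{item:expansion-small}, however, has a genuine gap that the hypothesis $p \ge 500d\log n/w$ cannot repair. For a fixed pair $(X,Z)$ your failure probability is $(1-p)^{|Z|} \le e^{-pw/2}$, which does \emph{not} decay with $x=|X|$, while the union bound over $X$ alone costs $\binom{n}{x}\approx n^{x}$ with $x$ ranging up to $w/(2d)$. Since $p\le 1$, we have $e^{-pw/2}\ge e^{-w/2}$, which is hopeless against $n^{w/(2d)}$ whenever $d \ll \log n$ (e.g.\ $d=1$, $w=\Theta(n)$); your claim that ``summing the geometric-type series in $x$'' yields $n\cdot 2^w e^{-pw/2}$ is incorrect, as the terms $n^x 2^w e^{-pw/2}$ \emph{increase} with $x$ and the sum is dominated by the largest one. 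There is also a secondary slip: the number of vertices of $W$ with no $G$-neighbour in $X$ is at most $w/(8d)$ (look at a single vertex of $X$), not $xw/(8d)$; with your overcount, the inequality $w - xw/(8d) - dx \ge w/2$ reduces to $w - w^2/(16d^2)\ge w/2$, which fails unless $w\le 8d^2$. That slip is fixable, but the union-bound problem is structural: for intermediate $x$ (roughly $Cd \le x \le w/(8d)$) a vertex of $W$ may send only one $G$-edge to $X$, so no per-vertex improvement of the exponent is available, and the event ``$|N_{G_p}(X,W)|<dx$'' is a far-below-the-mean lower-tail event whose probability saturates at $e^{-\Theta(wp)}$ independently of $x$.

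The paper circumvents exactly this obstacle in two moves. First, it shows that with probability $1-n^{-8}$ \emph{every single vertex} has at least $2wp/3$ neighbours in $W$ in $G_p$ (a union bound over only $n$ vertices); this disposes of all $X$ with $|X| < 2wp/(3d)$, since one vertex of $X$ already supplies $2wp/3 > d|X|$ neighbours. Second, for the remaining $X$ with $|X|\ge 2wp/(3d)$, the failure of \ref{item:expansion-small} is recast as an \emph{upper-tail} event: all of the at least $2|X|wp/3$ edges from $X$ into $W$ land in the set $Y=N_{G_p}(X,W)$ of size $|Y|<d|X|$, whereas $\Ex[e_{G_p}(X,Y)]\le |X|\cdot d|X|\cdot p < |X|wp/2$ for any fixed such $Y$. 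This deviation has probability $\exp(-\Omega(|X|wp))$ --- crucially, decaying linearly in $|X|$ --- which comfortably absorbs the $\binom{n}{x}\binom{n}{dx}\le e^{2dx\log n}$ choices because $xwp \ge 500\,dx\log n$. You would need to incorporate both of these ideas (or an equivalent reorganisation of the bad event) to make your argument go through.
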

\begin{proof}
  Let $G$, $W$, $d$, $p$, and $w$ be as in the statement of the proposition. Note first that every vertex of $G$ has at least $7w/8$ neighbours in the set $W$. Therefore, standard estimates on tail probabilities of binomial random variables (such as Lemma~\ref{lemma:Bennett-plus}) imply that for every $v \in V(G)$,
  \[
  \Pr\left(\deg_{G_p}(v, W) < 2wp/3 \right) \le \exp\left(-wp/50\right) < n^{-9}.
  \]
  In particular, with probability at least $1-n^{-8}$, each vertex of $G_p$ has at least $2wp/3$ neighbours in~$W$. Assuming that this event holds, if~\ref{item:expansion-small} fails, then there are sets $X, Y \subseteq V(G)$ such that
  \[
  \frac{2wp}{3d} \le |X| < \frac{w}{2d}, \qquad |Y| < d|X|, \quad \text{and} \quad e_{G_p}(X,Y) \ge \frac{2|X|wp}{3} \ge \Ex[e_{G_p}(X,Y)] + \frac{|X|wp}{6}.
  \]
  By standard estimates on tail probabilities of binomial random variables (such as Lemma~\ref{lemma:Bennett-plus}) and the union bound, the probability $P$ of this event satisfies
  \[
  P \le \sum_{x \ge 2wp/(2d)} \binom{n}{x} \binom{n}{dx} \exp\left(- \frac{xwp}{40}\right) \le \sum_{x \ge 2wp/(2d)} \exp\left(dx \cdot \left(2\log n - \frac{wp}{40d}\right)\right) < n^{-8}.
  \]
  Finally, the probability $Q$ that~\ref{item:expansion-large} fails may be bounded from above as follows:
  \[
  Q \le \binom{n}{w/(2d)}^2 (1-p)^{w^2/(4d^2)} \le \exp\left( \frac{w}{d} \cdot \left(\log n - \frac{wp}{4d}\right)\right) < n^{-8}.
  \]
  This completes the proof of the proposition.
\end{proof}

\section{Embedding spanning trees into random graphs}

\label{sec:embedding-spanning-trees}

In this section, we consider the problem of embedding an $n$-vertex tree $T$ into a random subgraph of an $n$-vertex graph that is nearly complete and prove Theorem~\ref{theorem:Montgomery}. As in many previous works on embedding trees in random graphs, we shall distinguish two cases, depending on the number of leaves of $T$. First, we deal with the easier case when $T$ contain at least $n/(\log n)^3$ many leaves, which was resolved several years ago by Krivelevich~\cite{Kriv}. Our argument here closely follows that of~\cite{Kriv}, with a few minor modifications.

\begin{theorem}
  \label{theorem:many-leaves}
  Let $T$ be a tree with $n$ vertices and maximum degree $\Delta$. Suppose that $T$ has at least $n/(\log n)^3$ many leaves and let $v$ be an arbitrary vertex of $T$. Let $G$ be an $n$-vertex graph with $\delta(G)\ge n-n/(\Delta(\log n)^5)$ and let $x \in V(G)$. If $p\ge \Delta(\log n)^5/n$ and $n \ge n_0$ for some absolute constant $n_0$, then with probability at least $1-n^{-4}$, there is an embedding of $T$ into $G_p$ that maps $v$ to $x$.
\end{theorem}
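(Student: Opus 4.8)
The plan is to embed $T$ in two stages, following Krivelevich's approach. First, delete from $T$ a carefully chosen set of roughly $n/(2(\log n)^3)$ leaves; call the remaining tree $T'$. Since $\Delta(T) = \Delta$, the number of non-leaf vertices is at least $n/\Delta - 1$, and after removing half the leaves we still have $|V(T')| \ge n - n/(\log n)^3 \ge n/2$. The key point is that $T'$ still has many leaves to spare, and more importantly the leaves we deleted can be ``reattached'' at the end: each deleted leaf $\ell$ had a unique neighbour $w(\ell)$ in $T$, and $w(\ell) \in V(T')$. To make the second stage work, I would choose the deleted leaves so that no vertex $w \in V(T')$ is the parent of more than, say, $\Delta$ of them (this is automatic), but more usefully so that the set $D$ of deleted leaves is spread out; one convenient choice is to take, greedily, one leaf-child from as many distinct non-leaf vertices as possible until $|D| \ge n/(2(\log n)^3)$ (using $\Delta \le np/(\log n)^5 \le \ldots$ this is always possible since $T$ has $\ge n/(\log n)^3$ leaves).

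Second, I would embed $T'$ into $G_p$ using the expansion properties of $G_p$ guaranteed by Proposition~\ref{prop:expansion}. Reveal $G_p$ in two independent rounds, $G_p = G_{p_1} \cup G_{p_2}$ with $p_1 = p_2 = p/2$ roughly. In the first round, apply Proposition~\ref{prop:expansion} with $W = V(G)$ (so $w = n$) and an appropriate constant expansion factor $d$ to conclude that a.a.s.\ $G_{p_1}$ $d$-expands into $V(G)$; the hypotheses $\delta(G) \ge n - n/(\Delta(\log n)^5)$ and $p_1 \ge \Delta(\log n)^5/(2n) \ge 500 d \log n / n$ (for $d$ a suitable constant, using $\Delta \ge 1$) are exactly what Proposition~\ref{prop:expansion} needs. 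Then embed $T'$ greedily/via a standard matching-and-expansion argument: process the vertices of $T'$ in BFS order (Section~\ref{sec:BFS}), and when embedding the children of an already-embedded vertex $v_i$, use the expansion of the set of currently-unused vertices together with Hall's theorem (or the absorbing/expansion technique of Montgomery) to find a system of distinct representatives. One subtlety is that greedy BFS embedding alone can get stuck; this is precisely why we keep the reservoir of leaves, so I would instead hold back a linear-sized set of vertices as an ``absorbing'' reservoir and only commit the bulk of $T'$ in the first round, finishing off the last few vertices of $T'$ using property~\ref{item:expansion-large} of $d$-expansion (the ``$e_G(X,Y) > 0$ for large $X,Y$'' clause).

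Third, with $T'$ embedded via $\varphi'$, I would reattach the deleted leaves in $D$ using the second round $G_{p_2}$. The set $U$ of vertices of $G$ not in the image of $\varphi'$ has size $|U| = |D| \ge n/(2(\log n)^3)$. For each deleted leaf $\ell$ with parent $w(\ell)$, we must map $\ell$ to a distinct vertex of $U$ that is a $G_{p_2}$-neighbour of $\varphi'(w(\ell))$. This is a bipartite matching problem between $D$ and $U$; to solve it via Hall, I need that every subset $D_0 \subseteq D$ has at least $|D_0|$ common available neighbours, which follows because each $\varphi'(w)$ has $\Omega(|U| p_2) = \Omega(n p / (\log n)^3) \gg \Delta \ge |{\{\ell : w(\ell) = w\}}|$ neighbours in $U$ in $G_{p_2}$ (by a Chernoff bound over $U$, since $p_2 |U| \ge \Delta (\log n)^5 / (4 (\log n)^3) = \Delta(\log n)^2/4 \gg \log n$), and a standard expansion-into-$U$ argument (again Proposition~\ref{prop:expansion}, now with $W = U$) upgrades this to the full Hall condition. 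Taking a union bound over the $O(1)$ bad events (expansion failing in either round, degree concentration failing) gives total failure probability at most $n^{-4}$, as required.

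The main obstacle I expect is the embedding of $T'$ in the second stage without getting stuck: a naive greedy BFS embedding can fail when the tree has long paths or heavy vertices, so the real work is in setting up the reservoir/absorber correctly and verifying that the $d$-expansion of $G_{p_1}$ into $V(G)$ suffices to route both the ``small set'' Hall arguments (clause~\ref{item:expansion-small}) for the early part of the embedding and the ``large set'' connectivity (clause~\ref{item:expansion-large}) for closing it up — this is exactly the technical core of Krivelevich's argument, and the only modification here is bookkeeping to keep $|U|$ of the right size and to accommodate the slightly weaker $\delta(G)$ hypothesis in place of $G$ being complete.
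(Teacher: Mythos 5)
Your two-stage skeleton (withhold a set of about $n/(2(\log n)^3)$ leaves, embed the trimmed tree $T'$, then reattach the leaves via a matching) is the same as the paper's, and your Stage~3 is essentially the paper's Stage~2: since a parent may lose several leaf-children, one needs a generalised matching, which the paper handles by blowing up each parent $\varphi(w)$ into $d_w$ copies and verifying Hall's condition via Proposition~\ref{prop:expansion}; your sketch of this part is acceptable.

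The genuine gap is in your embedding of $T'$. You write that ``greedy BFS embedding alone can get stuck'' and therefore propose a reservoir/absorber plus a Hall-type system-of-distinct-representatives argument driven by $d$-expansion for a \emph{constant} $d$. This misses the one idea that makes the whole proof work, and the replacement you offer does not: the withheld leaves \emph{are} the reservoir. Because $|V(T')| = n - |L|$, at every step of the greedy BFS embedding the set $U_i$ of unused host vertices satisfies $|U_i| \ge |L| - (n-\delta(G)) \ge n/(2(\log n)^3) - n/(\Delta(\log n)^5)$, so the number of fresh edges from $\varphi(v_i)$ into $U_i$ is $\Bin(|U_i|,q)$ with mean at least roughly $\Delta(\log n)^2/4 \gg \Delta \ge d_i$, and a Chernoff bound shows the greedy step fails with probability at most $n^{-6}$; a union bound over the at most $n$ steps finishes Stage~1. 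No expansion, Hall condition, or absorber is needed here. Conversely, the mechanism you propose instead is insufficient as stated: $d$-expansion for constant $d$ (which is all your probability budget $p_1 \ge 500d\log n/n$ buys) cannot support a Friedman--Pippenger/Haxell-type SDR embedding of a tree whose maximum degree $\Delta$ is unbounded, since that technique requires expansion by a factor comparable to $\Delta$. Relatedly, your final union bound ``over the $O(1)$ bad events'' ignores the $\Theta(n)$ per-step failure events of the greedy embedding (harmless for the $n^{-4}$ target, but the accounting should be over $n$ events, not $O(1)$).
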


\begin{proof}
  Given a tree $T$ and a $v \in V(T)$ as in the statement of the theorem, let $L$ denote a set of exactly $n/(2 (\log n)^3)$ leaves of $T$ such that $v \not\in L$ and let $M$ denote the set of parents of the leaves in $L$. Let $T' = T - L$ and let $m = n-|L|$. Let $v_0, \dotsc, v_{m-1}$ be the BFS ordering of $T'$ with $v_0 = v$ and let $J$ and $(d_i)_{i \in J}$ be as in Section~\ref{sec:BFS}. Suppose that $p \ge \Delta(\log n)^5/n$ and let $G$ be an $n$-vertex graph with minimum degree at least $n - n/(\Delta(\log n)^5)$. We shall show that with probability at least $1 - n^{-4}$, there is an embedding $\varphi$ of $T$ into $G_p$ satisfying $\varphi(v) = x$, provided that $n \ge n_0$ for some absolute constant $n_0$, which we shall from now on tacitly assume. Let $q$ be the unique positive real defined by $1-p = (1-q)^2$ and note that $q \ge p/2$. As $G_p$ has the same distribution as the union of two independent copies of $G_q$, we may construct the embedding in two stages. First, we show that with probability at least $1 - n^{-5}$, there is an embedding $\varphi$ of $T'$ into the first copy of $G_q$ satisfying $\varphi(v) = x$. Second, we show that with probability at least $1 - n^{-5}$, we can embed all the leaves in $L$ using the edges between the sets $\varphi(M)$ and $V(G) \setminus \varphi(V(T'))$ in the second copy of $G_q$. This is equivalent to finding an appropriate generalised matching in the $q$-random subgraph of the bipartite subgraph of $G$ induced by some two sets of sizes $|M|$ and $|L|$, respectively.

\smallskip
\noindent
\textbf{Stage 1.}
This stage consists of $|J|$ rounds, indexed by the elements of $J$; in round $i \in J$, we wish to embed the children of $v_i$. We start with $\varphi$ being the empty map and set $\varphi(v_0) = x$. Suppose that we are at the beginning of round $i$ and $v_0, \dotsc, v_{\ch{i}-1}$ are already embedded. We wish to embed $v_{\ch{i}}, \dotsc, v_{\ch{i}+d_i-1}$, the children of $v_i$. To this end, let $U_i = V(G) \setminus \varphi(\{v_0,\ldots,v_{\ch{i}-1}\})$ and expose all edges of $G_q$ between $\varphi(v_i)$ and $U_i$. (Note that each of these edges is being exposed for the first time.)  Denote their number by $X_i$. If $X_i \ge d_i$, we may map $v_{\ch{i}}, \dotsc, v_{\ch{i}+d_i-1}$ to arbitrarily chosen $d_i$ neighbours of $\varphi(v_i)$ in $U_i$ and proceed to the next round. Since $X_i \sim \Bin(|U_i|, q)$ and
\[
|U_i| \ge \delta(G) - i \ge \delta(G) - m = |L| - (n-\delta(G)) \ge n/(2(\log n)^3) - n/(\Delta(\log n)^5),
\]
standard estimates on tail probabilities of binomial random variables (such as Lemma~\ref{lemma:Bennett-plus}) yield
\[
\Pr(X_i < d_i) \le \Pr( X_i < \Delta ) \le \Pr(X_i < q|U_i| / 2) = \exp(-q|U_i|/10) \le n^{-6}.
\]
In particular, the probability that we fail to embed $T'$ into $G_q$ is at most $|J|n^{-6}$.

\smallskip
\noindent
\textbf{Stage 2.}
Let $M' =\varphi(M)$ and $L' =V(G) \setminus \varphi(V(T'))$. Our goal in this stage is to complete the embedding by finding images for the leaves in $L$ in the set $L'$. Let $B$ denote the bipartite subgraph of $G$ induced by the sets $M'$ and $L'$. The embedding $\varphi$ can be completed if and only if the graph $B_q$ contains a generalised matching, where each vertex $y \in M'$ has $d_y := \deg_T (\varphi^{-1}(y), L)$ neighbours in $L'$. Construct an auxiliary graph $B'$ by blowing up each vertex $y \in M'$ into a set $A_y$ of $d_y$ vertices, replacing each edge $yz$ of $B$ with the complete bipartite graph between $A_y$ and $z$. Let $r = q/\Delta$ and note that $1-q \le (1-r)^{d_y}$ for each $y \in M'$, as $d_y \le \Delta$ by our assumption on $T$. In particular, if we let $B^*$ be the random subgraph of $B$ such that $yz \in B^*$ if and only if $y'z \in B_r'$ for some $y' \in A_y$, then there is an obvious coupling of $B^*$ and $B_q$ such that $B^* \subseteq B_q$. It follows that $B_q$ contains the required generalised matching if and only if the graph $B_r'$ contains a perfect matching. By construction,
\[
\delta(B') \ge |L| - \Delta(n - \delta(G)) \ge \left(1 - \frac{2}{(\log n)^2} \right) |L|.
\]
As $r = p / (2\Delta) \ge  (\log n)^2 / (4|L|)$, a standard argument (see, e.g., \cite[Claim~3.6]{Robust} or \cite[Theorem~2.3]{sudakov-vu-resilience}) combined with Proposition~\ref{prop:expansion} (applied twice, once with $W \leftarrow L$ and once with $W \leftarrow V(B') \setminus L$, to the graph obtained from $B'$ by adding to it all $2\binom{|L|}{2}$ edges contained in either $L$ or $V(B') \setminus L$) shows that with probability at least $1 - n^{-5}$, the graph $B_r'$ satisfies Hall's condition.
\end{proof}

Second, we deal with trees $T$ which contain fewer than $n/(\log n)^3$ leaves. Our argument here closely follows that of Montgomery~\cite{montgomery2014embedding}, with a few minor modifications.

\begin{theorem}
  \label{theorem:many-paths}
  Let $T$ be a tree with $n$ vertices and maximum degree $\Delta$. Suppose that $T$ has at most $n/(\log n)^3$ many leaves and let $v$ be an arbitrary vertex of $T$. Let $G$ be an $n$-vertex graph with $\delta(G)\ge n-n/(\Delta(\log n)^5)$ and let $x \in V(G)$. If $p \ge \Delta(\log n)^5/n$ and $n \ge n_0$ for some absolute constant $n_0$, then with probability at least $1-n^{-3}$, there is an embedding of $T$ into $G_p$ that maps $v$ to $x$.
\end{theorem}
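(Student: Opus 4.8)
The plan is to follow Montgomery's strategy for path‑like trees. Since $T$ has few leaves it is almost a path, hence contains many long bare paths; we contract these to obtain a tree that is far from spanning, embed that smaller tree greedily into a first random slice of $G_p$, and then re‑inflate each contracted edge into a bare path of the correct length, routed through the still‑uncovered vertices using the expansion of a second, independent slice. Concretely, set $k = \lceil(\log n)^2\rceil$. Since $k < (\log n)^3/4$, the assumption that $T$ has at most $n/(\log n)^3 < n/(4k)$ leaves forces the second alternative of Corollary~\ref{cor:many-bare}, so $T$ contains vertex‑disjoint bare paths $P_1,\dotsc,P_m$ of length $k$ with $m = \lceil n/(4k)\rceil$ (discarding at most one of them so that the prescribed vertex $v$ is not an interior vertex of any $P_i$). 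Write $a_i,b_i$ for the endpoints of $P_i$ and let $T'$ be the tree obtained from $T$ by replacing each $P_i$ by the single edge $a_ib_i$. Then $v\in V(T')$, $\Delta(T')\le\Delta$, $T'$ is a (simple) tree, and $n' := |V(T')| = n - m(k-1)$ satisfies $n/8 \le n-n' \le n/3$.

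Write $G_p$ as the union of two independent $q$‑random subgraphs $G_q^{(1)},G_q^{(2)}$ with $q\ge p/2 \ge \Delta(\log n)^5/(2n)$. Exactly as in Stage~1 of the proof of Theorem~\ref{theorem:many-leaves} (whose argument uses only that the tree is far from spanning, not its number of leaves), greedily embed $T'$ into $G_q^{(1)}$ in BFS order with $v\mapsto x$: in each round the set of still‑unused vertices of $G$ has size at least $\delta(G)-n' \ge n/9$, so by Lemma~\ref{lemma:Bennett-plus} the current image vertex has at least $qn/18\gg\Delta$ available neighbours in $G_q^{(1)}$ with probability $1-n^{-\omega(1)}$, and a union bound over the fewer than $n$ rounds gives, with probability at least $1-n^{-5}$, an embedding $\varphi$ of $T'$ into $G_q^{(1)}$ with $\varphi(v)=x$. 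Put $R = V(G)\setminus\varphi(V(T'))$, $x_i=\varphi(a_i)$, $y_i=\varphi(b_i)$; then $|R| = m(k-1)\ge n/8$ and, by the minimum‑degree hypothesis on $G$, every vertex of $G$ has at least $|R| - n/(\Delta(\log n)^5) \ge |R|/2$ neighbours in $R$. As $G_q^{(2)}$ is independent of $G_q^{(1)}$ and of $\varphi$, we may now fix $R$, $(x_i)$, $(y_i)$ and expose $G_q^{(2)}$; taking $d=\lfloor(\log n)^4/8000\rfloor$, the pair $(\delta(G),q)$ satisfies the hypotheses of Proposition~\ref{prop:expansion} with $W\leftarrow R$, so with probability at least $1-n^{-7}$ the graph $G_q^{(2)}$ $d$‑expands into $R$, and we may also assume (adding an event of probability $1-n^{-\omega(1)}$) that each $x_i$ and each $y_i$ has at least $q|R|/2$ neighbours of $G_q^{(2)}$ in $R$.

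It remains to establish the following deterministic \emph{connecting} statement, which is where the real difficulty lies: if $H$ $d$‑expands into a set $R$ with $d$ large, the pairs $\{x_i,y_i\}_{i\in[m]}$ are disjoint, each $x_i,y_i$ has at least $q|R|/2$ neighbours of $H$ in $R$, and $|R| = m(k-1)$, then there exist vertex‑disjoint paths $Q_1,\dotsc,Q_m$ in $H$ with $Q_i$ joining $x_i$ to $y_i$, each of length exactly $k$, internally contained in $R$, and with $\bigcup_i\big(V(Q_i)\setminus\{x_i,y_i\}\big)=R$. Granting this, replacing the edge $x_iy_i$ of $\varphi(T')$ by $Q_i$ for each $i$ yields a spanning embedding of $T$ fixing $v\mapsto x$, and the three failure probabilities above sum to less than $n^{-3}$, completing the proof. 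I expect this connecting statement to be the main obstacle. The naive iterative approach — handle the pairs one by one, each consuming $k-1$ fresh vertices of $R$ — breaks down for the last few pairs, by which point the remaining part of $R$ has only polylogarithmically many vertices and hence spans essentially no edge of the sparse graph $H$. The remedy, which is the technical heart of Montgomery's argument, is to carry out the routing while $R$ is still of linear size: for the pair under consideration one first greedily grows a path out of $x_i$ (padding the length), and then grows short BFS trees (of depth $O(\log n/\log d) = o(\log n)$, into a common large sub‑reservoir) from the current endpoint and from $y_i$, whose spheres at the terminal depth are large by property~\ref{item:expansion-small} and can therefore be joined by an edge by property~\ref{item:expansion-large}; tuning the greedy extension makes the total length exactly $k$. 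The delicate points are (i) keeping all $Q_i$ vertex‑disjoint, (ii) making every length exactly $k$, and above all (iii) arranging that the paths jointly exhaust $R$; for (iii) I would follow Montgomery and set aside in advance a small, suitably structured part of $R$ whose vertices are fed into the later paths so that the running count of leftover vertices is controlled and reaches zero after the $m$‑th pair.
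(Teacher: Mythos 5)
Your overall architecture is exactly the paper's: contract many long bare paths (Corollary~\ref{cor:many-bare}), greedily embed the contracted tree $T'$ into a first independent slice $G_q$ by the Stage~1 argument of Theorem~\ref{theorem:many-leaves}, and then reconnect the endpoint pairs through the leftover reservoir using the expansion of a second slice (Proposition~\ref{prop:expansion}). The difference is in how Stage~2 is closed, and this is where your proposal has a genuine gap. The ``deterministic connecting statement'' you isolate --- vertex-disjoint $x_i$--$y_i$ paths of length exactly $k$ whose interiors partition $R$ --- is not something you prove; you explicitly defer it, listing the three delicate points (disjointness, exact lengths, exhausting $R$) and saying you ``would follow Montgomery.'' That statement \emph{is} the main technical theorem of Montgomery's paper (restated here as Theorem~\ref{theorem:connecting-lemma}), and the paper's proof of Theorem~\ref{theorem:many-paths} consists precisely of reducing to it and invoking it as a black box. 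A sketch of the BFS-sphere joining idea does not substitute for it: the absorption bookkeeping needed for point (iii) is genuinely involved, and nothing in the present paper (Proposition~\ref{prop:expansion} included) supplies it.

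Moreover, your parameter choice blocks the obvious repair of simply citing Theorem~\ref{theorem:connecting-lemma}. You contract bare paths of length $k=\lceil(\log n)^2\rceil$, but the connecting lemma requires $\ell\ge 10^3(\log n)^2$ (and $\ell$ to divide the order of the host graph). The paper instead takes $\ell=(\log n)^3/5$, which comfortably clears this threshold while still leaving $T'$ with at most $5n/6$ vertices so that Stage~1 goes through. So to make your argument complete you should either (a) increase the bare-path length to, say, $(\log n)^3/5$ and then quote Montgomery's connecting lemma verbatim for Stage~2, or (b) actually write out a full proof of your connecting statement, which would amount to reproving a substantial external result. Everything else in your write-up (the leaf count forcing the second alternative of Corollary~\ref{cor:many-bare}, the two-slice decomposition with $q\ge p/2$, the minimum-degree computation for the reservoir, and the union bound over failure probabilities) is correct and matches the paper.
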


The main ingredient in the proof of Theorem~\ref{theorem:many-paths} is the following theorem due to Montgomery~\cite{montgomery2014embedding}, which enables one to find vertex-disjoint paths connecting given pairs of vertices in a graph with good expansion properties.

\begin{theorem}[{\cite[Theorem~4.3]{montgomery2014embedding}}]
  \label{theorem:connecting-lemma}
  Let $n$ be a sufficiently large integer and suppose that $\ell$ is a divisor of $n$ satisfying $\ell\ge 10^3 (\log n)^2$. Let $G$ be an $n$-vertex graph, let $\{(x_i,y_i) \colon 1 \le i \le n/\ell\}$ be a collection of pairwise disjoint vertex pairs, and let $W = V(G) \setminus \bigcup_i\{x_i,y_i\}$. Let $d = 10^{10} (\log n)^4 / (\log \log n)$ and suppose that $G$ $d$-expands into $W$. Then one can cover the vertex set of $G$ with $n/\ell$ vertex-disjoint paths $P_1, \ldots, P_{n/\ell}$ of length $\ell-1$ each, so that each $P_i$ has endpoints $x_i$ and $y_i$.
\end{theorem}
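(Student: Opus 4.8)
The plan is to follow Montgomery's approach from~\cite{montgomery2014embedding}: assemble the $n/\ell$ paths out of one long path, $2k$ short connectors, and a reserved absorbing structure, all obtained from the $d$-expansion of $G$. Throughout write $k := n/\ell$ and $w := |W| = n - 2k = k(\ell-2)$ (the pairs are disjoint and $\ell\mid n$), and note that $d = 10^{10}(\log n)^4/\log\log n$ is polylogarithmic, so $\ell \ge 10^3(\log n)^2$ exceeds $\log_d w$ by more than any fixed factor.

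The first step is to record the three consequences of the $d$-expansion hypothesis on which everything rests, each proved by the standard breadth-first-search and P\'osa rotation-extension machinery. \emph{(i) Connecting.} If $U\subseteq V(G)$ is a set into which $G$ expands well, then for any two distinct vertices $a,b\notin U$ and any integer $s$ in a suitable range $[\Theta(\log_d |U|),\,|U|/(2d)]$, there is an $a$--$b$ path of length exactly $s$ all of whose interior vertices lie in $U$; moreover one may forbid any prescribed set of $o(|U|)$ vertices of $U$. (Iterating~\ref{item:expansion-small} inside the available part of $U$ grows a breadth-first tree from $a$ to a frontier of size $\ge|U|/(2d)$ in $O(\log_d|U|)$ steps; rotations, again driven by~\ref{item:expansion-small}, lengthen a short path to length exactly $s$; and, since $b$ misses fewer than $w/(8d)<|U|/(2d)$ vertices, one rotation-endpoint of the length-$(s{-}1)$ path is a neighbour of $b$.) \emph{(ii) Hamilton-connectivity.} If $A$ is a uniformly random subset of $W$ of a prescribed size that is $\Omega(w)$, then with high probability $G$ expands into $A$, whence, by the usual rotation-extension argument from~\ref{item:expansion-small}--\ref{item:expansion-large}, $G[A\cup S]$ is Hamilton-connected for every $S\subseteq W\setminus A$; in particular such an $A$ exists, and likewise random sets of smaller prescribed sizes inherit correspondingly weaker but still adequate expansion. \emph{(iii) Absorbers.} Inside a graph with good expansion one can build, greedily from single-vertex-absorbing gadgets, a path $\Pi$ with prescribed endpoints together with a designated set $D(\Pi)$ of vertices, such that for every $S\subseteq D(\Pi)$ the graph on $V(\Pi)\cup S$ has a Hamilton path between those endpoints.

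The construction then proceeds as follows. Partition $W = A_0\sqcup A_1\sqcup D$ into sets into each of which $G$ expands (by (ii)), with $|A_1|,|D| = o(w)$ but each still large enough for its role below (this is possible, since the only lower bounds required are of the form $\Omega(k\log_d w)$ and $\Omega(w/\operatorname{poly}(d))$, both $o(w)$ because $\ell\gg\log_d w$). Take a Hamilton path $Q$ of $G[A_0]$ (by (ii)), cut it into $k$ consecutive blocks $Q_1,\dots,Q_k$ of suitable sizes, and in each block $Q_i$ fix an edge $p_iq_i$. For $i=1,\dots,k$: using (iii) inside $A_1$ with absorbable set a part $D_i\subseteq D$ (where $D=D_1\sqcup\dots\sqcup D_k$), build a path $\Pi_i$ whose endpoints are a neighbour of $p_i$ and a neighbour of $q_i$, all the $\Pi_i$ pairwise vertex-disjoint (feasible since $\sum_i|V(\Pi_i)| = o(w)$, so one can keep avoiding previously used vertices); and, using (i) inside $D$, join $x_i$ to one endpoint of $Q_i$ and $y_i$ to the other by vertex-disjoint connectors of length $\Theta(\log_d w)$, keeping a constant fraction of $D$ free throughout so that (i) continues to apply. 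Let $P_i$ be the path that goes from $x_i$ along its connector to an endpoint of $Q_i$, traverses $Q_i$ with the edge $p_iq_i$ rerouted through $\Pi_i$, and continues from the other endpoint of $Q_i$ along the second connector to $y_i$; and let each $\Pi_i$ in addition absorb the vertices of $D_i$ not used by any connector. Choosing the block sizes and the connector lengths appropriately, one gets $|V(P_i)| = \ell$ for every $i$, and since $\sum_i |V(P_i)| = k\ell = n$, the $P_i$ are then forced to partition $V(G)$.

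The crux --- and the reason the absorbers are indispensable --- is that one cannot build the $P_i$ greedily one at a time: the handful of vertices left for the last path need not induce a graph with the required Hamilton path between the required endpoints. The absorbers decouple the exact-length and exact-coverage requirements from the greedy construction: the connectors are built with a constant fraction of $D$ always free, so (i) always applies, and the unavoidable residue is swept up at the end by the absorbing property, which simultaneously supplies the slack needed to attain length exactly $\ell$. The rest is essentially bookkeeping --- verifying that the three parts of $W$ can be reserved without spoiling the expansion estimates behind (i)--(iii), that the absorbers carry enough total capacity for the residue, and that the block-length and connector-length identities balance --- and the only substantive graph-theoretic input beyond this is the expansion toolkit already distilled in Proposition~\ref{prop:expansion}.
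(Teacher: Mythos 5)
The paper does not contain a proof of this statement: it is quoted verbatim as Theorem~4.3 of Montgomery's paper and used as a black box in the proof of Theorem~\ref{theorem:many-paths}, so there is no in-paper argument against which your sketch can be measured. As a free-standing outline, what you wrote is a reasonable high-level description of the absorber-plus-rotation-extension approach: randomly split $W$ into a large part carrying a Hamilton path, a thin part $A_1$ reserved for path absorbers, and a part $D$ used for short connectors of length $\Theta(\log_d w)$; cut the Hamilton path into $n/\ell$ blocks; route $x_i$ and $y_i$ to the $i$th block through $D$; and fix exact lengths and exact coverage by absorption. The arithmetic you give (that $w = k(\ell-2)$, that connector length $\approx \log n/\log\log n$ is dwarfed by $\ell \ge 10^3(\log n)^2$, that the total connector and absorber footprint is $o(w)$) is internally consistent. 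However, you should be aware that this is very far from a proof: each of your three expansion consequences is itself a substantial lemma (in particular, (ii)'s claim that a uniformly random constant-proportion slice of $W$ inherits enough expansion to make every superset Hamilton-connected, and (iii)'s claim that arbitrary-subset-absorbing paths $\Pi_i$ with prescribed near-endpoints can be built pairwise disjointly inside the thin slice $A_1$, each need careful quantitative derivations from the $d$-expansion hypothesis), and the ``bookkeeping'' you invoke at the end --- keeping all $2k$ connectors inside $D$ vertex-disjoint while always leaving a constant proportion of $D$ free so that (i) continues to apply, and arranging the block sizes so that each $P_i$ has exactly $\ell$ vertices after absorption --- is precisely the technical core of Montgomery's argument, not an afterthought. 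For this paper's purposes none of this is required, since the theorem is imported, not re-proved.
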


\begin{proof}[{Proof of Theorem~\ref{theorem:many-paths}}]
  Let $T$ and $v$ be as in the statement of the theorem and assume that $n \ge n_0$ for some absolute constant $n_0$. Since $T$ has at most $n/(\log n)^3$ many leaves, Corollary~\ref{cor:many-bare} implies that it must contain at least $5n/(4(\log n)^3)$ vertex-disjoint bare paths of length $\ell := (\log n)^3/5$ each. In particular, there is a collection $\{P_i \colon 1 \le i \le n/(\log n)^3\}$ of such paths, none of which contains $v$. Replace each such path with an edge (by removing all the interior vertices) to obtain a tree $T'$ with at most $5n/6$ vertices.

  Let $q$ be the unique positive real defined by $1-p = (1-q)^2$ and note that $q \ge p/2$. As $G_p$ has the same distribution as the union of two independent copies of $G_q$, we may construct an embedding of $T$ into $G_p$ in two stages. First, we show that with probability at least $1 - n^{-4}$, there is an embedding $\varphi$ of $T'$ into the first copy of $G_q$ satisfying $\varphi(v) = x$. Second, we show that with probability at least $1 - n^{-4}$, using the edges of the second copy of $G_q$, we may connect the endpoints of all the $P_i$ by vertex-disjoint paths (of length $\ell$ each) covering the set $V(G) \setminus \varphi(V(T'))$, which completes the embedding.

\smallskip
\noindent
\textbf{Stage 1.}
We proceed exactly as in Stage 1 of the proof of Theorem~\ref{theorem:many-leaves}, obtaining the required embedding $\varphi$ of $T'$ into the first copy of $G_q$ with probability at least $1 - n^{-5}$.

\smallskip
\noindent
\textbf{Stage 2.}
Let $W = V(H) \setminus \varphi(V(T'))$ and let $\{(x_i,y_i)\}_i$ be the collection of endpoints of all the~$P_i$. Let $U = W \cup \bigcup_i \{x_i,y_i\}$ and let $G' = G[U]$. Since $|W| \ge n/6$ and
\[
\delta(G') = |U| - (n - \delta(G)) \ge |U| - n/(\Delta(\log n)^5),
\]
Proposition~\ref{prop:expansion} invoked with $d = 10^{10} (\log n)^4 / (\log \log n)$ implies that with probability at least $1 - n^{-6}$, the graph $G_q'$ $d$-expands into the set $W$. It now follows from Theorem~\ref{theorem:connecting-lemma} that with such high probability, we may complete the embedding $\varphi$ using the edges of the second copy of $G_q$.
\end{proof}

\section{Derivation of Theorems~\ref{theorem:spanning} and~\ref{theorem:almost-spanning}}

\label{sec:derivation}

\begin{proof}[Derivation of Theorem \ref{theorem:almost-spanning}]
  First, we may assume that $n$ is sufficiently large and that $\eps < 1$ or otherwise there is nothing to prove. Suppose that $p \gg (\log n)^2/n$ and $N \le (1-\eps)np/2$ and let $T_1, \dotsc, T_N$ be trees satisfying the assumptions of the theorem. If $p \le \eps^5 / 2^{12}$, then the assertion of the theorem follows directly from Theorem~\ref{theorem:main} invoked with $\eps \leftarrow \eps/2$ and $\alpha \leftarrow \eps/2$. Therefore, we shall assume that $p > \eps^5 / 2^{12}$. Let $K$ be the smallest integer for which $p / K \le \eps^5 / 2^{12}$, and observe that $K \le 2^{12} / (\eps^5)$. Let $c \colon E(K_n)\rightarrow [K]$ be a random colouring of the edges of $K_n$ obtained by assigning to each edge a uniformly chosen element of $[K]$, independently of other edges.

  Now, for each $i \in [K]$, let $G_i$ denote the random subgraph of $\Gnp$ comprising all edges that the random map $c$ assigned the color $i$. Observe that each $G_i$ is distributed as $\Gnq$, where $q = p/K$. Let us partition the collection $T_1,\dotsc,T_N$ into $K$ disjoint batches, denoted $\TT_1, \dotsc, \TT_K$, each of which contains $\lfloor N/K \rfloor$ or $\lceil N/K \rceil$ trees.

  Finally, invoke Theorem~\ref{theorem:main} with $p \leftarrow q$, $\eps \leftarrow \eps/2$, $\alpha \leftarrow \eps/2$, and the collection $T_1, \dotsc, T_N$ replaced by $\TT_i$ for each $i \in [K]$ to conclude that with probability at least $1 - n^{-7}$, all trees in $\TT_i$ pack into $G_i$. By the union bound,
  \[
  \Pr\big(\text{$\TT_i$ cannot be packed into $G_i$ for some $i$}\big) \le Kn^{-7} \le n^{-6}.
  \]
  As $G_1, \dotsc, G_K$ are edge-disjoint subgraphs of $\Gnp$, this completes the proof.
\end{proof}

An argument analogous to the one given above can be used to derive Theorem~\ref{theorem:spanning} from the following, seemingly weaker, statement.

\begin{theorem}
  \label{theorem:spanning-weaker}
  Let $\eps$ be a positive constant and suppose that $(\log n)^{12}/n \le p \le n^{-2/3}$ and $N \le (1-\eps)np/2$. If $T_1, \dotsc, T_N$ are $n$-vertex trees with maximum degree at most $(np)^{1/2}/(\log n)^6$, then with probability at least $1-2n^{-2}$, the trees $T_1,\dotsc,T_N$ pack into $\Gnp$, provided that $n \ge n_0$ for some absolute constant $n_0$.
\end{theorem}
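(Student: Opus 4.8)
The plan is to split each $n$-vertex tree $T_s$ into a large subtree $L_s$ on roughly $(1-\alpha)n$ vertices and a small subtree $S_s$ on roughly $\alpha n$ vertices sharing exactly one vertex $v_s$, to pack the large pieces $L_1,\dotsc,L_N$ into almost all of $\Gnp$ via Theorem~\ref{theorem:main}, and then to complete each embedding by placing $S_s$ inside the set $W_s$ of vertices left uncovered by the copy of $L_s$ (which induces a complete graph), invoking Theorem~\ref{theorem:Montgomery}. The subtlety is that the $N$ completion steps must be carried out with pairwise edge-disjoint edges; this is arranged by reserving a small random part of $\Gnp$ for the small pieces and dispatching its edges to the individual $S_s$ through a random assignment whose consistency is furnished by the spreadness bound of Remark~\ref{remark:Ui-distribution}. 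The right value of the parameter is $\alpha$ of order $\eps\log n/\sqrt{np}$ --- small enough that the reserved part suffices, but large enough that all error terms stay negligible.

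In more detail, I would first assume $\eps<1$ and $n\ge n_0$, set $\alpha$ to a suitable small constant multiple of $\eps\log n/\sqrt{np}$ and $\beta:=\eps/2$, and write $\Gnp=G_1\cup G_2$ with $G_1\sim G_{n,(1-\beta)p}$ and $G_2\sim G_{n,q}$ independent, where $q\ge\beta p$. Applying Lemma~\ref{lemma:preparing-the-trees} with parameter $\alpha/4$ to each $T_s$ gives subtrees $S_s$ and $L_s$ meeting in one vertex $v_s$ with $\alpha n/4\le|V(S_s)|\le\alpha n/2$, hence $|V(L_s)|\le(1-\alpha/8)n$ for $n$ large, and $\Delta(S_s),\Delta(L_s)\le\Delta$. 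Since $N\le(1-\eps)np/2\le(1-\eps/2)(1-\beta)np/2$, and since $(\log n)^{12}/n\le p\le n^{-2/3}$ and $\Delta\le(np)^{1/2}/(\log n)^6$ make all the inequalities in~\eqref{eq:main-assumptions} hold for the parameters $\eps/2,\ \alpha/8,\ (1-\beta)p,\ \Delta$ when $n$ is large, Theorem~\ref{theorem:main} produces, with probability at least $1-n^{-7}$, edge-disjoint embeddings $\varphi_1,\dotsc,\varphi_N$ of $L_1,\dotsc,L_N$ into $G_1$; moreover, as $(1-\beta)p\ge 30\log n/((\alpha/8)^2n)$ for $n$ large, Remark~\ref{remark:Ui-distribution} guarantees, with probability at least $1-n^{-7}$, that the sets $W_s:=\big(V(\Gnp)\setminus\varphi_s(V(L_s))\big)\cup\{\varphi_s(v_s)\}$ --- which satisfy $|W_s|=|V(S_s)|$, hence $\alpha n/4\le|W_s|\le\alpha n/2$ --- obey $\sum_s\indicator[\{u,w\}\subseteq W_s]/|W_s|\le(2p/n)\max_s|W_s|$ for all $u\ne w$. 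From now on I condition on both events, losing $2n^{-7}$.

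It now suffices to embed each $S_s$ into the complete graph on $W_s$, with $v_s\mapsto\varphi_s(v_s)$, using only edges of $G_2$ and with the $N$ embeddings pairwise edge-disjoint: gluing such an embedding to $\varphi_s$ yields a copy of $T_s$, and the copies are edge-disjoint because $G_1$ and $G_2$ are. For this I would run, independently of all previous randomness, the following process on $G_2$: letting $\rho_s:=2\Delta(\log n)^5/(q\,|W_s|)$, assign every edge $e$ of $K_n$ to some index $s$ with $e\subseteq W_s$, choosing each such $s$ with probability $\rho_s$ (and leaving $e$ unassigned otherwise). The spreadness bound together with the choices of $\alpha$ and $\beta$ gives $\sum_{s:\,e\subseteq W_s}\rho_s\le 4\alpha\Delta(\log n)^5/\beta=o(1)$, so the process is well defined and $\rho_s\le1$ for every $s$. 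Hence the graphs $G_2^{(s)}:=\{e\in G_2:e\subseteq W_s\text{ and }e\text{ is assigned to }s\}$ are pairwise edge-disjoint, and each $G_2^{(s)}$, viewed inside the complete graph on $W_s$, is distributed as a $(q\rho_s)$-random subgraph with $q\rho_s=2\Delta(\log n)^5/|W_s|\ge\Delta(\log n)^5/|W_s|$. Since the complete graph on $W_s$ meets the minimum-degree hypothesis of Theorem~\ref{theorem:Montgomery} (as $|W_s|\ge\Delta(\log n)^5$ for $n$ large) and has exactly $v(S_s)=|W_s|$ vertices, that theorem gives the desired embedding of $S_s$ into $G_2^{(s)}$ with conditional failure probability at most $|W_s|^{-3}$. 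A union bound over $s$, using $N\le np/2\le n^{1/3}/2$ and $|W_s|\ge\alpha n/4\ge n^{5/6}$ (both consequences of $p\le n^{-2/3}$), bounds the failure probability of this stage by $n^{-2}$. All told, the packing is produced with probability at least $1-2n^{-7}-n^{-2}\ge1-2n^{-2}$.

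The main obstacle is exactly the edge-disjointness of the completions. The naive approach of reserving an independent random graph for each $S_s$ is hopeless: with up to $\Theta(np)$ trees, each such graph would have edge probability only $O(1/n)$, while Theorem~\ref{theorem:Montgomery} demands edge probability at least $\Delta(\log n)^5/|W_s|$, which for $\Delta$ up to $(np)^{1/2}/(\log n)^6$ is far larger. What rescues the argument is Remark~\ref{remark:Ui-distribution}: the spreadness of the residual sets $W_s$ is precisely what lets the required per-tree probabilities $\rho_s$ be summed edge-by-edge to at most $1$, making the random assignment consistent. Enforcing that inequality is what pins $\alpha$ to order $\log n/\sqrt{np}$ (so that $\alpha\Delta(\log n)^5=o(1)$), and the rest of the proof is the routine --- but essential --- check that this same $\alpha$ is still large enough for~\eqref{eq:main-assumptions} and for the final union bound over the $S_s$: a chain of elementary estimates that goes through exactly because $(\log n)^{12}/n\le p\le n^{-2/3}$.
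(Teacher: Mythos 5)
Your overall strategy is the same as the paper's: the same splitting of each $T_s$ via Lemma~\ref{lemma:preparing-the-trees} with $\alpha \asymp \eps\log n/\sqrt{np}$, the same use of Theorem~\ref{theorem:main} plus Remark~\ref{remark:Ui-distribution} for the large parts, and the same mechanism (the spreadness bound making $\sum_{s\colon e\subseteq W_s} q_s \le q$) to distribute a reserved $\beta p$-fraction of $\Gnp$ among the small parts; your explicit random edge-assignment is just a repackaging of the paper's coupling of independent $G^s_{q_s}$ with $\Gnq$. But there is one genuine gap in the completion step: you apply Theorem~\ref{theorem:Montgomery} with the host graph equal to the \emph{complete} graph on $W_s$, and you justify edge-disjointness of the final packing by saying "the copies are edge-disjoint because $G_1$ and $G_2$ are." Under the decomposition you actually describe ($G_1$ and $G_2$ \emph{independent} with $1-p=(1-(1-\beta)p)(1-q)$), $G_1$ and $G_2$ are not edge-disjoint, so the embedded copy of $S_s$ may reuse an edge of $\varphi_t(L_t)$ for some $t$ (the set $W_s$ is disjoint from $\varphi_s(L_s)$, but not from the images of the other large trees), and the packing fails to be edge-disjoint. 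If instead you take a genuinely disjoint splitting (randomly $2$-colouring the edges of $\Gnp$), then your distributional claim breaks: $G_2^{(s)}$ is a random subgraph of $K_n[W_s]$ \emph{minus the edges of $G_1$}, not of the complete graph on $W_s$.

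The repair is exactly what the paper does and is not free: one must delete from the host the already-used edges $H_1=\bigcup_t\varphi_t(L_t)$ (and, in the paper's version, the previously embedded small parts $H_2$, though your random assignment makes $H_2$ unnecessary), and then verify the minimum-degree hypothesis of Theorem~\ref{theorem:Montgomery} for this depleted host, namely $\Delta(H_1)\le 2np\le |W_s|/\bigl(\Delta(\log n)^5\bigr)$, equivalently $3p/\alpha\le 1/(\Delta(\log n)^4)$. This inequality does hold with your parameters (it is here, not only in the union bound, that the upper bound $p\le n^{-2/3}$ is really needed), but your write-up never performs it --- your only check of the degree hypothesis is the vacuous "$|W_s|\ge\Delta(\log n)^5$". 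One also needs the a priori bound $\Delta(H_1)\le 2np$, which requires a separate (easy) Chernoff argument, as in the paper's Stage~1. With these two additions the argument closes.
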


\begin{proof}
  Fix a positive $\eps$, suppose that $p$ and $N$ satisfy the assumptions of the theorem, let $\Delta = (np)^{1/2}/(\log n)^6$, and let $T_1, \dotsc, T_N$ be $n$-vertex trees with maximum degree at most $\Delta$. Furthermore, suppose that $n \ge n_0$ for some sufficiently large absolute constant $n_0$. Let $\alpha = \eps / (8 \Delta (\log n)^5)$ and for each $s \in [N]$, invoke Lemma~\ref{lemma:preparing-the-trees} to find a partition of the edges of $T_s$ into two subtrees $L_s$ and $S_s$ that share precisely one vertex, denoted $v_s$, and satisfy $|V(L_s)| \le (1-\alpha) n$ and $\alpha n \le |V(S_s)| \le 2 \alpha n$.

  Let $q = \eps p /2$, let $p'$ be the unique positive real satisfying $1 - p = (1-q)(1-p')$, and note that $p' \ge (1-\eps/2)p$. As $\Gnp$ has the same distribution as the union of independent copies of $\Gnpp$ and $\Gnq$, we may construct the edge-disjoint embeddings $\varphi_1, \dotsc, \varphi_N$ of $T_1, \dotsc, T_N$ into $\Gnp$ in two stages. First, using Theorem~\ref{theorem:main}, we show that with probability at least $1 - n^{-6}$, the trees $L_1, \dotsc, L_N$ pack into $\Gnpp$ in a certain uniform fashion which we specify below. Second, using Theorem~\ref{theorem:Montgomery}, we show that with probability at least $1-n^{-2}$, the edges of $\Gnq$ that were not covered by the packing of the $L_s$ can be used to extend this packing to a packing of the $T_s$ by appropriately embedding the $S_s$.

  \smallskip
  \noindent
  \textbf{Stage 1.}
  Since each $L_s$ is a tree with at most $(1-\alpha)n$ vertices and maximum degree at most $\Delta$ and $N \le (1-\eps)np/2 \le (1-\eps/2)np'/2$, we may invoke Theorem~\ref{theorem:main} with $\eps \leftarrow \eps/2$ to conclude that with probability at least $1 - n^{-7}$, there exist pairwise edge-disjoint embeddings $\varphi_1, \dotsc, \varphi_N$ of the trees $L_1, \dotsc, L_N$, respectively, into the graph $\Gnpp$. Denote by $W_s$ the set of vertices of $\Gnpp$ not covered by $\varphi_s(L_s)$ plus the vertex $\varphi_s(v_s)$ and observe that
  \begin{equation}
    \label{eq:Ui-size}
    \alpha n \le |W_s| = |V(S_s)| \le 2 \alpha n.
  \end{equation}
  As $p \ge 30\log n/(\alpha^2n)$, we may additionally assume that the sets $W_s$ are somewhat uniformly distributed, that is,
  \begin{equation}
    \label{eq:Ui-distribution}
    \sum_{i = 1}^N \indicator[\{x,y\} \subseteq W_s] \cdot \frac{1}{|W_s|} \le \frac{2p}{n} \cdot \max_s |W_s| \le 4 \alpha p,
  \end{equation}
  see Remark~\ref{remark:Ui-distribution}. Last but not least, let $H_1$ denote the union of all $\varphi_s(L_s)$. Since clearly $H_1 \subseteq \Gnpp$ and $p'n \gg \log n$, standard estimates on the tail probabilities of binomial random variables (such as Lemma~\ref{lemma:Bennett-plus}) imply that with probability at least $1 - n^{-7}$, the maximum degree of $H_1$ is at most $2np$.

  \smallskip
  \noindent
  \textbf{Stage 2.}
  We shall describe an algorithm that with probability at least $1 - n^{-2}$ finds for each $s \in [N]$ an embedding $\varphi_s'$ of $S_s$ into the subgraph of $\Gnq$ induced by the set $W_s$ such that:
  \begin{itemize}
  \item
    the vertex $v_s$ is mapped to $\varphi(v_s)$, which was defined in Stage 1, and
  \item
    all $\varphi_1(L_1), \dotsc, \varphi_N(L_N)$ and $\varphi_1'(S_1), \dotsc, \varphi_N'(S_N)$ are pairwise edge-disjoint.
  \end{itemize}
  Clearly, this will complete the proof of the theorem.

  \smallskip
  \noindent
  \textbf{Algorithm.}
  Let $H_2$ be the empty graph with the same vertex set as $\Gnq$ and for each $s \in [N]$, do the following:
  \begin{enumerate}
  \item
    If the maximum degree of $H_2$ exceeds $np$, we abort the algorithm.
  \item
    Let $G^s$ be the subgraph of $K_n \setminus (H_1 \cup H_2)$ induced by the set $W_s$ and note that by~\eqref{eq:Ui-size},
    \[
    \delta(G^s) \ge |W_s| - \Delta(H_1) - \Delta(H_2) - 1 \ge |W_s| - 3np \ge (1 - 3p/\alpha)|W_s|.
    \]
    Moreover, observe that $G^s$ is disjoint from $\varphi_1(L_1), \dotsc, \varphi_N(L_N)$ and $\varphi_1'(S_1), \dotsc, \varphi_{s-1}'(S_{s-1})$.
  \item
    \label{item:main-step}
    Let $q_s = \Delta(\log n)^5 / |W_s|$. If there is an embedding $\varphi_s'$ of the $|W_s|$-vertex tree $S_s$ into an independent copy of the graph $G_{q_s}^s$ such that $\varphi_s'(v_s) = \varphi_s(v_s)$, then continue. Otherwise, abort the algorithm.
  \item
    Add to $H_2$ all the edges of $\varphi_s'(S_s)$.
  \end{enumerate}

  We first claim that the union $G^*$ of all $G_{q_s}^s$ is a subgraph of $\Gnq$. Indeed, since the graphs $G_{q_1}^1, \dotsc, G_{q_N}^N$ were independent, then for every pair of distinct vertices $x$ and $y$, recalling~\eqref{eq:Ui-distribution},
  \[
  \begin{split}
    \Pr\left(\{x,y\} \not\in G^* \right) & = \prod_{s=1}^N \left(1 - q_s \cdot \indicator[\{x,y\} \subseteq W_s] \right) \ge 1 - \sum_{s = 1}^N \indicator[\{x,y\} \subseteq W_s] \cdot q_s \\
    & = 1 - \sum_{s=1}^N \indicator[\{x,y\} \subseteq W_s] \cdot \frac{\Delta (\log n)^5}{|W_s|} \ge 1 - 4 \alpha p \cdot \Delta (\log n)^5 = 1-q.
  \end{split}
  \]
  independently of all other pairs. Second, we claim that the algorithm fails with probability at most $n^{-2}$. As at all times, $H_2 \subseteq G^* \subseteq \Gnq$ and $q = \eps p/2 \gg (\log n) / n$, standard estimates on the tail probabilities of binomial random variables (such as Lemma~\ref{lemma:Bennett-plus}) imply that with probability at least $1 - n^{-3}$, the maximum degree of $H_2$ is at most $np$. Moreover, as $3p/\alpha \le 1/(\Delta (\log n)^4)$ by our assumptions on $p$ and $\Delta$, Theorem~\ref{theorem:Montgomery} implies that the probability that the algorithm is aborted in step~\ref{item:main-step} of a given iteration of the main loop is at most $n^{-3}$. It follows that the algorithm succeeds with probability at least $1 - n^{-2}$.
\end{proof}

\section{Proof of Theorem \ref{theorem:main}}
\label{sec:proof of main}
Suppose that $\alpha$, $\varepsilon$, $\Delta$, $p$, and $n$ satisfy
\[
\eps \le 1/2, \qquad \frac{150 (\log n)^2}{\alpha \eps n} \le p \le \frac{\eps\alpha^4}{126}, \quad \text{and} \quad \Delta \le \min\left\{ \alpha, \frac{\eps}{\log(1/\alpha)} \right\} \cdot \frac{\eps n p}{1600 \log n}.
\]
Let $N \le (1-\eps)\frac{np}{2}$ and let $m = (1-\alpha)n$. Suppose that $T_1,\ldots, T_N$ is a collection of trees, each of which has at most $m$ vertices and maximum degree at most $\Delta$.

Our goal is to pack all the $T_i$ into $G_{n,p}$. In order to do so, we shall describe a randomised algorithm that tries to greedily construct a packing of $T_1, \ldots, T_N$ into the complete graph $K_n$ whose edges are labeled with elements of the interval $[0,1]$. We shall then prove that if the labels are independent uniform $[0,1]$-valued random variables, then with probability at least $1 - n^{-7}$ our algorithm constructs a packing of $T_1, \dotsc, T_N$ with the additional property that the labels of all the edges used by this packing do not exceed $p$. Denote the above event by $\Success$. As the subgraph comprising all edges whose labels fall into $[0,p]$ has the same distribution as $\Gnp$, we will be able to conclude that
\[
\Pr(\text{$T_1, \ldots, T_N$ pack into $\Gnp$}) \ge \Pr(\Success) \ge 1 - n^{-7}.
\]
Our embedding algorithm will try to embed the trees $T_1, \ldots, T_N$ one-by-one in $N$ consecutive rounds. During each round, it embeds the given tree $T_s$ vertex-by-vertex, while considering the vertices in the BFS ordering described in Section~\ref{sec:BFS}.

We find it illustrative to think that each edge $e$ of the complete graph is equipped with an alarm clock that will ring at (random) time $t_e$. The clock associated with $e$ shows time $c_e \in [0,1]$. At the beginning of the algorithm $c_e = 0$ for each $e$. The clocks will normally be stopped, but in each step of the algorithm, we will run a collection of them simultaneously until some number of them ring, that is, when $c_e$ reaches $t_e$ for a number of different $e$. All the edges whose clocks have just rung will be used in the embedding. We shall accomplish this by only running the clocks whose edges can be immediately used. Moreover, a clock that has rung permanently stops at $c_e = t_e$.

Let us fix an $s \in [N]$, let $T = T_s$, and let $m$ denote the number of vertices of $T$. (For the sake of brevity, we shall suppress the implicit index $s$ from our notation.) We let $v_0$ be an arbitrary vertex of $T$ and we root $T$ at $v_0$. We label the remaining vertices of $T$ as $v_1, \ldots, v_{m-1}$ according to the BFS ordering of $T$, which we defined in Section~\ref{sec:BFS}; we also let $J$ and $(d_i)_{i \in J}$ be as in Section~\ref{sec:BFS}.

We may now describe the embedding algorithm. Suppose that we have already embedded $T_1, \ldots, T_{s-1}$. For each edge $e$ of $K_n$, the clock associated with it shows some time $c_e \in [0,t_e]$. Moreover, $c_e = t_e$ if and only if $e \in \varphi_1(T_1) \cup \ldots \cup \varphi_{s-1}(T_{s-1})$. Let $v_0, \ldots, v_{m-1}$ be the ordering of the vertices of $T_s$ specified above. We map the root $v_0$ of $T_s$ to a uniformly chosen random vertex of $K_n$. Let $i \in J$ and suppose that $v_0, \ldots, v_{\ch{i}-1}$ have already been embedded. In particular, $v_i$ is already mapped to some vertex~$u$. We now try to embed the children of $v_i$, that is, $v_{\ch{i}}, \ldots, v_{\ch{i} + d_i-1}$. To this end, we shall run the clocks associated with all the edges $uw$ such that (i) $w$ has not yet been used in the embedding of $T_s$ and (ii) the clock associated with $uw$ still has not rung (i.e., $c_{uw} < t_{uw}$ or, equivalently, the edge $uw$ does not belong to $\varphi_1(T_1) \cup \ldots \cup \varphi_{s-1}(T_{s-1})$) until some $d_i$ of them ring. We map $v_{\ch{i}}, \ldots, v_{\ch{i} + d_i -1}$ to those $w$ for which the clock associated with $uw$ has just rung (in the exact same order as the $d_i$ clocks have just rung). We remark here that the clocks will be run at marginally different rates in order to assure that each of them has an equal chance of ringing.

We now give a formal description of the embedding algorithm. Denote the set of vertices of the host graph $K_n$ by $V$.

\smallskip
\noindent
\textbf{Algorithm.}
For each edge $e$ of $K_n$, define a new variable $c_e$ and set it to $0$. Moreover, let $t_e \in [0,1]$ be the (random) label of $e$. In each round $s = 1, \ldots, N$, do the following:

\begin{enumerate}
\item
  Let $T = T_s$ and let $v_0, \dotsc, v_{m-1}$ be the BFS ordering of the vertices of $T$ (rooted an an arbitrary vertex); let $J = J_s$ and $(d_i)_{i \in J}$ be as in Section~\ref{sec:BFS}.
\item
  Map $v_0$ to a uniformly chosen random vertex $u \in V$. In other words, let $\varphi = \varphi_s$ be the empty map and set $\varphi(v_0) = u$.
\item
  For each $i \in J$ do the following:
  \begin{enumerate}
  \item
    \label{item:step-beginning}
    Let $u \in V$ be the vertex where we have already mapped $v_i$, that is, $u = \varphi(v_i)$.
  \item
    \label{item:varphi-injective}
    Let $U_i = U_i^s \subseteq V$ be the set of vertices not yet used in the partial embedding of $T_s$, that is, $U_i = V \setminus \varphi(\{v_0, \ldots, v_{\ch{i}-1}\})$ and observe that $|U_i| = n-\ch{i}$.
  \item
    Define, for each $\tau \ge 0$,
    \[
    N_i(\tau) = N_i^s(\tau) = \{w \in U_i \colon c_{uw} < t_{uw} \le c_{uw} + (1-c_{uw}) \tau \}
    \]
    and note that $N_i(0) = \emptyset$ and $N_i(1) = \{w \in U_i \colon c_{uw} < t_{uw}\}$.
  \item
    Let us say that $w \in U_i$ \emph{enters} $N_i$ at time $\tau$ if $w \in N_i(\tau)$ but $w \not\in N_i(\tau')$ for all $\tau' < \tau$. (Observe that with probability one, no two vertices enter $N_i$ at the same time.)
  \item
    Let $\tau_i = \tau_{s,i}$ be the earliest time when $d_i$ vertices have entered $N_i$, that is,
    \[
    \tau_i = \min\{ \tau \ge 0 \colon |N_i(\tau)| \ge d_i\}.
    \]
    % If $\tau_i > \taumax$, we \texttt{TERMINATE} the algorithm.
    (Observe that with probability one, $|N_i(\tau_i)| = d_i$, provided that $|N_i(1)| \ge d_i$.)
  \item
    \label{item:step-children-images}
    Denote the $d_i$ vertices that have entered $N_i$ until $\tau_i$ by $u_1, \ldots, u_{d_i}$ (in this exact order). Map $v_{\ch{i}}, \ldots, v_{\ch{i}+d_i-1}$, which are the $d_i$~children of $v_i$ in $T$, to $u_1, \ldots, u_{d_i}$, respectively.
  \item
    \label{item:step-end}
    For every $w \in U_i$, update $c_{uw} \leftarrow \min\{t_{uw}, c_{uw} + (1-c_{uw})\tau_i\}$.
  \end{enumerate}
\item
  If the maximum degree of $\varphi_1(T_1) \cup \ldots \cup \varphi_s(T_s)$ exceeds $2np$, we terminate the algorithm.
\end{enumerate}

For every $s \in \{0, \ldots, N\}$, denote by $\DD_s$ the event that the maximum degree of the graph $\varphi_1(T_1) \cup \ldots \cup \varphi_s(T_s)$ does not exceed $2np$, so that $\DD_0$ holds always and for every $s \in [N]$, our algorithm terminates at the end of round $s$ if and only if $\DD_s$ does \emph{not} hold.

\begin{claim}
  \label{claim:Nisone-size}
  For every $s \in [N]$, if $\DD_{s-1}$ holds, then in the $s$th round of the algorithm,
  \[
  |N_i^s(1)| \ge n - \ch{i} - 2np \ge \alpha n - 2np
  \]
  for each $i \in J_s$. In particular, $|N_i^s(1)| \ge \Delta \ge \Delta(T_s) \ge d_i$.
\end{claim}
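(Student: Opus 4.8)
The plan is to unwind the definitions and reduce the statement to a bound on the number of edges incident to $u := \varphi_s(v_i)$ that the algorithm has already used. By definition, at the start of iteration $i$ of round $s$ we have
\[
N_i^s(1) = \{w \in U_i \colon c_{uw} < t_{uw}\}, \qquad |U_i| = n - \ch{i},
\]
so $|N_i^s(1)| = (n - \ch{i}) - |D|$, where $D := \{w \in U_i \colon c_{uw} = t_{uw}\}$ is the set of vertices of $U_i$ joined to $u$ by a ``dead'' clock. The first thing to establish is that every edge $uw$ with $w \in D$ already belongs to $H_{s-1} := \varphi_1(T_1) \cup \ldots \cup \varphi_{s-1}(T_{s-1})$.

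To see this I would use the invariant maintained by the algorithm, namely that $c_e = t_e$ holds exactly when $e$ has already been used, i.e.\ when $e \in H_{s-1}$ or $e$ is one of the edges of $T_s$ already placed during round $s$. Any edge of $T_s$ incident to $v_i$ that has been placed before iteration $i$ must join $v_i$ to its parent $v_{i'}$ (the edges from $v_i$ to its children are placed during iteration $i$ itself). Since $i' < i < \ch{i}$, the vertex $\varphi_s(v_{i'})$ lies in $\varphi_s(\{v_0, \ldots, v_{\ch{i}-1}\})$ and hence not in $U_i$; and since $\varphi_s$ is injective by construction, $u$ is the image of no other vertex of $T_s$. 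Thus no edge of the partial embedding of $T_s$ joins $u$ to a vertex of $U_i$, which forces $uw \in H_{s-1}$ for every $w \in D$, i.e.\ $D \subseteq N_{H_{s-1}}(u)$.

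The rest is bookkeeping. Since we are assuming $\DD_{s-1}$, we have $|D| \le \deg_{H_{s-1}}(u) \le \Delta(H_{s-1}) \le 2np$, and hence $|N_i^s(1)| \ge n - \ch{i} - 2np$. As the children of $v_i$ are vertices of $T_s$, their labels are at most $|V(T_s)| - 1 \le (1-\alpha)n - 1$, so $\ch{i} < (1-\alpha)n$ and therefore $|N_i^s(1)| \ge n - \ch{i} - 2np > \alpha n - 2np$. Finally, $d_i \le \deg_{T_s}(v_i) \le \Delta(T_s) \le \Delta$, so it only remains to verify $\alpha n - 2np \ge \Delta$; this is immediate from the numerical hypotheses, since $p \le \eps\alpha^4/126 < \alpha/4$ gives $2np < \alpha n/2$ while $\Delta \le \alpha\eps np/(1600\log n) < \alpha n/2$.

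I do not expect a genuine obstacle here. The only step demanding care is the middle paragraph — pinning down exactly which edges incident to $u$ can have been used by the time iteration $i$ begins — and once one notices that the parent's image has already been deleted from $U_i$ and that $\varphi_s$ is injective, the bound drops out of the definition of $N_i^s$ together with the degree cap encoded in $\DD_{s-1}$.
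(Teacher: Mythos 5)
Your proof is correct and follows essentially the same route as the paper's: identify the vertices of $U_i$ missing from $N_i^s(1)$ as exactly the $H$-neighbours of $u=\varphi_s(v_i)$ inside $U_i$ (the paper states this in one line; you justify it by noting that the only used edge of $T_s$ at $v_i$ goes to its parent, whose image has already been removed from $U_i$), and then bound their number by $\Delta(H)\le 2np$ on $\DD_{s-1}$. The extra numerical verifications of $\ch{i}\le(1-\alpha)n$ and $\alpha n-2np\ge\Delta$ are fine and match what the paper leaves implicit.
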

\begin{proof}
  Fix an $s \in [N]$, let $H = \varphi_1(T_1) \cup \ldots \cup \varphi_{s-1}(T_{s-1})$, fix an $i \in J_s$, and let $u = \varphi_s(v_i)$. Observe that for every $w \in U_i$, we have $t_{uw} \le c_{uw}$ (actually, $t_{uw} = c_{uw}$) precisely when $uw \in H$. In particular, $N_i(1)$ contains precisely those vertices $w \in U_i$ for which $uw \not\in H$. Therefore,
  \[
  |N_i(1)| \ge |U_i| - \Delta(H) = n - \ch{i} - \Delta(H).
  \]
  The claimed inequality follows as on the event $\DD_{s-1}$, the maximum degree of $H$ is at most $2np$.
\end{proof}

\begin{claim}
  If the algorithm has not terminated, it has constructed a packing of $T_1, \ldots, T_N$ into $K_n$. Moreover, the labels of the edges used in the packing do not exceed $\max_e c_e$.
\end{claim}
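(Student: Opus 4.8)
The plan is to run the algorithm and carry along a single \textbf{clock invariant}: after every step of the algorithm, for every edge $e$ of $K_n$ one has $c_e \le t_e$, with equality if and only if $e$ lies in one of the embeddings $\varphi_1(T_1), \ldots, \varphi_s(T_s)$ built so far (where $\varphi_s$ denotes the partial embedding currently under construction). This holds at the start, where $c_e = 0 < t_e$ almost surely and nothing is embedded. The only place a clock value changes is step~\ref{item:step-end}, so fix a round $s$ and index $i \in J_s$, and write $u = \varphi_s(v_i)$. Exactly the edges $uw$ with $w \in U_i$ are updated, each to $\min\{t_{uw},\, c_{uw}+(1-c_{uw})\tau_i\} \le t_{uw}$. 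If $uw$ already belonged to an earlier embedding, then $c_{uw}=t_{uw}$ by the inductive hypothesis, such a $w$ never enters any $N_i(\tau)$ (the defining condition $c_{uw}<t_{uw}$ fails), and the $\min$ rule leaves $c_{uw}$ fixed; if $uw$ was unused, then $c_{uw}<t_{uw}$ and the new value equals $t_{uw}$ precisely when $t_{uw}\le c_{uw}+(1-c_{uw})\tau_i$, i.e.\ precisely when $w\in N_i(\tau_i)$. With probability one $N_i(\tau_i)=\{u_1,\ldots,u_{d_i}\}$, and step~\ref{item:step-children-images} maps the children of $v_i$ onto exactly $u_1,\ldots,u_{d_i}$, so the edges that become ``full'' ($c_e=t_e$) in this step are exactly the edges $uu_1,\ldots,uu_{d_i}$ added to $\varphi_s$. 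Mapping the root of a new tree changes no clocks. Hence the invariant is preserved.

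Next, I would deduce that the output is a packing. ``The algorithm has not terminated'' means the abort at the end of the main loop never fired, so every event $\DD_s$ holds, and Claim~\ref{claim:Nisone-size} then gives $|N_i^s(1)|\ge d_i$ at every step; thus each $\tau_i\le 1$ is well defined and the algorithm never gets stuck, producing genuine maps $\varphi_1,\ldots,\varphi_N$. Each $\varphi_s$ is injective: the root $v_0$ is placed somewhere, and at step $i$ the $d_i$ children of $v_i$ receive \emph{distinct} vertices of $U_i = V\setminus\varphi_s(\{v_0,\ldots,v_{\ch{i}-1}\})$, where, by the observation at the end of Section~\ref{sec:BFS}, $\{v_0,\ldots,v_{\ch{i}-1}\}$ is exactly the set of vertices of $T_s$ embedded so far in round $s$; since $K_n$ is complete, $\varphi_s$ therefore embeds $T_s$. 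Moreover, for each edge $uw$ added in step~\ref{item:step-children-images} we have $w\in N_i(\tau_i)$, hence $c_{uw}<t_{uw}$ just before it is added, so by the invariant $uw$ lies in no previously constructed embedding; consequently $\varphi_1(T_1),\ldots,\varphi_N(T_N)$ are pairwise edge-disjoint, which is precisely a packing of $T_1,\ldots,T_N$ into $K_n$.

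Finally, the label bound is immediate from the invariant: when the algorithm stops, an edge $e$ belongs to the packing if and only if $c_e=t_e$, so every such $e$ satisfies $t_e = c_e \le \max_{e} c_{e}$. The only point requiring real care is the case analysis verifying the clock invariant across step~\ref{item:step-end} --- in particular, checking that the update rule $\min\{t_e,\,c_e+(1-c_e)\tau_i\}$ behaves correctly both on edges already full (used by an earlier tree) and on edges still running --- and everything else is bookkeeping via the BFS-ordering properties of Section~\ref{sec:BFS} and Claim~\ref{claim:Nisone-size}.
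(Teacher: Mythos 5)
Your proposal is correct and follows essentially the same route as the paper's proof: injectivity of each $\varphi_s$ from step~\ref{item:varphi-injective}, edge-disjointness from the fact that a clock can ring only once (since $c_{uw}$ is updated to $t_{uw}$ or to $c_{uw}+(1-c_{uw})\tau_i$ after each step in which it runs), and the label bound from $c_e=t_e$ on every used edge. Your explicit clock invariant and the check via Claim~\ref{claim:Nisone-size} that $\tau_{s,i}$ is always well defined are just more detailed bookkeeping of the same argument.
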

\begin{proof}
  The description of the algorithm guarantees that each $\varphi_s$ is an injection, see~\ref{item:varphi-injective}. In particular, $\varphi_s$ is an embedding of $T_s$ into $K_n$. More importantly, an edge $uv$ of $K_n$ is used in the embedding if and only if $t_{uw}$ belongs to one of the intervals $\big(c_{uw}, c_{uw} + (1-c_{uw}) \tau_i\big]$. This can happen only once during the entire execution of the algorithm as at the end of each round where $uw$ was considered, $c_{uw}$ is increased to either $t_{uw}$ or $c_{uw} + (1-c_{uw})\tau_i$. The second assertion follows as at the end of the execution of the algorithm, $c_e = t_e$ for every edge $e$ used in the embedding.
\end{proof}

Therefore, it will be sufficient to show that
\begin{equation}
  \label{eq:main-pr-estimate}
  \Pr\left(\comp{\DD_N} \vee \max_e c_e > p\right) \le n^{-7},
\end{equation}
which we shall do in the remainder of this section. For each $e \in K_n$ and $s \in [N]$, let $\tau_{e,s}$ denote the total time that the clock associated with $e$ was running during round $s$ of the algorithm, disregarding the rate at which the clock was running. As the rate is never more than one, one easily sees that $c_e \le \tau_{e,1} + \ldots + \tau_{e,N}$ for each $e$ at the end of the algorithm. With view of this, we shall be interested in bounding the probability that $\max_e \tau_{e,1} + \ldots + \tau_{e,N}$ exceeds $p$. Eventually, a sufficiently strong bound on this probability will follow from Lemma~\ref{lemma:Bennett-plus}. Unfortunately, as the distributions of the random variables $\tau_{e,s}$ seem difficult to describe explicitly, we shall first need some preparations.

Given an $s \in [N]$ and $i \in J_s$, we shall refer to the execution of~\ref{item:step-beginning}--\ref{item:step-end} during round $s$ for this particular $i$ as \emph{step} $(s,i)$. For every pair of distinct $u, w \in V$, every $s$ and $i$ as above, let
\[
E_{i,u,w} = E_{i,u,w}^s = \big\{\text{$\varphi_s(v_i) = u$ and $w \in N_i^s(1)$}\big\}.
\]
In particular, one of $E_{i,u,w}^s$ and $E_{i,w,u}^s$ holds if and only if the clock associated with $uw$ is running when we are trying to embed the children of $v_i$ in round $s$. It is now easy to convince oneself that
\begin{equation}
  \label{eq:tau-uw-s}
  \tau_{uw,s} = \sum_{i \in J_s} (\indicator[E_{i,u,w}^s] + \indicator[E_{i,w,u}^s]) \cdot \tau_{s,i}.
\end{equation}
Moreover, as the events $\bigcup_{i \in J_s} \{E_{i,u,w}^s, E_{i,w,u}^s\}$ are pairwise disjoint, we also have
\begin{equation}
  \label{eq:tau-uw-s-squared}
  \tau_{uw,s}^2 = \sum_{i \in J} (\indicator[E_{i,u,w}^s] + \indicator[E_{i,w,u}^s]) \cdot \tau_{s,i}^2.
\end{equation}

Given an $s \in [N]$, let $\FF_s$ denote the $\sigma$-algebra generated by what happened in the algorithm by the start of round $s$. Moroever, given an $i \in J_s$, let $\FF_{s,i}$ denote the $\sigma$-algebra generated by what happened in the algorithm by the start of step $(s,i)$, that is, right before the children of the vertex $v_i$ are embedded. The following two key lemmas will allow us to use the representations~\eqref{eq:tau-uw-s} and~\eqref{eq:tau-uw-s-squared} to bound the (conditional) expectations of $\tau_{e,s}$ and $\tau_{e,s}^2$ for all $s$ and $e$.

\begin{lemma}
  \label{lemma:ex-clock-running-time}
  For every $s$ and $i \in J_s$ and every positive integer $k$, letting $d = d_i$ and $r = |N_i^s(1)|$,
  \[
  \Ex\big[\tau_{s,i}^k \mid \FF_{s,i}\big] = \prod_{j=1}^{k} \frac{d+j-1}{r+j},
  \]
  provided that $d \le r$. Moreover, conditioned on $\FF_{s,i}$, the sequence $(u_1, \ldots, u_d)$ defined in~\ref{item:step-children-images} of step $(s,i)$ is a uniform random $d$-element ordered subset of $N_i^s(1)$.
\end{lemma}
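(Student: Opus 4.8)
The plan is to carry out everything conditionally on $\FF_{s,i}$, the history of the algorithm up to the start of step $(s,i)$, and to show that, given this information, step $(s,i)$ amounts to the following: draw $r := |N_i^s(1)|$ independent $\mathrm{Uniform}[0,1]$ random variables, let $\tau_{s,i}$ be the $d$-th smallest of them (when $d := d_i \le r$), and let $(u_1,\dots,u_d)$ record, in increasing order of value, the indices attaining the $d$ smallest. Granting this reduction, the moment formula is the classical computation for order statistics — the $d$-th smallest of $r$ i.i.d.\ uniforms has the $\mathrm{Beta}(d,\,r-d+1)$ law, whose $k$-th moment is $\frac{\Gamma(d+k)\Gamma(r+1)}{\Gamma(d)\Gamma(r+k+1)}=\prod_{j=1}^{k}\frac{d+j-1}{r+j}$ — and the ``moreover'' part follows because the rank vector of $r$ i.i.d.\ continuous variables is a uniformly random permutation, independent of the multiset of values, so that the ordered tuple of indices of the $d$ smallest is uniform over ordered $d$-element subsets of $N_i^s(1)$.

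First I would record what $\FF_{s,i}$ determines. The only randomness in the algorithm consists of the edge labels $(t_e)$ and the uniformly random images of the various roots, and $\FF_{s,i}$ captures precisely the state of the algorithm at the start of step $(s,i)$: in particular it determines $u=\varphi_s(v_i)$, the set $U_i$, and, for every edge $e$, the current clock reading $c_e$ together with the fact that either $t_e=c_e$ (exactly when $e$ has already been used in the packing) or $t_e\in(c_e,1]$ (otherwise). In particular the set $N_i^s(1)=\{w\in U_i\colon c_{uw}<t_{uw}\}$ is $\FF_{s,i}$-measurable: it is exactly the set of $w\in U_i$ for which the edge $uw$ has not yet been used.

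The crux, and the step I expect to require the most care, is the following memorylessness statement: conditionally on $\FF_{s,i}$, the variables $\tau_w:=(t_{uw}-c_{uw})/(1-c_{uw})$ with $w$ ranging over $N_i^s(1)$ are independent and $\mathrm{Uniform}[0,1]$. The difficulty is that the clock readings $c_{uw}$ are themselves functions of the labels, so $\FF_{s,i}$ is a rich $\sigma$-algebra; however, the algorithm reveals each label only through a nested sequence of ``scans'' of intervals $(c_e^{\mathrm{old}},c_e^{\mathrm{new}}]$, so an as-yet-unused label $t_e$ is revealed only to lie in $(c_e,1]$, and these revealed events concern the labels individually while the root choices are independent of all labels. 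A clean way to make this precise is to expose the labels in the order dictated by the algorithm and to invoke repeatedly the elementary fact that conditioning independent uniform random variables on lying in prescribed (history-measurable) subintervals leaves them independent and uniform on those subintervals; rescaling $(c_{uw},1]$ onto $[0,1]$ then gives the claim.

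Once this is in hand, the proof closes quickly. For every $\tau\in[0,1]$ one has $N_i^s(\tau)=\{w\in N_i^s(1)\colon \tau_w\le\tau\}$, so when $d\le r$ the time $\tau_{s,i}=\min\{\tau\colon|N_i^s(\tau)|\ge d\}$ is exactly the $d$-th order statistic of the i.i.d.\ $\mathrm{Uniform}[0,1]$ family $(\tau_w)_{w\in N_i^s(1)}$, and $(u_1,\dots,u_d)$ lists the indices $w$ achieving the $d$ smallest values of $\tau_w$, in increasing order. The conditional $k$-th moment of $\tau_{s,i}$ is then the $\mathrm{Beta}(d,r-d+1)$ moment computed above, and the uniformity of $(u_1,\dots,u_d)$ over ordered $d$-subsets of $N_i^s(1)$ follows from the independence of the ranking and the values of i.i.d.\ continuous random variables. (On the event $\DD_{s-1}$ the hypothesis $d\le r$ holds automatically by Claim~\ref{claim:Nisone-size}.)
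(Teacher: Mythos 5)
Your proposal is correct and follows essentially the same route as the paper's proof: reduce to the observation that, conditioned on $\FF_{s,i}$, the unrung labels $t_{uw}$ for $w\in N_i^s(1)$ are independent and uniform on $(c_{uw},1]$, identify $\tau_{s,i}$ with the $d$th order statistic of $r$ i.i.d.\ uniforms, compute the $k$th moment via the Beta integral, and get the ``moreover'' part by symmetry of the ranks. Your treatment of the memorylessness step is somewhat more explicit than the paper's one-sentence justification, but the argument is the same.
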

\begin{proof}
  Observe first that conditioned on the clock at $e$ not having rung until $c_e$, the random variable $t_e$ is uniformly distributed on the interval $(c_e, 1]$. Therefore, conditioned on $\FF_{s,i}$, the variable $\tau_{s,i}$ has the same distribution as the $d$th smallest value among $r$ independent uniform $[0,1]$-valued random variables\footnote{This is often referred to as the \emph{$d$th order statistic}.}. Denote this random variable by $\tau$. The probability density function of $\tau$ is $t \mapsto d\binom{r}{d}t^{d-1}(1-t)^{r-d}$ and hence
  \[
  \begin{split}
    \Ex[\tau^k] &= d \binom{r}{d} \int_0^1 t^{d+k-1}(1-t)^{r-d} \, dt = d \binom{r}{d} B(d+k,r+1-d) \\
    & = \frac{r!}{(d-1)!(r-d)!} \cdot \frac{(d+k-1)!(r-d)!}{(r+k)!} = \prod_{j=1}^k \frac{d+j-1}{r+j},
  \end{split}
  \]
  where $B \colon \mathbb{Z}_+^2 \to \mathbb{R}$ is the Euler beta function, which is defined by
  \[
  B(x,y) = \int_0^1 t^{x-1} (1-t)^{y-1} \, dt = \frac{(x-1)!(y-1)!}{(x+y-1)!}.
  \]
  The second part of the lemma follows by symmetry.
\end{proof}

The second lemma, which is really the heart of the argument, provides upper bounds on the (conditional) probabilities of the events $E_{i,u,w}^s$ that appear in~\eqref{eq:tau-uw-s} and~\eqref{eq:tau-uw-s-squared}. Let $\delta = 21p / \alpha^4$ and note that
\begin{equation}
  \label{eq:exp-delta}
  e^\delta \ge 1+ \delta \ge 1+21p \ge \frac{n}{n-1} \qquad \text{and} \qquad e^{2\delta} \le 1 + 3\delta \le 1 + 63p/\alpha^4 \le 1+\eps/2.
\end{equation}

\begin{lemma}
  \label{lemma:uniform-dist}
  For every pair of distinct $u, w \in V$ and all $s \in [N]$ and $i \in J_s$, the following holds.
  \begin{equation}
    \label{eq:uniform-dist}
    \Pr\big( E_{i,u,w}^s \wedge \DD_{s-1} \mid \FF_s \big) \le \frac{n-\ch{i}}{n^2} \cdot e^\delta.
  \end{equation}
\end{lemma}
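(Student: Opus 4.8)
The plan is to analyze, conditionally on $\FF_s$, how the vertex $\varphi_s(v_i)$ and the running-clock set $N_i^s(1)$ come to be during round $s$, and to bound the probability of the joint event $E_{i,u,w}^s \wedge \DD_{s-1}$ by tracking the clock value $c_{uw}$ at the start of round $s$. First I would fix $\FF_s$ and hence the starting clock values $(c_e)$ for all edges $e$; on the event $\DD_{s-1}$ we know $\max_e c_e \le \tau_{e,1}+\dots+\tau_{e,s-1}$ is small, and more precisely we will need that $\sum_{s'<s}\Ex[\tau_{uw,s'}\mid\cdots]$ is controlled — but actually the cleanest route is to bound $c_{uw}$ directly using the definition of $\delta$ and the assumption $p\le\eps\alpha^4/126$, so that $1-c_{uw}\ge e^{-\delta}\cdot(\text{something})$ or, more simply, so that the factor $e^\delta$ in~\eqref{eq:uniform-dist} absorbs the discrepancy between $c_{uw}$ being zero and being positive. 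The key structural observation is that, by the second part of Lemma~\ref{lemma:ex-clock-running-time}, each time we embed children of some $v_j$ in round $s$ their images form a uniform random ordered subset of the currently-available running-clock set; iterating this down the BFS ordering, the image $\varphi_s(v_i)=u$ occurs with probability close to $1/n$ (it is $1/n$ for $v_0$, and conditionally uniform among the unused vertices thereafter), and given $\varphi_s(v_i)=u$, the event $w\in N_i^s(1)$ means simply that the edge $uw$'s clock has not yet rung, i.e.\ $c_{uw}<t_{uw}$, which conditionally on everything has probability $1-c_{uw}\le 1$.

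The main steps, in order, are: (i) condition on $\FF_s$ and write $\Pr(E_{i,u,w}^s\wedge\DD_{s-1}\mid\FF_s) = \Pr(\varphi_s(v_i)=u,\ w\in N_i^s(1),\ \DD_{s-1}\mid\FF_s)$, and decompose this over the ancestral path of $v_i$ in $T_s$; (ii) show that $\Pr(\varphi_s(v_i)=u\mid\FF_s,\DD_{s-1})\le \frac{1}{n-\ch{i}}\cdot(1+o(1))$, using Lemma~\ref{lemma:ex-clock-running-time}'s uniformity statement and Claim~\ref{claim:Nisone-size} to guarantee that at every step along the way the relevant set $N_j^s(1)$ is large enough (at least $\alpha n-2np\ge d_j$, so the embedding does not abort for lack of room) and that $u$ is a uniformly-chosen element among roughly $n-\ch{j}$ candidates at the step embedding the ancestor at depth corresponding to $v_i$ — more carefully, the image of $v_i$ is one of the $|U_i|=n-\ch{i}$ unused vertices and is chosen uniformly at random from the running-clock set at the step where $v_i$'s parent's children are embedded, so a product-of-conditional-probabilities telescoping gives the bound $(n-\ch{i})/n^2$ times a correction; (iii) on the event $\varphi_s(v_i)=u$, bound $\Pr(w\in N_i^s(1)\mid\cdots)= 1-c_{uw} \le 1$, and more importantly bound the accumulated factors: each conditional uniform choice is over a set of size $|N_j^s(1)|\ge |U_j|-2np = n-\ch{j}-2np$ rather than exactly $n-\ch{j}$, and the ratio $(n-\ch{j})/(n-\ch{j}-2np) \le n/(n-1)\le e^\delta$ by~\eqref{eq:exp-delta} since $n-\ch{j}\ge\alpha n$ and $2np\le \alpha n\cdot 21p/\alpha\le\dots$; collecting one such factor (or a bounded number that multiply to at most $e^\delta$) yields exactly the claimed $\frac{n-\ch{i}}{n^2}e^\delta$.

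The hard part will be step (ii)/(iii): correctly bookkeeping the product of conditional probabilities along the BFS history so that the losses from (a) $N_j^s(1)$ being a strict subset of $U_j$ and (b) the clocks $c_{uw}$ being already positive at the start of round $s$ telescope into a single clean factor $e^\delta$ rather than something that grows with the depth of $v_i$. The point is that although there are potentially many ancestors of $v_i$, the probability that $v_i$ lands on the specific vertex $u$ is governed by just one uniform choice (the step embedding $v_i$ together with its siblings), with all earlier choices only constraining which vertices remain available; so the product does not actually telescope over depth but instead collapses, and the sole multiplicative slack is the single ratio $|U_i|/|N^s_{\mathrm{parent}(i)}(1)|$ bounded via~\eqref{eq:exp-delta}. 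I would also need the observation, recorded just before Section~\ref{sec:BFS} ends, that $\bigcup_{j<i,j\in J}(\{v_j\}\cup N_T(v_j)) = \{v_0,\dots,v_{\ch{i}-1}\}$, which is exactly what makes $|U_i| = n-\ch{i}$ and lets the counting go through cleanly. Once the probability that $\varphi_s(v_i)=u$ is pinned at $\le \frac{1}{n}\cdot\frac{n-\ch{i}}{n-1}\le \frac{n-\ch{i}}{n^2}e^\delta$ and the extra event $w\in N_i^s(1)$ contributes a factor $\le 1$, we are done.
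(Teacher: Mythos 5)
Your setup is right---condition on $\FF_s$, use the uniformity statement of Lemma~\ref{lemma:ex-clock-running-time} at each step of the BFS ordering, and multiply conditional probabilities along the history of round $s$---but the proposal breaks down at exactly the point that is the main difficulty of the lemma. Two concrete problems. First, your accounting of where the factor $n-\ch{i}$ comes from is wrong: the probability that $\varphi_s(v_i)=u$ is (approximately) $\frac{1}{n-1}$, not $\frac{1}{n}\cdot\frac{n-\ch{i}}{n-1}$, because the telescoping product of the probabilities that $u$ survives unused up to step $j_i$ (roughly $\frac{n-\ch{j_i}}{n-1}$) cancels against the $\frac{1}{|N_{j_i}^s(1)|}\approx\frac{1}{n-\ch{j_i}}$ coming from the uniform choice at the parent's step. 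The factor $\frac{n-\ch{i}}{n-1}$ that the target bound requires comes from the event $w\in U_i^s$, i.e.\ from $w$ being avoided at every step $j<i$---precisely the event you propose to bound by $1$. Bounding it by $1$ gives only $\Pr(E_{i,u,w}^s\mid\FF_s)\lesssim \frac{1}{\alpha n}$, which exceeds the target $\frac{n-\ch{i}}{n^2}e^\delta$ by a factor of order $1/\alpha^2$ when $T_s$ is close to spanning.

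Second, and more seriously: at each step $j<i$ the avoidance probability is $\bigl(1-d_j/|N_j^s(1)|\bigr)$ raised to the number of elements of $\{u,w\}$ lying in $N_j^s(1)$, not in $U_j^s$. Whenever $\varphi_s(v_j)$ happens to be an $H$-neighbour of $u$ or $w$ (where $H=\varphi_1(T_1)\cup\dots\cup\varphi_{s-1}(T_{s-1})$), the exponent drops and you lose a multiplicative factor of roughly $e^{d_j/(n-\ch{j})}$. There is one such potential loss at every ancestor step, and deterministically these can accumulate to $e^{O(\sum_j d_j/(n-\ch{j}))}=\alpha^{-O(1)}$ by~\eqref{eq:sum-degvk-nkk}, vastly exceeding $e^\delta=e^{21p/\alpha^4}$. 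So your assertion that ``the sole multiplicative slack is the single ratio $|U_i|/|N^s_{\mathrm{parent}(i)}(1)|$'' is false as a deterministic statement. The only way to rescue the bound is to exploit that each event $\varphi_s(v_j)\in N_H(u)\cup N_H(w)$ has conditional probability only $O(np/(\alpha n))$, so that the error factors are close to $1$ in conditional expectation; making this rigorous requires carrying the product of the avoidance indicators and the exponential error terms through a step-by-step conditional-expectation induction down the BFS order (this is the role of the random variables $X_I$ and $Z_{k,I}$ and of Claims~\ref{claim:Ex-Aj-XI} and~\ref{claim:Ex-ZkI-bound} in the actual proof). This mechanism is entirely absent from your proposal, and it is the heart of the lemma.
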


The proof of Lemma~\ref{lemma:uniform-dist} is quite technical and therefore we postpone it to the end of the section. Before proceeding with our proof, it would be useful to understand the intuition behind~\eqref{eq:uniform-dist}. By the description of our embedding algorithm, $\varphi(v_0)$ is a uniformly chosen random vertex in $V$. Moreover, by Lemma~\ref{lemma:ex-clock-running-time}, for every $j \in J$, conditioned on $\varphi_s(v_0), \ldots, \varphi_s(v_{\ch{j}-1})$, the images of the $d_j$ children of $v_j$ form a uniform random $d_j$-element ordered subset of $N_j^s(1)$. It follows that if $N_j^s(1) = U_j^s$ for every $j \in J$, then $\varphi_s(v_0), \ldots, \varphi_s(v_{m-1})$ would form a uniform random $m$-element ordered subset of~$V$. In particular, $\Pr\big(E_{i,u,w}^1 \mid \FF_1\big) = \frac{n-\ch{i}}{n(n-1)} \le \frac{n-\ch{i}}{n^2} \cdot e^\delta$.

Unfortunately, this is true only if $s = 1$, as in reality $N_j^s(1)$ contains only those vertices $w$ of $U_j^s$ for which the edge $\{\varphi_s(v_j),w\}$ has not already appeared in $H = \varphi_1(T_1) \cup \ldots \cup \varphi_{s-1}(T_{s-1})$. Clearly, $|U_j^s \setminus N_j^s(1)| \le \Delta(H)$ for every $j$ and hence one would expect that if $\Delta(H)$ is not too large, then the distribution of $\varphi_s(v_0), \ldots, \varphi_s(v_{m-1})$ is not very far from uniform. The content of~\eqref{eq:uniform-dist} is that the above intuition is indeed true (in some precise quantitative sense).

For every $s \in [N]$ and every $e \in E(K_n)$, let $c_e^s$ denote the value of $c_e$ at the end of the $s$th round of the algorithm (so that $c_e = c_e^N$). Similarly as before, one easily sees that $c_e^s \le \tau_{e,1} + \ldots + \tau_{e,s}$. As $\DD_0$ holds always, our main probabilistic estimate, inequality~\eqref{eq:main-pr-estimate}, will easily follow from the following statement.

\begin{lemma}
  \label{lemma:main-pr-estimate-step}
  For every $\smax \in [N]$, the following holds:
  \begin{equation}
    \label{eq:main-pr-estimate-step}
    \Pr\left(\DD_{\smax-1} \text{ and } \left(\max_e c_e^\smax > p \text{ or }  \comp{\DD_\smax}\right)\right) \le 3n^{-8}.
  \end{equation}
\end{lemma}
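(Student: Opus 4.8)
The plan is to split the bad event into an \emph{edge-capacity} part and a \emph{degree-blow-up} part and to control each by applying Lemma~\ref{lemma:Bennett-plus} edge-by-edge, respectively vertex-by-vertex. For the reduction, recall that on $\DD_{\smax-1}$ one has $c_e^{\smax}\le\tau_{e,1}+\dotsb+\tau_{e,\smax}$ for every edge $e$, and that, since the embeddings $\varphi_1,\dotsc,\varphi_{\smax}$ are pairwise edge-disjoint, $\comp{\DD_\smax}\cap\DD_{\smax-1}$ forces some vertex $v$ to satisfy $\deg_H(v)>2np-\Delta$, where $H:=\varphi_1(T_1)\cup\dotsb\cup\varphi_{\smax-1}(T_{\smax-1})$, because $v$'s degree can increase by at most $\Delta(T_{\smax})\le\Delta$ in round $\smax$. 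Hence it suffices to prove that $\Pr\big(\DD_{\smax-1}\text{ and }\sum_{s=1}^{\smax}\tau_{e,s}>p\text{ for some }e\big)\le 2n^{-8}$ and $\Pr\big(\DD_{\smax-1}\text{ and }\deg_H(v)>2np-\Delta\text{ for some }v\big)\le n^{-8}$.

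For the first inequality, fix an edge $e=uw$ and set $Y_s:=\tau_{uw,s}\cdot\indicator[\DD_{s-1}]$ for $s\in[\smax]$; thus $Y_s$ is measurable with respect to the $\sigma$-algebra generated by rounds $1,\dotsc,s$, and on $\DD_{\smax-1}$ one has $\sum_s Y_s=\sum_s\tau_{uw,s}$. Using~\eqref{eq:tau-uw-s} and~\eqref{eq:tau-uw-s-squared}, first conditioning on $\FF_{s,i}$ and invoking Lemma~\ref{lemma:ex-clock-running-time} (with $k=1,2$) and Claim~\ref{claim:Nisone-size} (which gives $|N_i^s(1)|\ge n-\ch{i}-2np\ge(1-2p/\alpha)(n-\ch{i})$ on $\DD_{s-1}$), and then conditioning on $\FF_s$ and applying Lemma~\ref{lemma:uniform-dist}, one obtains
\[
\Ex[Y_s\mid\FF_s]\le\frac{2e^{\delta}}{n^2(1-2p/\alpha)}\sum_{i\in J_s}d_i,\qquad
\Ex[Y_s^2\mid\FF_s]\le\frac{4e^{\delta}}{n^2(1-2p/\alpha)^2}\sum_{i\in J_s}\frac{d_i^2}{n-\ch{i}}.
\]
Since $\sum_{i\in J_s}d_i=v(T_s)-1\le(1-\alpha)n$, and since $d_i^2\le\Delta d_i$ together with the telescoping estimate $\sum_{i\in J_s}\frac{d_i}{n-\ch{i}}\le\int_0^{v(T_s)-1}\frac{dx}{n-1-x}=\log\frac{n-1}{n-v(T_s)}\le\log(1/\alpha)$ — legitimate because, by properties~\ref{item:T-ordering-1}--\ref{item:T-ordering-2} of the BFS ordering, the values $\ch{i}$ for $i\in J_s$ in increasing order are $1,\,1+d_{i_0},\,1+d_{i_0}+d_{i_1},\dotsc$ — we get uniform bounds $\Ex[Y_s\mid\FF_s]\le\mu:=\frac{2e^{\delta}(1-\alpha)}{n(1-2p/\alpha)}$ and $\Ex[Y_s^2\mid\FF_s]\le\sigma^2:=\frac{4e^{\delta}\Delta\log(1/\alpha)}{n^2(1-2p/\alpha)^2}$. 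Now~\eqref{eq:exp-delta} and $N\le(1-\eps)np/2$ give $N\mu\le(1-\tfrac23\eps)p$, so with $t:=\tfrac23\eps p$ we have $\smax\mu+t\le p$; and the bounds on $\Delta$ in~\eqref{eq:main-assumptions}, through $\Delta\log(1/\alpha)\le\frac{\eps^2np}{1600\log n}$, give $N\sigma^2=O\big(\eps^2p^2/\log n\big)$.

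It remains to supply the almost-sure bound $M$ required by Lemma~\ref{lemma:Bennett-plus}: a single clock's running time $\tau_{s,i}$ is typically of order $\Delta/(\alpha n)$ but can be as large as $1$. By Lemma~\ref{lemma:ex-clock-running-time}, on $\DD_{s-1}$ the variable $\tau_{s,i}$ is the $d_i$-th smallest of $|N_i^s(1)|\ge\tfrac12\alpha n$ i.i.d.\ uniform $[0,1]$ variables, so a standard Chernoff estimate yields $\Pr(\tau_{s,i}>M_0\mid\FF_{s,i})\le n^{-10}$ on $\DD_{s-1}$ for $M_0:=C(\Delta+\log n)/(\alpha n)$ with $C$ absolute; crucially, the hypothesis $p\ge 150(\log n)^2/(\alpha\eps n)$ together with the bound on $\Delta$ makes $M_0=O(\eps p/\log n)$ and hence $M_0t/3=O(\eps^2p^2/\log n)$. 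A union bound over the at most $n^2$ pairs $(s,i)$ with $s\le\smax$, $i\in J_s$, shows that outside an event of probability $n^{-8}$ we have $Y_s=\min\{Y_s,M_0\}=:\widetilde Y_s$ for every $e$ and $s$. Applying Lemma~\ref{lemma:Bennett-plus} to $\widetilde Y_1,\dotsc,\widetilde Y_\smax$ with respect to the round filtration (permissible since it refines $\sigma(\widetilde Y_1,\dotsc,\widetilde Y_{s-1})$), and using $N\sigma^2+M_0t/3=O(\eps^2p^2/\log n)$ against $t^2=\tfrac49\eps^2p^2$, one gets $\Pr\big(\sum_{s\le\smax}\widetilde Y_s\ge\smax\mu+t\big)\le n^{-10}$ provided the numerical constants in~\eqref{eq:main-assumptions} are chosen so that the resulting exponent exceeds $10\log n$; a union bound over the at most $n^2$ edges establishes the edge-capacity estimate.

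The degree-blow-up estimate follows from the same scheme with $Z_s:=\deg_{\varphi_s(T_s)}(v)\cdot\indicator[\DD_{s-1}]=\indicator[\DD_{s-1}]\sum_j\deg_{T_s}(v_j)\indicator[\varphi_s(v_j)=v]$ for a fixed $v$. By Lemma~\ref{lemma:ex-clock-running-time} the image of any child of $v_i$ is, conditionally on $\FF_{s,i}$, a uniform element of $N_i^s(1)$; combining this with Claim~\ref{claim:Nisone-size} and Lemma~\ref{lemma:uniform-dist} gives $\Pr(\varphi_s(v_j)=v,\ \DD_{s-1}\mid\FF_s)\le e^{\delta}/\big(n(1-2p/\alpha)\big)$ for every $j$, whence, via $\sum_j\deg_{T_s}(v_j)=2(v(T_s)-1)$ and $\deg_{T_s}(v_j)^2\le\Delta\deg_{T_s}(v_j)$, the uniform bounds $\Ex[Z_s\mid\FF_s]\le\frac{2e^{\delta}}{1-2p/\alpha}$ and $\Ex[Z_s^2\mid\FF_s]\le\frac{2e^{\delta}\Delta}{1-2p/\alpha}$, while $Z_s\le\Delta$ always. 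Since $\sum_{s<\smax}\Ex[Z_s\mid\FF_s]\le(1-\tfrac23\eps)np\le 2np-\Delta-0.9np$, Lemma~\ref{lemma:Bennett-plus} with deviation $t'=0.9np$, using $\Delta\le\frac{\alpha\eps np}{1600\log n}$ to make both $N\cdot\frac{2e^{\delta}\Delta}{1-2p/\alpha}$ and $\Delta t'/3$ much smaller than $t'^2/(10\log n)$, yields $\Pr(\deg_H(v)>2np-\Delta,\ \DD_{\smax-1})\le n^{-9}$, and a union bound over $v$ completes the proof. The most delicate points are the second-moment computation in the edge part — in particular the telescoping estimate $\sum_{i\in J_s}\frac{d_i}{n-\ch{i}}\le\log(1/\alpha)$, which is the only place where the factor $\eps/\log(1/\alpha)$ in the hypothesis on $\Delta$ is used — and the truncation reconciling Bennett's almost-sure bound with the heavy upper tail of $\tau_{s,i}$ (the reason the hypothesis $p\ge 150(\log n)^2/(\alpha\eps n)$ enters); tracking the numerical constants so that every Bennett exponent comes out at least $10\log n$ is the bulk of the remaining work.
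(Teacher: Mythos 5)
Your edge-capacity argument is essentially the paper's proof: the same first- and second-moment computations via~\eqref{eq:tau-uw-s}, \eqref{eq:tau-uw-s-squared}, Lemmas~\ref{lemma:ex-clock-running-time} and~\ref{lemma:uniform-dist}, the same telescoping estimate~\eqref{eq:sum-degvk-nkk}, and the same truncation-before-Bennett device (the paper truncates at $\taumax = \eps p/(60\log n)$ and controls $\Pr(\DD_{\smax-1}\wedge\tau_{s,i}>\taumax)$ by Markov's inequality applied to the $k$-th moment with $k\approx 10\log_2 n$, rather than your Chernoff bound for the order statistic; both exploit $p\ge 150(\log n)^2/(\alpha\eps n)$ identically). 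Where you genuinely diverge is the degree-blow-up part. The paper disposes of $\comp{\DD_\smax}$ in one line: every edge used by the packing satisfies $t_f\le\max_e c_e^{\smax}$, so on the event $\max_e c_e^{\smax}\le p$ the packed graph is a subgraph of $\Gnp$, and $\Pr(\Delta(\Gnp)>2np)\le n e^{-np/4}\le n^{-8}$ — no per-vertex martingale analysis is needed. Your route (a second Bennett argument for $Z_s=\deg_{\varphi_s(T_s)}(v)\cdot\indicator[\DD_{s-1}]$) also works and is self-contained, but it costs you an extra ingredient: the bound $\Pr(\varphi_s(v_j)=v\wedge\DD_{s-1}\mid\FF_s)\le e^\delta/(n(1-2p/\alpha))$ is not literally a statement of Lemma~\ref{lemma:uniform-dist}, whose events $E^s_{i,u,w}$ involve a \emph{pair} of vertices; you need to extract it, e.g.\ by summing $\Pr(E^s_{i,u,w}\wedge\DD_{s-1}\mid\FF_s)$ over the second vertex $w$ and dividing by $|N_i^s(1)|\ge n-\ch{i}-2np$ (and handling leaves $v_j$, $j\notin J_s$, through their parent). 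That step should be spelled out. Two minor constant slips, consistent with the bookkeeping you explicitly defer: with $\mu=\frac{2e^\delta(1-\alpha)}{n(1-2p/\alpha)}$ one only gets $N\mu\le(1-\eps/2)p$ in general, so your choice $t=\tfrac23\eps p$ does not satisfy $\smax\mu+t\le p$ for small $\eps$ unless $\alpha\gtrsim\eps$; take $t=\eps p/4$ as the paper does. Neither issue affects the structure of the argument.
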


Indeed, as the event $\DD_0$ holds trivially, we have $\comp{\DD_N} = \bigcup_{s=1}^N \comp{\DD_s} \cap \DD_{s-1}$ and hence by Lemma~\ref{lemma:main-pr-estimate-step},
\begin{equation}
  \label{eq:Pr-DD-N}
  \Pr\left( \comp{\DD_N} \right) \le \sum_{s=1}^N \Pr\left( \DD_{s-1} \cap \comp{\DD_s} \right) \le N \cdot 3n^{-8}
\end{equation}
Since $\DD_N$ clearly implies $\DD_{N-1}$ and $c_e = c_e^N$, then again by Lemma~\ref{lemma:main-pr-estimate-step},
\begin{equation}
  \label{eq:Pr-max-ce}
  \Pr\left( \DD_N \wedge \max_e c_e > p\right) \le \Pr\left( \DD_{N-1} \wedge \max_e c_e^N > p\right) \le 3n^{-8}.
\end{equation}
Finally, \eqref{eq:Pr-DD-N} and~\eqref{eq:Pr-max-ce} immediately give~\eqref{eq:main-pr-estimate}.

\begin{proof}[{Proof of Lemma~\ref{lemma:main-pr-estimate-step}}]
  Fix an $\smax \in [N]$ and an $e \in E(K_n)$ and recall that $c_e^\smax \le \tau_{e,1} + \ldots + \tau_{e,\smax}$. As a preparation to invoke Lemma~\ref{lemma:Bennett-plus}, we first estimate, for each $s \in [\smax]$ and each pair $uw \in E(K_n)$, the conditional expectations of $\tau_{uw,s}$ and $\tau_{uw,s}^2$ given $\FF_s$ (on the event $\DD_{s-1} \supseteq \DD_{\smax-1}$). To this end, recall first that (i) the events $E_{i,u,w}^s$, $E_{i,w,u}^s$, and $\DD_{s-1}$ are all in $\FF_{s,i}$ and (ii) on the event $\DD_{s-1}$, the set $N_i^s(1)$ has at least $n - \ch{i} - 2np$ elements, see Claim~\ref{claim:Nisone-size}. It now follows from~\eqref{eq:tau-uw-s} and Lemmas~\ref{lemma:ex-clock-running-time} and~\ref{lemma:uniform-dist} that
\[
\begin{split}
  \Ex\left[ \tau_{uw,s} \cdot \indicator[\DD_{s-1}] \mid \FF_s \right] & = \sum_{i \in J_s} \Ex\left[ \left( \indicator[E_{i,u,w}^s] + \indicator[E_{i,w,u}^s] \right) \cdot \tau_{s,i} \cdot \indicator[\DD_{s-1}] \mid \FF_s \right] \\
    & = \sum_{i \in J_s} \Ex\left[ \left( \indicator[E_{i,u,w}^s] + \indicator[E_{i,w,u}^s] \right) \cdot \Ex\left[ \tau_{s,i} \mid \FF_{s,i} \right] \cdot \indicator[\DD_{s-1}] \mid \FF_s \right] \\
    & \le \sum_{i \in J_s} \frac{d_i}{n-\ch{i}-2np} \cdot \left( \Pr\left( E_{i,u,w}^s \wedge \DD_{s-1} \mid \FF_s \right) + \Pr\left( E_{i,w,u}^s \wedge \DD_{s-1} \mid \FF_s \right) \right) \\
    & \le \sum_{i \in J_s} \frac{d_i}{n-\ch{i}-2np} \cdot \frac{n-\ch{i}}{n^2} \cdot 2e^\delta \le \frac{2e^\delta}{n^2} \cdot \frac{\alpha n}{\alpha n - 2np} \cdot \sum_{i \in J_s} d_i \\
    & = \frac{2e^\delta}{n^2} \cdot \frac{\alpha}{\alpha - 2p} \cdot |E(T_s)| \le \frac{2}{n} \cdot e^{\delta + \frac{2p}{\alpha - 2p}} \le \frac{2}{n} \cdot e^{2\delta} \le \frac{2+\eps}{n},
\end{split}
\]
where the final inequality is~\eqref{eq:exp-delta}. In a similar fashion, it follows from~\eqref{eq:tau-uw-s-squared} that
\[
\begin{split}
  \Ex\left[ \tau_{uw,s}^2 \cdot \indicator[\DD_{s-1}] \mid \FF_s \right] & \le \sum_{i \in J_s} \frac{d_i(d_i+1)}{(n-\ch{i}-2np)^2} \cdot \frac{n-\ch{i}}{n^2} \cdot 2e^\delta \le \frac{4e^\delta}{n^2} \cdot \sum_{i \in J_s} \frac{d_i^2}{n-\ch{i}} \cdot \left(\frac{\alpha}{\alpha-2p}\right)^2 \\
  & \le \frac{4e^{2\delta}}{n^2} \cdot \sum_{i \in J_s} \frac{d_i^2}{n-\ch{i}} \le \frac{5}{n^2} \cdot \Delta(T_s) \cdot \sum_{i \in J_s} \frac{d_i}{n-\ch{i}} \le \frac{5 \Delta \log (1/\alpha)}{n^2},
\end{split}
\]
where the final inequality is~\eqref{eq:sum-degvk-nkk}. Furthermore, let
\[
\taumax = \frac{\eps p}{60\log n}
\qquad \text{and} \qquad
\taunorm_{uw,s} = \min\{ \tau_{uw,s}, \taumax\} \cdot \indicator[\DD_{s-1}].
\]
Observe that for each pair $uw \in E(K_n)$, the random variables $\taunorm_{uw,1}, \ldots, \taunorm_{uw,\smax}$ satisfy the assumptions of Lemma~\ref{lemma:Bennett-plus} with
\[
M \leftarrow \taumax, \qquad \mu \leftarrow \frac{2+\eps}{n}, \qquad \text{and} \qquad \sigma^2 \leftarrow \frac{5\Delta \log (1/\alpha)}{n^2}.
\]
In particular, letting $t = \eps p / 4$, we see that
\[
p - \smax\mu \ge p - N\mu \ge p - (1-\eps) \frac{np}{2} \cdot \frac{2+\eps}{n} = \left(1 - \frac{(1-\eps)(2+\eps)}{2}\right) p = \frac{\eps - \eps^2}{2}p \ge t
\]
and also
\[
2 s' \sigma^2 \le 2 N \sigma^2 \le np \cdot \frac{5 \Delta \log (1/\alpha)}{n^2} = \frac{5p \Delta \log (1/\alpha)}{n} \le \frac{\eps^2 p^2}{320 \log n} \qquad \text{and} \qquad \frac{2M t}{3} \le \frac{\eps^2 p^2}{320\log n}.
\]
Therefore, it follows from Lemma~\ref{lemma:Bennett-plus} that
\begin{equation}
  \label{eq:Pr-sum-taunorm}
  \Pr\left(\taunorm_{uw,1} + \ldots + \taunorm_{uw,\smax} > p\right) \le \exp\left(- \frac{t^2}{2(\smax\sigma^2 + Mt/3)} \right) \le n^{-10}.
\end{equation}

Now, observe that if $\tau_{e,s} = \taunorm_{e,s}$ for all $s \in [\smax]$ and $e \in E(K_n)$, then $c_e^\smax \le \taunorm_{e,1} + \ldots + \taunorm_{e,\smax}$. Since each $\tau_{e,s}$ equals either zero or $\tau_{s,i}$ for some $i \in J_s$, the former event holds precisely when $\tau_{s,i} \le \taumax$ for all $s \in [\smax]$ and $i \in J_s$. In particular, it follows from~\eqref{eq:Pr-sum-taunorm}, \eqref{eq:Pr-sum-taunorm}, and the union bound that
\begin{equation}
  \label{eq:Pr-ce-large-on-DDs}
  \Pr\left( \DD_{\smax-1} \wedge \max_e c_e^\smax > p \right) \le n^{-8} + \sum_{s =1}^{\smax} \sum_{i \in J_s} \Pr( \DD_{\smax-1} \wedge \tau_{s,i} > \taumax).
\end{equation}
In order to estimate the right-hand side of~\eqref{eq:Pr-ce-large-on-DDs}, note that $\DD_{\smax-1} \subseteq \DD_{s-1}$ for every $s \in [\smax]$ and hence by Claim~\ref{claim:Nisone-size}, on $\DD_{\smax-1}$, the set $N_i^s(1)$ has at least $\alpha n - 2np$ elements. Therefore, by Lemma~\ref{lemma:ex-clock-running-time}, for every $s$ and $i$ as above and every positive integer $k$, using Markov's inequality,
\begin{equation}
  \label{eq:Pr-tau-taumax}
  \begin{split}
    \Pr\left(\DD_{\smax-1} \wedge \tau_{s,i} > \taumax \right) & \le \Pr\left(\DD_{s-1} \wedge \tau_{s,i}^k > \taumax^k \right) \le \taumax^{-k} \cdot \Ex\left[\tau_{s,i}^k \cdot \indicator[\DD_{s-1}]\right] \\
    & \le \taumax^{-k} \prod_{j=1}^k \frac{d_i+j-1}{\alpha n - 2np + j} \le \left( \frac{\Delta+k}{\taumax \cdot (\alpha-2p)n} \right)^k \\
    & \le \left(\frac{120(\Delta+k)\log n}{\eps \alpha n p}\right)^k \le \left(\frac{1}{4} + \frac{k \log n}{\eps \alpha n p}\right)^k.
  \end{split}
\end{equation}
Substituting $k = \lceil 10 \log_2 n \rceil \le \eps \alpha np / (4\log n)$ into~\eqref{eq:Pr-tau-taumax} yields
\[
\Pr\left(\DD_{\smax-1} \wedge \tau_{s,i} > \taumax \right) \le n^{-10},
\]
which together with~\eqref{eq:Pr-ce-large-on-DDs} gives
\begin{equation}
  \label{eq:Pr-ce-large-on-DDs-final}
    \Pr\left( \DD_{\smax-1} \wedge \max_e c_e^\smax > p \right) \le n^{-8} + \smax \cdot n \cdot n^{-10} \le 2n^{-8}.
\end{equation}

Finally, we estimate the probability of $\DD_{\smax-1} \wedge \max_e c_e^\smax \le p \wedge \comp{\DD_{\smax}}$. To this end, note that the graph $\varphi_1(T_1) \cup \ldots \cup \varphi_\smax(T_\smax)$ is contained in the graph comprising all edges $f$ with $t_f \le \max_e c_e^\smax$. In particular,
\begin{equation}
  \label{eq:Pr-ce-small-not-DDs}
  \Pr\left(\comp{\DD_{\smax}} \wedge \max_e c_e^\smax \le p \right) \le \Pr\left( \Delta(G_{n,p}) > 2np \right) \le n \cdot \exp\left(- np / 4\right),
\end{equation}
where the last inequality is a standard estimate for the upper tail of the binomial distribution that can be easily derived using Lemma~\ref{lemma:Bennett-plus}. As $np/4 \ge 9\log n$, inequalities~\eqref{eq:Pr-ce-large-on-DDs-final} and~\eqref{eq:Pr-ce-small-not-DDs} immediately yield~\eqref{eq:main-pr-estimate-step}.
\end{proof}

In order to complete the proof we shall now prove Lemma ~\ref{lemma:uniform-dist}.

\begin{proof}[{Proof of Lemma~\ref{lemma:uniform-dist}}]
  We first handle the easy case $i = 0$. Since conditioned on $\FF_s$, the vertex $\varphi_s(v_0)$ is chosen uniformly at random from $V$, then
  \[
  \Pr(E_{0,u,w}^s \mid \FF_s) \le \Pr(\varphi_s(v_0) = u\mid \FF_s) = \frac{1}{n} \le \frac{n-1}{n^2} \cdot e^\delta,
  \]
  where the last inequality follows form~\eqref{eq:exp-delta}. Therefore, for the remainder of the proof, we shall assume that $i > 0$. Let $j_i$ be the index of the parent of $v_i$ in $T_s$, so that $\ch{j_i} \le i < \ch{j_i} + d_{j_i}$. Next, let $H = \varphi_1(T_1) \cup \ldots \cup \varphi_{s-1}(T_{s-1})$, let $B = N_H(u) \cup N_H(w)$, and note that $|B| \le 2\Delta(H)$. Without loss of generality, we may assume that (i) $u, w \not\in B$, as otherwise $uw \in H$ and consequently $\Pr(E_{i,u,w}^s \mid \FF_s) = 0$, and that (ii) $\Delta(H) \le D = 2np$ holds always, as otherwise the left-hand side of~\eqref{eq:uniform-dist} is zero.

  Let $A_{-1}$ denote the event that $\varphi_s(v_0) \not\in \{u,w\}$ and for every $j \in J \setminus \{j_i\}$, let $A_j$ denote the event that $u,w \not\in \{\varphi_s(v_{\ch{j}}), \ldots, \varphi_s(v_{\ch{j}+d_j-1}) \}$, that is, $u$ and $w$ are not among the images of the $d_j$ children of $v_j$ in $T_s$. Finally, denote by $A_{j_i}$ the event that $\varphi(v_i) = u$ and $w \not\in \{\varphi_s(v_{\ch{j_i}}), \ldots, \varphi_s(v_{\ch{j_i}+d_{j_i}-1}) \}$. Observe that the event $E_{i,u,w}^s$ can be expressed as an intersection of a sequence of events $A_j$, namely
  \begin{equation}
    \label{eq:Eiuw-as-cap-Aj}
    E_{i,u,w}^s = A_{-1} \cap \bigcap_{j \in J, j < i} A_j.
  \end{equation}
  With foresight, for every $j \in J$, define
  \begin{equation}
    \label{eq:Pj-def}
    P_j =
    \begin{cases}
      \left(1 - \frac{d_j}{n-\ch{j}}\right)^2, & \text{if $j < j_i$}, \\
      \frac{e^{2D/(\alpha n)}}{n-\ch{j}} \cdot \left(1 - \frac{d_j}{n-\ch{j}}\right), & \text{if $j = j_i$}, \\
      1 - \frac{d_j}{n-\ch{j}}, & \text{if $j > j_i$}.
    \end{cases}
  \end{equation}
  A good way to digest~\eqref{eq:Pj-def} is to observe the following. If $\varphi_s(v_0), \ldots, \varphi_s(v_{m-1})$ formed a uniform random $m$-element ordered subset of $V$, then $P_k$ would be (approximately) equal to the conditional probability of the event $A_k$ occurring, conditioned on $\bigcap_{j < k} A_j$ occurring. In particular, we have the following identity:
  \begin{equation}
    \label{eq:prod-Pj}
    \prod_{j \in J, j < i} P_j = e^{2D/(\alpha n)} \cdot \frac{n-\ch{i}}{(n-1)^2}.
  \end{equation}
  To see~\eqref{eq:prod-Pj}, note first that $\ch{j} + d_j = \ch{\suc{j}}$ for every $j \in J$ (recall that $\suc{j}$ is the successor of $j$ in $J$) and hence for every $k \in J$,
  \[
  \prod_{j \in J, j < k} \left(1 - \frac{d_j}{n-\ch{j}}\right) = \prod_{j \in J, j < k} \frac{n-\ch{\suc{j}}}{n-\ch{j}} = \frac{n-\ch{k}}{n-\ch{0}} = \frac{n-\ch{k}}{n-1}.
  \]
  Denoting the left-hand side of~\eqref{eq:prod-Pj} by $P$, we now see that
  \[
  P = \frac{e^{2D/(\alpha n)}}{n-\ch{j_i}} \cdot \prod_{j \in J, j < j_i} \left(1 - \frac{d_j}{n-\ch{j}}\right) \cdot \prod_{j \in J, j < i} \left(1 - \frac{d_j}{n-\ch{j}}\right) = e^{2D/(\alpha n)} \cdot \frac{n-\ch{i}}{(n-1)^2}.
  \]
  Therefore, in order to bound $\Pr(E_{i,u,w}^s \mid \FF_s)$, it will be enough to bound the ``conditional probability $\Pr(A_k \mid \bigcap_{j < k} A_j)$'' from above by $P_k$ (times a small error term) for each $k \le i$ and then use the chain rule for conditional probabilities.

  We now formalise the above discussion. If $j \in J \setminus \{j_i\}$, then by Lemma~\ref{lemma:ex-clock-running-time},
  \begin{equation}
    \label{eq:Pr-Aj-FFsj}
    \begin{split}
      \Pr(A_j \mid \FF_{s,j}) & = \left(1 - \frac{d_j}{|N_j^s(1)|}\right)^{\indicator[u \in N_j^s(1)]} \left(1 - \frac{d_j}{|N_j^s(1) \setminus \{u\}|}\right)^{\indicator[w \in N_j^s(1)]} \\
      & \le \left(1 - \frac{d_j}{|U_j^s|}\right)^{\indicator[u \in N_j^s(1)] + \indicator[w \in N_j^s(1)]}.
    \end{split}
  \end{equation}
  where the inequality holds as $N_j^s(1) \subseteq U_j^s$. Recall (e.g., from the proof of Claim~\ref{claim:Nisone-size}) that an $x \in V$ belongs to $N_j^s(1)$ if and only if $x \in U_j^s$ and $\varphi_s(v_j) \not\in N_H(x)$ and hence,
  \begin{equation}
    \label{eq:Pr-Aj-FFsj-indicators}
    \indicator[u \in N_j^s(1)] +   \indicator[w \in N_j^s(1)] \ge \indicator[u \in U_j^s] + \indicator[w \in U_j^s] - 2 \cdot \indicator[\varphi_s(v_j) \in B].
  \end{equation}
  Putting \eqref{eq:Pr-Aj-FFsj} and \eqref{eq:Pr-Aj-FFsj-indicators} together yields, recalling that $|U_j^s| = n - \ch{j}$,
  \begin{equation}
    \label{eq:Pr-Aj-FFsj-continued}
    \Pr(A_j \mid \FF_{s,j}) \le \left(1 - \frac{d_j}{n-\ch{j}}\right)^{\indicator[u \in U_j^s] + \indicator[w \in U_j^s] -2 \cdot \indicator[\varphi_s(v_j) \in B]}.
  \end{equation}
  As $d_j \le \Delta \le \alpha n / 1600$ and $n - \ch{j} \ge n - m \ge \alpha n$, we can estimate
  \begin{equation}
    \label{eq:Pr-Aj-FFsj-error-term}
    \left(1 - \frac{d_j}{n-\ch{j}}\right)^{-1} \le \exp\left(\frac{3d_j}{2(n - \ch{j})} \right).
  \end{equation}
  Substituting \eqref{eq:Pr-Aj-FFsj-error-term} into \eqref{eq:Pr-Aj-FFsj-continued}, we obtain
  \begin{equation}
    \label{eq:Pr-Aj-FFsj-bound}
    \Pr(A_j \mid \FF_{s,j}) \le \left(1 - \frac{d_j}{n-\ch{j}}\right)^{\indicator[u \in U_j^s] + \indicator[w \in U_j^s]} \cdot \exp\left( 3 \cdot \indicator[\varphi_s(v_j) \in B] \cdot \frac{d_j}{n-\ch{j}} \right).
  \end{equation}

  In the remaining case $j = j_i$, Lemma~\ref{lemma:ex-clock-running-time} implies that
  \begin{equation}
    \label{eq:Pr-Aji-FFsji}
    \Pr(A_{j_i} \mid \FF_{s,j_i}) = \left(1 - \frac{d_{j_i}}{|N_{j_i}^s(1)|}\right)^{\indicator[w \in N_{j_i}^s(1)]} \cdot \frac{\indicator[u \in N_{j_i}^s(1)]}{|N_{j_i}^s(1) \setminus \{w\}|}.
  \end{equation}
  Similarly as above, the first term in the right-hand side of~\eqref{eq:Pr-Aji-FFsji} may be estimated as follows:
  \begin{equation}
    \label{eq:Pr-Aji-FFsji-first-term}
    \begin{split}
      \left(1 - \frac{d_{j_i}}{|N_{j_i}^s(1)|}\right)^{\indicator[w \in N_{j_i}^s(1)]} & \le \left(1 - \frac{d_{j_i}}{n - \ch{j_i}}\right)^{\indicator[w \in U_{j_i}^s] - \indicator[\varphi_s(v_{j_i}) \in B]} \\
      & \le \left(1 - \frac{d_{j_i}}{n-\ch{j_i}}\right)^{\indicator[w \in U_{j_i}^s]} \cdot \exp\left( 3 \cdot \indicator[\varphi_s(v_{j_i}) \in B] \cdot \frac{d_{j_i}}{n-\ch{j_i}} \right).
    \end{split}
  \end{equation}
  To estimate the second term, we may use Claim~\ref{claim:Nisone-size} and the inequality $n - \ch{j_i} \ge n - m +1 \ge \alpha n + 1$:
  \begin{equation}
    \label{eq:Pr-Aji-FFsji-second-term}
    \frac{\indicator[u \in N_{j_i}^s(1)]}{|N_{j_i}^s(1) \setminus \{w\}|} \le \frac{1}{n - \ch{j_i} - D - 1} \le \frac{\alpha n + 1}{\alpha n - D} \cdot \frac{1}{n-\ch{j_i}} \le \frac{e^{2D/(\alpha n)}}{n-\ch{j_i}},
  \end{equation}
  where the last inequality holds as $D = 2np \le \alpha n / 63$.
  Putting \eqref{eq:Pr-Aji-FFsji}, \eqref{eq:Pr-Aji-FFsji-first-term}, and \eqref{eq:Pr-Aji-FFsji-second-term} together yields
  \begin{equation}
    \label{eq:Pr-Aji-FFsji-bound}
    \Pr(A_{j_i} \mid \FF_{s,j_i}) \le \frac{e^{2D/(\alpha n)}}{n-\ch{j_i}} \cdot \left(1 - \frac{d_{j_i}}{n-\ch{j_i}}\right)^{\indicator[w \in U_{j_i}^s]} \cdot \exp\left( 3 \cdot \indicator[\varphi_s(v_{j_i}) \in B] \cdot \frac{d_{j_i}}{n-\ch{j_i}} \right).
  \end{equation}

  If the set $B$ was empty, the somewhat annoying exponential error terms involving $\indicator[\varphi_s(v_j) \in B]$ would disappear from both \eqref{eq:Pr-Aj-FFsj-bound} and~\eqref{eq:Pr-Aji-FFsji-bound} and one could easily derive the claimed upper bound on the probability of $E_{i,u,w}^s$ arguing similarly as in the proof of~\eqref{eq:prod-Pj}. Unfortunately this in true only if $s=1$ and the treatment of the general case ($B \neq \emptyset$), which is the main business of this lemma, requires considerable effort.

  First, let us define, for every $I \subseteq J$,
  \[
  X_I = \exp\left( 3 \sum_{k \in I} \indicator[\varphi_s(v_k) \in B] \cdot \frac{d_k}{n-\ch{k}} \right),
  \]
  so that the exponential terms in the right-hand sides of~\eqref{eq:Pr-Aj-FFsj-bound} and~\eqref{eq:Pr-Aji-FFsji-bound} are simply $X_{\{j\}}$ and $X_{\{j_i\}}$, respectively. The following estimate is key.

  \begin{claim}
    \label{claim:Ex-Aj-XI}
    For every $j \in J$ and $I \subseteq \{\ch{j}, \dotsc, \ch{j} + d_j - 1\}$, the followings holds.
    \begin{enumerate}[label={\rm(\textit{\roman*})}]
    \item
      \label{item:Ex-Aj-XI}
      If $j \neq j_i$, then
      \[
      \Ex\big[\indicator[A_j] \cdot X_I \mid \FF_{s,j} \big] \le \left(1 - \frac{d_j}{n-\ch{j}}\right)^{\indicator[u \in U_j^s] + \indicator[w \in U_j^s]} \cdot X_{\{j\}} \cdot \exp\left(\frac{8D}{\alpha^3} \cdot \sum_{k \in I} \frac{d_k}{(n-\ch{k})^2} \right).
      \]
    \item
      \label{item:Ex-AjiXI}
      If $j = j_i$, then
      \[
      \Ex\big[\indicator[A_{j_i}] \cdot X_I \mid \FF_{s,j} \big] \le \frac{e^{2D/(\alpha n)}}{n-\ch{j_i}} \cdot \left(1 - \frac{d_{j_i}}{n-\ch{j_i}}\right)^{\indicator[w \in U_{j_i}^s]} \cdot X_{\{j_i\}} \cdot \exp\left(\frac{8D}{\alpha^3} \cdot \sum_{k \in I} \frac{d_k}{(n-\ch{k})^2} \right).
      \]
    \end{enumerate}
  \end{claim}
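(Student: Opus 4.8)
The plan is to prove both parts of the claim by the same reduction. Fix $j \in J$ and $I \subseteq \{\ch{j}, \dots, \ch{j}+d_j-1\}$ and write
\[
\Ex\big[\indicator[A_j]\cdot X_I \mid \FF_{s,j}\big] = \Pr(A_j \mid \FF_{s,j})\cdot\Ex\big[X_I \mid A_j,\FF_{s,j}\big],
\]
with the convention that the right-hand side is $0$ when $\Pr(A_j \mid \FF_{s,j}) = 0$, in which case the left-hand side vanishes too. The first factor is already controlled: by~\eqref{eq:Pr-Aj-FFsj-bound} when $j \ne j_i$, and by~\eqref{eq:Pr-Aji-FFsji-bound} when $j = j_i$, it is at most the product of the prefactor $\big(1-\tfrac{d_j}{n-\ch{j}}\big)^{\indicator[u\in U_j^s]+\indicator[w\in U_j^s]}$ (respectively $\tfrac{e^{2D/(\alpha n)}}{n-\ch{j_i}}\big(1-\tfrac{d_{j_i}}{n-\ch{j_i}}\big)^{\indicator[w\in U_{j_i}^s]}$) and $X_{\{j\}}$, all of which are $\FF_{s,j}$-measurable. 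So the claim reduces to showing
\[
\Ex\big[X_I \mid A_j,\FF_{s,j}\big] \le \exp\!\Big(\tfrac{8D}{\alpha^3}\sum_{k\in I}\tfrac{d_k}{(n-\ch{k})^2}\Big).
\]

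The key step is to pin down the conditional law of the images of the children of $v_j$. By Lemma~\ref{lemma:ex-clock-running-time}, conditioned on $\FF_{s,j}$ these images form a uniform random ordered tuple of distinct elements of $N_j^s(1)$; conditioning additionally on $A_j$ just restricts that tuple to avoid $\{u,w\}$ and, when $j=j_i$, pins the coordinate for $v_i$ to $u$. Hence, given $A_j$ and $\FF_{s,j}$, the remaining child-images form a uniform random ordered tuple of distinct elements of $S := N_j^s(1)\setminus\{u,w\}$, whose size $r':=|S|$ satisfies, by Claim~\ref{claim:Nisone-size} and the standing assumption $\Delta(H)\le D$ from the proof of Lemma~\ref{lemma:uniform-dist}, $r' \ge n-\ch{j}-D-2 \ge \alpha n/2$ (using $n-\ch{j}\ge\alpha n$, $D\le\alpha n/64$ by~\eqref{eq:main-assumptions}, and $n$ large). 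When $j=j_i$ we may moreover assume $i\notin I$, since on $A_{j_i}$ the pinned image equals $u\notin B$, so the factor $e^{3\indicator[\varphi_s(v_i)\in B]d_i/(n-\ch{i})}$ in $X_I$ equals $1$. Writing $a_k := 3d_k/(n-\ch{k})$, so that $X_I = \prod_{k\in I}\big(1+(e^{a_k}-1)\indicator[\varphi_s(v_k)\in B]\big)$, I would expand the product and bound, for each $I'\subseteq I$, the probability that $\varphi_s(v_k)\in B$ for all $k\in I'$ by $\prod_{\ell=0}^{|I'|-1}\frac{|B\cap S|-\ell}{r'-\ell}\le(|B\cap S|/r')^{|I'|}\le(2D/r')^{|I'|}$, using $|B|\le 2\Delta(H)\le 2D$. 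Summing over $I'$ then gives $\Ex[X_I\mid A_j,\FF_{s,j}] \le \prod_{k\in I}\big(1+(e^{a_k}-1)\cdot 2D/r'\big)$.

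It remains to estimate each factor. Since $a_k \le 3\Delta/(\alpha n) < 1$ by~\eqref{eq:main-assumptions}, one has $e^{a_k}-1\le 2a_k$, so $1+(e^{a_k}-1)\cdot 2D/r' \le \exp\!\big(12Dd_k/((n-\ch{k})r')\big)$; and since $\alpha<1/2$ gives $r'\ge\alpha n/2\ge\tfrac32\alpha^3(n-\ch{k})$, this is at most $\exp\!\big(\tfrac{8D}{\alpha^3}\cdot d_k/(n-\ch{k})^2\big)$. Multiplying over $k\in I$ yields the displayed bound on $\Ex[X_I\mid A_j,\FF_{s,j}]$, and multiplying by the bound on $\Pr(A_j\mid\FF_{s,j})$ proves both~\ref{item:Ex-Aj-XI} and~\ref{item:Ex-AjiXI}.

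The step I expect to require the most care is the conditional law: conditioning on $A_j$ shrinks the sampling pool from $N_j^s(1)$ to $S=N_j^s(1)\setminus\{u,w\}$ and, when $j=j_i$, also pins one coordinate, so one must check that the remaining coordinates still form a uniform sample without replacement from $S$ (so their marginals are uniform on $S$ and the joint obeys the negative-correlation estimate above) and that the pinned coordinate is harmless — which is precisely where the reductions $u\ne w$ and $u,w\notin B$ made at the start of the proof of Lemma~\ref{lemma:uniform-dist} enter. The rest is routine bookkeeping: tracking $\FF_{s,j}$-measurability and checking the elementary inequalities ($e^{a_k}-1\le 2a_k$, $|B|\le 2D$, $r'\ge\tfrac32\alpha^3(n-\ch{k})$) against the regime~\eqref{eq:main-assumptions}.
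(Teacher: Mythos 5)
Your proof is correct, but it takes a genuinely different route from the paper's. The paper never conditions on $A_j$; instead it linearises the exponential via \eqref{eq:XI-linear-bound} (which is where the global estimate \eqref{eq:sum-degvk-nkk} and the constant $\alpha^{-3}$ enter), bounds $\indicator[A_j]\cdot X_I \le \indicator[A_j] + (X_I-1)$ additively, and then converts the additive error $\Ex[X_I-1\mid\FF_{s,j}]$ from \eqref{eq:XI-minus-one-bound} into the multiplicative factor $\exp(8D\alpha^{-3}\sum_k d_k/(n-\ch{k})^2)$ using the lower bound \eqref{eq:Aj-lower-bound} on the main term. Since that lower bound fails for $j=j_i$ (the main term is of order $1/n$ there), the paper is forced into a separate, more delicate argument for that case, bounding the joint probability $\Pr(\varphi_s(v_i)=u \wedge \varphi_s(v_k)\in B)$ as in \eqref{eq:v-ell-v-i} so that the error term inherits the factor $1/(n-\ch{j_i})$. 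Your approach — factor out $\Pr(A_j\mid\FF_{s,j})$ (controlled by \eqref{eq:Pr-Aj-FFsj-bound} and \eqref{eq:Pr-Aji-FFsji-bound}), identify the conditional law of the unpinned child-images given $A_j$ as a uniform ordered sample without replacement from $N_j^s(1)\setminus\{u,w\}$ (exact, by the exchangeability in Lemma~\ref{lemma:ex-clock-running-time}), and then bound $\Ex[X_I\mid A_j,\FF_{s,j}]$ via the product expansion and the negative-correlation inequality $\prod_{\ell}\frac{|B\cap S|-\ell}{r'-\ell}\le(|B\cap S|/r')^{|I'|}$ — treats both cases uniformly, dispenses with the additive-to-multiplicative conversion, and does not need \eqref{eq:sum-degvk-nkk} at this stage (only the per-factor bound $a_k<1$). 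The one step that genuinely needed care, and which you handle correctly, is the identification of the conditional law and the observation that the pinned coordinate $\varphi_s(v_i)=u\notin B$ contributes a trivial factor, so $i$ may be dropped from $I$; your constant-chasing ($e^{a_k}-1\le 2a_k$, $r'\ge\tfrac32\alpha^3(n-\ch{k})$ using $\alpha<1/2$) also checks out against \eqref{eq:main-assumptions} and lands on the same exponent $8D/\alpha^3$.
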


  In order to prove Claim~\ref{claim:Ex-Aj-XI}, we first argue that for all $I \subseteq J$,
  \begin{equation}
    \label{eq:XI-linear-bound}
    X_I \le 1 + \alpha^{-3} \cdot \sum_{k \in I} \indicator[\varphi_s(v_k) \in B] \cdot \frac{d_k}{n-\ch{k}}.
  \end{equation}
  Indeed, \eqref{eq:XI-linear-bound} follows from the fact that $e^x \le 1+e^a \cdot x$ for all $x \in [0,a]$ and the inequality
  \begin{equation}
    \label{eq:sum-degvk-nkk}
    \begin{split}
      \sum_{k \in I} \frac{d_k}{n-\ch{k}} & \le \sum_{k \in J} \frac{d_k}{n-\ch{k}} \le \sum_{k \in J} \sum_{d = 0}^{d_k-1} \frac{1}{n-\ch{k}-d} = \sum_{d = 1}^{m-1} \frac{1}{n-d} \\
      & = H_{n-1} - H_{n-m} \le \log \frac{n-1}{n-m} \le \log \frac{1}{\alpha},
    \end{split}
  \end{equation}
  where $H_d = \sum_{i=1}^d \frac{1}{d}$ is the $d$th harmonic number and we used the well-known fact that $d \mapsto H_d - \log d$ is monotonically decreasing.

  Fix a $j \in J$ and assume that $I \subseteq \{\ch{j}, \ldots, \ch{j} + d_j-1\}$. By Lemma~\ref{lemma:ex-clock-running-time}, conditioned on $\FF_{s,j}$, each $\varphi_s(v_k)$ with $k \in I$ is a uniformly chosen random element of the set $N_j^s(1)$ and hence~\eqref{eq:XI-linear-bound} yields
  \begin{equation}
    \label{eq:XI-minus-one-bound}
    \begin{split}
      \Ex[X_I - 1 \mid \FF_{s,j}] & \le \alpha^{-3} \cdot \sum_{k \in I} \frac{|B| }{|N_j^s(1)|} \cdot \frac{d_k}{n-\ch{k}} \\
      & \le \alpha^{-3} \cdot \sum_{k \in I} \frac{2D}{n-\ch{j}-D} \cdot \frac{d_k}{n-\ch{k}} \le \frac{4D}{\alpha^3} \cdot \sum_{k \in I} \frac{d_k}{(n-\ch{k})^2},
    \end{split}
  \end{equation}
  where in the second and the third inequalities we used Claim~\ref{claim:Nisone-size} and the inequalities $\ch{k} > \ch{j}$ and $n - \ch{j} - D \ge  (n-\ch{j})/2$ (which follows as $2D \le \alpha n \le n - \ch{j}$), respectively.

  Now, given a $j \in J \setminus \{j_i\}$ and an $I$ as above, we estimate the conditional expectation of $\indicator[A_j] \cdot X_I$, conditioned on $\FF_{s,j}$. To this end, note first that $X_I \ge 1$ and hence $\indicator[A_j] \cdot X_I \le \indicator[A_j] + X_I - 1$. In particular, we may invoke~\eqref{eq:Pr-Aj-FFsj-bound} and~\eqref{eq:XI-minus-one-bound} directly to obtain
  \begin{equation}
    \label{eq:Ex-Aj-XI}
    \Ex\big[\indicator[A_j] \cdot X_I \mid \FF_{s,j} \big] \le \left(1 - \frac{d_j}{n-\ch{j}}\right)^{\indicator[u \in U_j^s] + \indicator[w \in U_j^s]} \cdot X_{\{j\}} + \frac{4D}{\alpha^3} \cdot \sum_{k \in I} \frac{d_k}{(n-\ch{k})^2}.
  \end{equation}
  Now, item~\ref{item:Ex-Aj-XI} of Claim~\ref{claim:Ex-Aj-XI} is a straightforward consequence of~\eqref{eq:Ex-Aj-XI}, the simple estimate
  \begin{equation}
    \label{eq:Aj-lower-bound}
    \left(1 - \frac{d_j}{n-\ch{j}}\right)^{\indicator[u \in U_j^s] + \indicator[w \in U_j^s]} \cdot X_{\{j\}} \ge \left(1 - \frac{d_j}{n-\ch{j}}\right)^2 \ge \left(1 - \frac{\Delta}{\alpha n}\right)^2 \ge \frac{1}{2},
  \end{equation}
  and the inequality $1+x \le e^x$.

  We now estimate the conditional expectation of $\indicator[A_{j_i}] \cdot X_I$. Unfortunately, a bound akin to~\eqref{eq:Aj-lower-bound} does not hold for $\Pr(A_{j_i} \mid \FF_{s,j})$ and hence in order to obtain a suitable upper bound for $\Ex\big[\indicator[A_{j_i}] \cdot X_I\big]$, we need to argue somewhat differently, reiterating some of the above computations. As $A_{j_i}$ implies that $\varphi_s(v_i) = u$, it follows from~\eqref{eq:XI-linear-bound} that
  \begin{equation}
    \label{eq:Aji-XI-linear-bound}
    \indicator[A_{j_i}] \cdot (X_I - 1) \le \alpha^{-3} \cdot \sum_{k \in I} \indicator[\varphi_s(v_i) = u \wedge \varphi_s(v_k) \in B] \cdot \frac{d_k}{n-\ch{k}}.
  \end{equation}
  By Lemma~\ref{lemma:ex-clock-running-time}, conditioned on $\FF_{s,j_i}$, for each $k \in I \setminus \{i\}$, the pair $\big(\varphi_s(v_i), \varphi_s(v_k)\big)$ is a uniformly chosen random $2$-element ordered subset of $N_{j_i}^s(1)$. In particular, by~\eqref{eq:Pr-Aji-FFsji-second-term},
  \begin{equation}
    \label{eq:v-ell-v-i}
    \Pr\big(\varphi_s(v_i) = u \wedge \varphi_s(v_k) \in B \mid \FF_{s,j_i} \big) \le \frac{1}{|N_{j_i}^s(1)|} \cdot \frac{|B|}{|N_{j_i}^s(1)|-1} \le \frac{e^{2D/(\alpha n)}}{n-\ch{j_i}} \cdot \frac{|B|}{|N_{j_i}^s(1)|},
  \end{equation}
  holds for each $k$. Indeed, when $k = i$, then~\eqref{eq:v-ell-v-i} holds trivially, as we assumed that $u \not\in B$ and hence the left-hand side of~\eqref{eq:v-ell-v-i} is zero. Substituting~\eqref{eq:v-ell-v-i} into~\eqref{eq:Aji-XI-linear-bound} and using~\eqref{eq:XI-minus-one-bound}, we obtain
  \begin{equation}
    \label{eq:Aji-XI-minus-one-bound}
    \Ex\big[ \indicator[A_{j_i}] \cdot (X_I - 1) \mid \FF_{s,j} \big] \le \frac{e^{2D/(\alpha n)}}{n-\ch{j_i}} \cdot \frac{4D}{\alpha^3} \cdot \sum_{k \in I} \frac{d_k}{(n-\ch{k})^2}.
  \end{equation}
  Finally, as $\indicator[A_{j_i}] \cdot X_I  = \indicator[A_{j_i}] \cdot (X_I - 1) + \indicator[A_{j_i}]$, combining~\eqref{eq:Pr-Aji-FFsji-bound} with~\eqref{eq:Aji-XI-minus-one-bound} with yields
  \begin{equation}
    \label{eq:Ex-Aji-XI}
    \Ex\big[\indicator[A_{j_i}] \cdot X_I \mid \FF_{s,j} \big] \le \frac{e^{2D/(\alpha n)}}{n-\ch{j_i}} \cdot \left[\left(1 - \frac{d_{j_i}}{n-\ch{j_i}}\right)^{\indicator[w \in U_{j_i}^s]} \cdot X_{\{j_i\}} + \frac{4D}{\alpha^3} \cdot \sum_{k \in I} \frac{d_k}{(n-\ch{k})^2}\right].
  \end{equation}
  Now, item~\ref{item:Ex-AjiXI} of Claim~\ref{claim:Ex-Aj-XI} is a simple consequence of~\eqref{eq:Ex-Aji-XI}, the simple estimate
  \[
  \left(1-\frac{d_{j_i}}{n-\ch{j_i}}\right)^{\indicator[w \in U_{j_i}^s]} \cdot X_{\{j_i\}} \ge \frac{1}{2},
  \]
  cf.~\eqref{eq:Aj-lower-bound}, and the inequality $1+x \le e^x$. This completes the proof of Claim~\ref{claim:Ex-Aj-XI}.

  \medskip

  With Claim~\ref{claim:Ex-Aj-XI} now in place, define for every $k \in J$ and every $I \subseteq J$,
  \[
  Z_{k,I} = \indicator[A_{-1}] \cdot \prod_{j \in J, j < k} \indicator[A_j] \cdot X_I
  \]
  and note that if $I \subseteq \{0, \ldots, \ch{k}-1\}$, then $Z_{k,I}$ is $\FF_{s,k}$-measurable. We shall prove the following estimate using induction on $k$.
  \begin{claim}
    \label{claim:Ex-ZkI-bound}
    For every $k \in J$ and every $I \subseteq J \cap \{k, \ldots, \ch{k}-1\}$,
    \begin{equation}
      \label{eq:Ex-ZkI-bound}
      \Ex[ Z_{k,I} \mid \FF_s ] \le \prod_{j \in J, j < k} P_j \cdot \exp\left(\frac{8D}{\alpha^3} \cdot \sum_{j \in (J \cap \{0, \ldots, k-1\}) \cup I} \frac{d_j}{(n-\ch{j})^2}\right).
    \end{equation}
  \end{claim}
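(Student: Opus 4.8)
The plan is to prove Claim~\ref{claim:Ex-ZkI-bound} by induction on the position of $k$ in $J$, peeling off one step of the embedding algorithm at a time and using Claim~\ref{claim:Ex-Aj-XI} as the workhorse. For the base case $k=0$ (the minimum of $J$) one has $Z_{0,I}=\indicator[A_{-1}]\cdot X_I$ with $I\subseteq J\cap\{0,\dots,\ch{0}-1\}=\{0\}$; the case $I=\emptyset$ is trivial, and for $I=\{0\}$ I would use that $\varphi_s(v_0)$ is uniform on $V$ conditionally on $\FF_s$ together with $|B|\le2\Delta(H)\le2D$ and~\eqref{eq:XI-linear-bound} to get
\[
\Ex\big[Z_{0,\{0\}}\mid\FF_s\big]\le\Ex\big[X_{\{0\}}\mid\FF_s\big]\le1+\frac{2D}{\alpha^{3}}\cdot\frac{d_0}{(n-\ch{0})^{2}}\le\exp\Big(\frac{8D}{\alpha^{3}}\cdot\frac{d_0}{(n-\ch{0})^{2}}\Big),
\]
which is~\eqref{eq:Ex-ZkI-bound} for $k=0$, the product over $j\in J$ with $j<0$ being empty.

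For the inductive step I would assume~\eqref{eq:Ex-ZkI-bound} for some $k\in J$ and prove it for $\suc{k}$. Fix $I'\subseteq J\cap\{\suc{k},\dots,\ch{\suc{k}}-1\}$, let $C=\{\ch{k},\dots,\ch{k}+d_k-1\}$ be the set of labels of the children of $v_k$ (so $\ch{k}+d_k=\ch{\suc{k}}$), and split $I'=I_1\sqcup I_2$ with $I_1=I'\cap C$ and $I_2=I'\setminus C\subseteq J\cap\{\suc{k},\dots,\ch{k}-1\}$. Since $X$ is multiplicative in its index set and $\{j\in J:j<\suc{k}\}=\{j\in J:j<k\}\cup\{k\}$, one has the identity $Z_{\suc{k},I'}=\indicator[A_k]\cdot X_{I_1}\cdot Z_{k,I_2}$. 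As $I_2\subseteq\{0,\dots,\ch{k}-1\}$, the factor $Z_{k,I_2}$ is $\FF_{s,k}$-measurable, so that $\Ex[Z_{\suc{k},I'}\mid\FF_{s,k}]=Z_{k,I_2}\cdot\Ex[\indicator[A_k]X_{I_1}\mid\FF_{s,k}]$, and the last conditional expectation is bounded by Claim~\ref{claim:Ex-Aj-XI} applied with $j=k$ and $I=I_1$ (part~\ref{item:Ex-Aj-XI} if $k\ne j_i$, part~\ref{item:Ex-AjiXI} if $k=j_i$).

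The crucial step is matching the probability-like factor of that bound with $P_k$. On the event $\{Z_{k,I_2}\ne0\}$ the events $A_{-1}$ and $A_j$ for all $j\in J$, $j<k$, hold, and since $\{v_0,\dots,v_{\ch{k}-1}\}=\bigcup_{j\in J,\,j<k}\big(\{v_j\}\cup N_T(v_j)\big)$ by the observation in Section~\ref{sec:BFS}, every $v_\ell$ with $\ell<\ch{k}$ is a child of some $v_j$ with $j\in J$, $j<k$. Hence, on $\{Z_{k,I_2}\ne0\}$: if $k\le j_i$ then no $u$-fixing event has yet been imposed, so $u,w\in U_k^s$, which makes the exponent $\indicator[u\in U_k^s]+\indicator[w\in U_k^s]$ equal $2$ (or, for $k=j_i$, makes $\indicator[w\in U_{j_i}^s]=1$); if $k>j_i$ then $A_{j_i}$ has fixed $\varphi_s(v_i)=u$ with $i<\ch{k}$, so $u\notin U_k^s$, but still $w\in U_k^s$ and the exponent is at least $1$. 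In all cases the probability-like factor, restricted to $\{Z_{k,I_2}\ne0\}$, is at most $P_k$ from~\eqref{eq:Pj-def}. Folding the leftover factor $X_{\{k\}}$ into $Z_{k,I_2}$ — which is legitimate since $k\notin I_2$ and $(k,I_2\cup\{k\})$ is a valid instance of the claim — yields
\[
\Ex\big[Z_{\suc{k},I'}\mid\FF_{s,k}\big]\le P_k\cdot Z_{k,I_2\cup\{k\}}\cdot\exp\Big(\frac{8D}{\alpha^{3}}\sum_{j\in I_1}\frac{d_j}{(n-\ch{j})^{2}}\Big).
\]
Taking $\Ex[\,\cdot\mid\FF_s]$, pulling out the deterministic factors, and applying the induction hypothesis to $Z_{k,I_2\cup\{k\}}$ then gives~\eqref{eq:Ex-ZkI-bound} for $\suc{k}$, after checking the two bookkeeping identities $P_k\prod_{j\in J,\,j<k}P_j=\prod_{j\in J,\,j<\suc{k}}P_j$ and that $I_1$ is disjoint from $(J\cap\{0,\dots,k-1\})\cup I_2\cup\{k\}$ with union $(J\cap\{0,\dots,\suc{k}-1\})\cup I'$ (using $I_1\subseteq\{\ch{k},\dots\}$, $I_1\cup I_2=I'$, and that no element of $J$ lies strictly between $k$ and $\suc{k}$).

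I expect the main obstacle to be the case analysis just described: pinning down exactly which of $u,w$ lies in $U_k^s$ on $\{Z_{k,I_2}\ne0\}$, so that the exponent of $1-d_k/(n-\ch{k})$ coming from Claim~\ref{claim:Ex-Aj-XI} matches the three-case definition of $P_k$ in~\eqref{eq:Pj-def}; this is where the BFS structure of $T_s$ and the distinguished role of $j_i$ genuinely enter. A secondary, purely bookkeeping difficulty is keeping track of the stray factor $X_{\{k\}}$ that appears at each step — it must be merged into $Z_{k,I_2}$ rather than discarded — and checking that the index sets in the exponents remain pairwise disjoint so the sums telescope into the ranges claimed.
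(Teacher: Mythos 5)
Your proof is correct and follows essentially the same route as the paper's: the paper phrases the induction as reducing $Z_{k,I}$ to $Z_{\pre{k},\,(I\setminus I')\cup\{\pre{k}\}}$ by conditioning on $\FF_{s,\pre{k}}$ and invoking Claim~\ref{claim:Ex-Aj-XI}, which is exactly your step from $k$ to $\suc{k}$ with the roles of the indices relabelled, including the same case analysis on $A_{-1}\cap\bigcap_{j<k}A_j$ to match the exponents with~\eqref{eq:Pj-def} and the same device of absorbing $X_{\{k\}}$ into the recursive term. The only (harmless) cosmetic difference is in the base case, where the paper cites~\eqref{eq:XI-minus-one-bound} while you redo the one-line computation for the root directly.
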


  As the statement of Claim~\ref{claim:Ex-ZkI-bound} might look somewhat mysterious at first sight, let us now show how it implies the bound on $\Pr(E_{i,u,w,}^s \mid \FF_s)$ claimed in the statement of Lemma~\ref{lemma:uniform-dist}. To this end, observe that
  \begin{equation}
    \label{eq:sum-j-deg-n-kj-squared}
    \begin{split}
      \sum_{j \in J, j < i} \frac{d_j}{(n-\ch{j})^2} & \le \sum_{j \in J} \frac{d_j}{(n-\ch{j})^2} \le \sum_{j \in J} \sum_{d=0}^{d_j-1} \frac{1}{(n-\ch{j}-d)^2} = \sum_{d=1}^{m-1} \frac{1}{(n-d)^2} \\
      & \le \sum_{d=1}^{m-1} \frac{1}{(n-d)(n-d-1)} = \frac{1}{n-m} - \frac{1}{n-1} \le \frac{1}{\alpha n}.
    \end{split}
  \end{equation}
  Now, as $Z_{i,\emptyset} = \indicator[E_{i,u,w}^s]$, then Claim~\ref{claim:Ex-ZkI-bound}, \eqref{eq:prod-Pj}, and~\eqref{eq:sum-j-deg-n-kj-squared} yield (recalling that $D = 2np$),
  \[
  \begin{split}
    \Pr(E_{i,u,w}^s \mid \FF_s) & \le \prod_{j \in J, j < i} P_j \cdot \exp\left(\frac{8D}{\alpha^3} \cdot \sum_{j \in J, j < i} \frac{d_j}{(n-\ch{j})^2}\right) \le \frac{n-\ch{i}}{(n-1)^2} \cdot \exp\left(\frac{2D}{\alpha n} + \frac{8D}{\alpha^4n} \right) \\
    & \le \frac{n-\ch{i}}{n^2} \cdot \exp\left(\frac{4p}{\alpha} + \frac{16p}{\alpha^4} + \frac{2}{n-1}\right) \le \frac{n-\ch{i}}{n^2} \cdot e^{\delta}.
  \end{split}
  \]
  Therefore, in order to complete the proof, it suffices to prove Claim~\ref{claim:Ex-ZkI-bound}.

  \medskip

  We prove the claim using induction on $k$. For the base case $k=0$, note that $\ch{0} = 1$ and fix some $I \subseteq \{0\}$. Clearly, $Z_{0,I} \le X_I$ and hence~\eqref{eq:Ex-ZkI-bound} follows directly from~\eqref{eq:XI-minus-one-bound} and the inequality $1 + x \le e^x$. Assume now that $k > 0$ and fix an $I$ as above. Let $\ell = \pre{k}$ be the predecessor of $k$ in $J$ in the BFS ordering (so that $\ch{\ell}+d_\ell = \ch{k}$) and let $I' = I \cap \{\ch{\ell}, \ldots, \ch{\ell} + d_\ell-1\}$. Note that
  \[
  Z_{k,I} = Z_{\ell,I \setminus I'} \cdot \indicator[A_\ell] \cdot X_{I'}
  \]
  and that $I \setminus I' \subseteq \{k, \ldots, \ch{\ell}-1\}$. In particular, $Z_{\ell, I \setminus I'}$ is $\FF_{s,\ell}$-measurable and consequently,
  \begin{equation}
    \label{eq:ZkI-induction}
    \Ex[ Z_{k,I} \mid \FF_s ] = \Ex[ \Ex[Z_{k,I} \mid \FF_{s,\ell}] \mid \FF_s] = \Ex[ Z_{\ell,I \setminus I'} \cdot \Ex[ \indicator[A_\ell] \cdot X_{I'} \mid \FF_{s,\ell}] \mid \FF_s ].
  \end{equation}
  Let $A_{<\ell}^* = A_{-1} \cap \bigcap_{j < \ell} A_j$. Observe that if $\ell \le j_i$, then $A_{<\ell}^*$ implies that $u \in U_\ell^s$. Similarly, if $\ell \le i$, then $A_{<\ell}^*$ implies that $w \in U_\ell^s$. In particular if $\ell < i$, then on $A_{<\ell}^*$, we have $\indicator[u \in U_\ell^s] + \indicator[w \in U_\ell^s] = 1 + \indicator[\ell < j_i]$. Recalling~\eqref{eq:Pj-def} and considering separately the three cases: $\ell < j_i$, $\ell = j_i$, and $\ell > j_i$, one can easily see that Claim~\ref{claim:Ex-Aj-XI} implies that
  \[
  \Ex[ \indicator[A_\ell] \cdot X_{I'} \mid \FF_{s,\ell}] \le P_\ell \cdot X_{\{\ell\}} \cdot \exp\left(\frac{8D}{\alpha^3} \cdot \sum_{j \in I'} \frac{d_j}{(n-\ch{j})^2} \right).
  \]
  Substituting the above into~\eqref{eq:ZkI-induction}, we obtain
  \[
  \Ex[ Z_{k,I} \mid \FF_s ] \le P_\ell \cdot \exp\left(\frac{8D}{\alpha^3} \cdot \sum_{j \in I'} \frac{d_j}{(n-\ch{j})^2} \right) \cdot \Ex[Z_{\ell, I \setminus I' \cup \{\ell\}} \mid \FF_s].
  \]
  As $I \setminus I' \cup \{\ell\} \subseteq J \cap \{\ell, \ldots, \ch{\ell}-1\}$, we may use the inductive assumption with $k \leftarrow \ell$ and $I \leftarrow I \setminus I' \cup \{\ell\}$ to obtain
  \[
  \Ex[ Z_{k,I} \mid \FF_s ] \le \prod_{j \in J, j < \ell} P_j \cdot P_\ell \cdot \exp\left(\frac{8D}{\alpha^3} \cdot \sum_{j \in (J \cap \{0, \ldots, \ell-1\}) \cup (I \setminus I') \cup \{\ell\} \cup I'} \frac{d_j}{(n-\ch{j})^2} \right),
  \]
  which is exactly the claimed inequality, as $\ell \in J$ and $\ell = k^-$.
\end{proof}

Finally, we establish the assertion of Remark~\ref{remark:Ui-distribution}. We first argue that a fairly straightforward modification of the proof of Lemma~\ref{lemma:uniform-dist} gives the following estimate.

\begin{lemma}
  \label{lemma:uniform-dist for garbage}
  For every $s\in [N]$ and every pair of distinct $u, w \in V$, the following holds:
  \begin{equation}
    \label{eq:uniform-dist for garbage}
    \Pr\big( u,w \notin \varphi_s(V(T_s)) \wedge \DD_{s-1} \mid \FF_s \big) \le \left(\frac{n-|V(T_s)|}{n-1}\right)^2 \cdot e^\delta.
  \end{equation}
\end{lemma}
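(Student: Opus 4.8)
The plan is to mimic the proof of Lemma~\ref{lemma:uniform-dist} almost verbatim, exploiting the fact that the present statement does not single out a distinguished ``special'' index and is therefore strictly easier. Fix $s \in [N]$ and distinct $u, w \in V$, write $m = |V(T_s)|$, $H = \varphi_1(T_1) \cup \ldots \cup \varphi_{s-1}(T_{s-1})$, and $B = N_H(u) \cup N_H(w)$. Exactly as in Lemma~\ref{lemma:uniform-dist}, on the event $\comp{\DD_{s-1}}$ the left-hand side of~\eqref{eq:uniform-dist for garbage} is zero, so we may work on $\DD_{s-1}$, where $\Delta(H) \le D := 2np$ and hence $|B| \le 2D$. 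Let $A_{-1}$ be the event $\varphi_s(v_0) \notin \{u,w\}$ and, for each $j \in J$, let $A_j$ be the event that $u, w \notin \{\varphi_s(v_{\ch{j}}), \dotsc, \varphi_s(v_{\ch{j}+d_j-1})\}$. Using the identity $\bigcup_{j < \ell,\, j \in J}\big(\{v_j\} \cup N_{T_s}(v_j)\big) = \{v_0, \dotsc, v_{\ch{\ell}-1}\}$ from Section~\ref{sec:BFS}, I would observe that
\[
\big\{u, w \notin \varphi_s(V(T_s))\big\} = A_{-1} \cap \bigcap_{j \in J} A_j,
\]
and, crucially, that $A_{-1} \cap \bigcap_{j \in J,\, j < \ell} A_j$ implies $u, w \in U_\ell^s$ for \emph{every} $\ell \in J$ (not just those below some cut-off index).

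Next I would set $P_j = \big(1 - \tfrac{d_j}{n-\ch{j}}\big)^2$ for every $j \in J$; this is the ``ordinary'' weight from~\eqref{eq:Pj-def}, now used at all indices since no vertex of $T_s$ is required to land on $u$. The telescoping computation from the proof of~\eqref{eq:prod-Pj}, using $\ch{j} + d_j = \ch{\suc{j}}$ together with the fact that $\ch{j} + d_j = m$ for the last $j \in J$, gives
\[
\prod_{j \in J} P_j = \left(\prod_{j \in J} \frac{n - \ch{\suc{j}}}{n-\ch{j}}\right)^2 = \left(\frac{n-m}{n-1}\right)^2,
\]
which is precisely the main term of~\eqref{eq:uniform-dist for garbage}. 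Since on $A_{-1} \cap \bigcap_{j < \ell} A_j$ one has $\indicator[u \in U_\ell^s] + \indicator[w \in U_\ell^s] = 2$, matching the exponent defining $P_\ell$, item~\ref{item:Ex-Aj-XI} of Claim~\ref{claim:Ex-Aj-XI} applies to every $\ell \in J$, and an induction identical to the one proving Claim~\ref{claim:Ex-ZkI-bound} — with the ``special index'' branch of that argument simply never occurring — yields
\[
\Ex\left[\indicator[A_{-1}] \prod_{j \in J} \indicator[A_j] \,\middle|\, \FF_s\right] \le \prod_{j \in J} P_j \cdot \exp\left(\frac{8D}{\alpha^3} \sum_{j \in J} \frac{d_j}{(n-\ch{j})^2}\right).
\]

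Finally, I would invoke~\eqref{eq:sum-j-deg-n-kj-squared} (whose argument in fact bounds $\sum_{j \in J} \tfrac{d_j}{(n-\ch{j})^2} \le \tfrac{1}{\alpha n}$) to conclude that the error factor is at most $\exp\!\big(8D/(\alpha^3 \cdot \alpha n)\big) = \exp(16p/\alpha^4) \le e^{21 p / \alpha^4} = e^\delta$, which combined with the two displays above gives~\eqref{eq:uniform-dist for garbage}. The only point requiring genuine care is re-reading the induction step of Claim~\ref{claim:Ex-ZkI-bound} to confirm that, once the index $j_i$ is removed, the three-way case analysis there collapses to the single ``$\ell \neq j_i$'' case and that the implication $A_{-1} \cap \bigcap_{j < \ell} A_j \Rightarrow u, w \in U_\ell^s$ supplies exactly the exponent $2$ needed to match $P_\ell$; after that check no new estimate is needed, and the whole argument runs as a clean simplification of the proof of Lemma~\ref{lemma:uniform-dist}.
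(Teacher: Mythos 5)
Your proposal is correct and follows essentially the same route as the paper's own proof: the paper likewise sets $E^s = A_{-1} \cap \bigcap_{j \in J} A_j$, redefines $P_j = \bigl(1 - \tfrac{d_j}{n-\ch{j}}\bigr)^2$ for all $j$, reuses item~\ref{item:Ex-Aj-XI} of Claim~\ref{claim:Ex-Aj-XI} together with a straightforward modification of Claim~\ref{claim:Ex-ZkI-bound}, and telescopes $\prod_{j\in J} P_j$ to $\bigl(\tfrac{n-|V(T_s)|}{n-1}\bigr)^2$. Your verification that the error factor $\exp(16p/\alpha^4) \le e^{\delta}$ (with no extra $e^{2D/(\alpha n)}$ term and no $(n-1)^2 \to n^2$ conversion needed) is also the right accounting.
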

\begin{proof}[Proof sketch]
  We argue almost exactly as in the proof of Lemma~\ref{lemma:uniform-dist} with just a few minor modifications. Let $E^s$ denote the event $u,w \not\in \varphi_s(V(T_s))$. We define $A_{-1}$ to be the event $\varphi_s(v_0) \not\in \{u,w\}$ and for every $j \in J$, we let $A_j$ denote the event that $u,w \not\in \{\varphi_s(v_{\ch{j}}), \ldots, \varphi_s(v_{\ch{j}+d_i-1}) \}$, that is, $u$ and $w$ are not among the images of the $d_j$ children of $v_j$ in $T_s$. One immediately sees that
  \[
  E^s = A_{-1} \cap \bigcap_{j \in J} A_j.
  \]
  Item~\ref{item:Ex-Aj-XI} of Claim~\ref{claim:Ex-Aj-XI} is still valid and hence a straightforward modification of Claim~\ref{claim:Ex-ZkI-bound} and of the argument following it gives the estimate
  \[
  \Ex[ E^s \mid \FF_s ] \le \prod_{j \in J} P_j \cdot \exp\left(\frac{8D}{\alpha^3} \cdot \sum_{j \in J} \frac{d_j}{(n-\ch{j})^2}\right),
  \]
  where now $P_j = \left(1 - \frac{d_j}{n-\ch{j}}\right)^2$ for each $j \in J$. We conclude as in the proof of Lemma~\ref{lemma:uniform-dist}, noting additionally that with our new definition of $P_j$, we have the identity
  \[
  \prod_{j \in J} P_j = \left(\frac{n-|V(T_s)|}{n-1}\right)^2.\qedhere
  \]
\end{proof}

We also observe that for every $s \in [N]$, every $v \in V(T_s)$, and every pair of distinct $u, w \in V$,
\begin{equation}
  \label{eq:Pr-phi-vs-is-u-or-w}
  \Pr\big( \varphi_s(v) \in \{u,w\} \wedge \DD_{s-1} \mid \FF_s \big) \le \frac{3}{\alpha n}.
\end{equation}
Indeed, this is clear when $v = v_0$, as $\varphi_s(v_0)$ is a uniformly chosen random element of $V$ and hence the left-hand side of~\eqref{eq:Pr-phi-vs-is-u-or-w} is at most $2/n$. Otherwise, $v$ is the child of some $v_j$ with $j \in J$ and therefore by Claim~\ref{claim:Nisone-size},
\[
\begin{split}
  \Pr\big( \varphi_s(v) \in \{u,w\} \wedge \DD_{s-1} \mid \FF_s \big) & = \Pr\big( \varphi_s(v) \in \{u,w\} \wedge \DD_{s-1} \mid \FF_{s,j} \big) \\
  & = \frac{\indicator[u \in N_j^s(1)] + \indicator[w \in N_j^s(1)]}{|N_j^s(1)|} \le \frac{2}{\alpha n - 2np} \le \frac{3}{\alpha n}.
\end{split}
\]

Let $W_s$ be the set defined in Remark~\ref{remark:Ui-distribution} and observe that $|W_s| = n - |V(T_s)| + 1 \ge \alpha n$. Combining Lemma~\ref{lemma:uniform-dist for garbage} and~\eqref{eq:Pr-phi-vs-is-u-or-w}, we obtain
\begin{equation}
  \label{eq:Pr-u-w-in-Us}
  \Pr\big(\{u,w\} \subseteq W_s \wedge \DD_{s-1} \mid \FF_s\big) \le \left(\frac{|W_s|-1}{n-1}\right)^2 e^\delta + \frac{3}{\alpha n} \le 2 \left(\frac{|W_s|}{n}\right)^2.
\end{equation}

\medskip

Finally, for each $s \in [N]$ and all pairs of distinct $u,w \in V$, we define
\[
Y_{uw,s} = \indicator[\{u,w\} \subseteq W_s \wedge \DD_{s-1}] \cdot \frac{1}{|W_s|}.
\]
As $Y_{uw,s}$ is clearly $\FF_{s+1}$-measurable, inequalities~\eqref{eq:Pr-u-w-in-Us} and $|W_s| \ge \alpha n$ readily imply that the random variables $Y_{uw,1}, \dotsc, Y_{uw,N}$ satisfy the assumptions of Lemma~\ref{lemma:Bennett-plus} with
\[
M \leftarrow \frac{1}{\alpha n}, \qquad \mu \leftarrow \frac{2\max_s|W_s|}{n^2}, \qquad \text{and} \qquad \sigma^2 \leftarrow \frac{2}{n^2}.
\]
Furthermore, letting $t = p \cdot \max_s|W_s| / n$, we see that
\[
\frac{2p}{n} \cdot \max_s|W_s| - N \mu \ge \frac{(1+\eps)p}{n} \cdot \max_s|W_s| \ge t
\]
and also
\[
\frac{Mt}{3} \le \frac{p \cdot \max_s|W_s|}{3\alpha n^2} \qquad \text{and} \qquad N \sigma^2 \le \frac{np}{2} \cdot \frac{2}{n^2} = \frac{p}{n} \le \frac{p \cdot \max_s |W_s|}{\alpha n^2}.
\]
Therefore, it follows from Lemma~\ref{lemma:Bennett-plus} that
\[
  \label{eq:Pr-sum-taunorm}
  \begin{split}
    \Pr\left(Y_{uw,1} + \ldots + Y_{uw,N} > \frac{2p}{n} \cdot \max_s |W_s| \right) & \le \exp\left(- \frac{t^2}{2(N\sigma^2 + Mt/3)} \right) \\
    & \le \exp\left(-\frac{p \alpha \cdot \max_s |W_s|}{3}\right) \le \exp\left(-\frac{p\alpha^2n}{3}\right) \le n^{-10},
  \end{split}
\]
provided that $p \ge 30 \log n/(\alpha^2 n)$. In particular, with probability at least $1 - n^{-8}$, on the event $\DD_N$, every pair of distinct vertices $u,w \in V$ satisfies (recall that $\DD_{s-1} \subseteq \DD_N$ for every $s \in [N]$),
\[
\sum_{s=1}^N \indicator[\{u,w\} \subseteq W_s] \cdot \frac{1}{|W_s|} \le \frac{2p}{n} \cdot \max_s |W_s|.
\]
Finally, as the event $\DD_N$ holds with probability at least $1 - 3n^{-8}$, see~\eqref{eq:Pr-DD-N}, the assertion of Remark~\ref{remark:Ui-distribution} follows.

\section{Concluding remarks}
\label{sec:concluding-remarks}

\begin{itemize}
\item
  The main contribution of this work is the description and the analysis of a randomised algorithm that packs a collection of at most $(1-\eps)np/2$ trees, each of which has at most $(1-\alpha)n$ vertices and maximum degree at most $\Delta$ into the binomial random graph $\Gnp$. It is natural to ask how well our algorithm performs and how tight our analysis is. In the case when both $\alpha$ and $\eps$ are constant, we manage to find a packing under the rather weak assumption that $\Delta < cnp/\log n$ for some positive $c$ that depends only on $\alpha$ and $\eps$. In fact, this is the natural limit of our method (and very likely, also the limit of many other randomised packing strategies), as we shall now argue.

  Suppose that we run our randomised packing algorithm (described in Section~\ref{sec:proof of main}) on a family  of $\lfloor np/4 \rfloor$ trees, each of which has between $n/2$ and $3n/4$ vertices, whose all degrees are either $1$ or $\Delta := \lceil np/\omega \rceil$ for some $\omega = \omega(n) > 4$; clearly, such trees exist. We argue that our algorithm will fail to pack these trees into $\Gnp$ unless $\omega(n) \ge c\log n$ for some positive constant $c$. To see this, note that each tree in the collection contains at least $\omega / (3p)$ vertices of degree $\Delta$. In particular, in any packing of the trees into an $n$-vertex graph, an $\omega/(3pn)$-proportion of the vertices of the host graph will have degree $\Delta$ in the image of any given tree. Now, observe that our randomised packing algorithm has the following nice property. In each of the rounds, a given vertex of the currently embedded tree is mapped to a given vertex of the host graph with probability at most around $1/n$, independently of the earlier rounds; this is an easy consequence of Lemma~\ref{lemma:uniform-dist}. It follows that in each of the rounds, most vertices of the host graph are the images of a vertex of degree $\Delta$ with probability at least $\omega/(4np)$. Therefore, for a typical vertex $v$ in the host graph, the probability that $v$ serves as a vertex of degree $\Delta$ more than $2\omega$ times is at least $e^{-C\omega}$ for some absolute constant $C$. Hence, if $\omega \ll \log n$, then some vertices in the host graph will accumulate total degree of at least $2\omega \cdot \Delta \ge 2np$, which clearly does not usually happen in $\Gnp$.

\item
  While writing the proof of Theorem~\ref{theorem:main}, we were much less concerned with the optimality of the assumptions listed in~\eqref{eq:main-assumptions} with respect to $\alpha$ and $\eps$, settling for a polynomial dependence on both these parameters, wich then results in an upper bound of the form $(np)^{c} / (\log n)^C$ on the maximum degree of the trees in Theorem~\ref{theorem:spanning}. The current value $c = 1/6$ could be improved to any constant smaller than $1/5$ if one replaced the $3/2$ in the estimate~\eqref{eq:Pr-Aj-FFsj-error-term} by a smaller constant larger than $1$. We decided not to do this for the sake of clarity of the presentation. One could most likely improve the estimate~\eqref{eq:sum-degvk-nkk} by using the inequality $|I| \le d_j \le \Delta$. Again, we decided not to pursue this direction, as this could only really affect the case $\Delta \ll \sqrt{n}$. It would be extremely interesting to relax the assumption $\Delta \ll (np)^{1/2}$ of Theorem~\ref{theorem:spanning-weaker}, even for small values of $p$, as this would most likely require far-reaching improvements of our packing strategy.

\item
  It is plausible that one could improve our algorithm to produce a packing of trees with maximum degree as large as $\Theta(np)$ in $\Gnp$. For example, one can try, in each time step, to map vertices of ``high'' degrees in the tree to vertices of ``small'' degrees in the current embedding. This would prevent vertices from begin images of high degree vertices too often, and could potentially remove the $1/\log n$ factor from the current upper bound on $\Delta$. Having said that, the analysis of such an algorithm would most likely differ significantly from our current analysis (and would probably be much more complicated). Since anyway we do not believe that such a naive random procedure will resolve Conjecture~\ref{tpc}, we did not try to continue this argument. Still, it would be very interesting to see a clean analysis of an algorithm of a similar type.

\item
  Our embedding scheme relies very strongly on the fact that we embed only graphs that are $1$-degenerate (recall that a graph $H$ is $d$-degenerate if and only if there exists a labeling $v_1, \dotsc, v_m$ of $V(H)$ for which every $v_i$ has at most $d$ neighbours among $v_1,\dotsc, v_{i-1}$). Indeed, following such an ordering, in each time step we try to embed a new vertex by exposing exactly one new edge, and therefore the algorithm is not ``wasteful'' and leaves us a lot of ``randomness'' for later steps. It would be very interesting to find random embedding schemes employing the ``online sprinkling'' idea for general graphs or, at the very least, for almost-spanning graphs with bounded maximum degree.
\end{itemize}

\medskip
\noindent
\textbf{Acknowledgment.}
We would like to thank Choongbum Lee for many stimulating discussions and ideas on an earlier version of this paper.
%%
%%  REFERENCES
%%

\bibliographystyle{amsplain}
\nocite{*}
\bibliography{PackingTreesInGnp}

%%
%%  APPENDIX
%%

\appendix

\section{Proof of Lemma~\ref{lemma:Bennett-plus}}

\label{sec:proof-Bennett-plus}

In the proof of Lemma~\ref{lemma:Bennett-plus}, we shall use the following (conditional version of the) estimate on the moment generating function of a bounded random variable with bounded second moment.

\begin{lemma}[{\cite{Be62}; see also~\cite[Theorem~2.9]{BoLuMa13}}]
  \label{lemma:Bennett-MGF}
  Let $X$ be a random variable satisfying
  \[
  0 \le X \le M, \qquad \Ex[X \mid \FF] \le \mu, \qquad \text{and} \qquad \Ex[X^2 \mid \FF] \le \sigma^2
  \]
  for some $\sigma$-field $\FF$ and reals $M$, $\mu$, and $\sigma$. Then, for all $\lambda \in \RR$,
  \[
  \Ex\left[e^{\lambda X} \mid \FF\right] \le \exp\left(\mu \lambda + \frac{\sigma^2}{M^2} \phi(M\lambda) \right).
  \]
  where $\phi(x) = e^x - x - 1$.
\end{lemma}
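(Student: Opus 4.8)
The plan is to prove the following formally stronger (but, via $1+y\le e^y$, equivalent) bound for every $\lambda\ge 0$, which is the only range of $\lambda$ that is ever used in our applications:
\[
\Ex\left[e^{\lambda X}\mid\FF\right]\le 1+\mu\lambda+\frac{\sigma^2}{M^2}\,\phi(M\lambda).
\]
The asserted inequality then follows immediately by applying $1+y\le e^y$ with $y=\mu\lambda+\frac{\sigma^2}{M^2}\phi(M\lambda)$. The starting point is the elementary identity $e^{\lambda x}=1+\lambda x+x^2\psi(x)$, valid for all $x\in\RR$, where $\psi(x)=(e^{\lambda x}-1-\lambda x)/x^2$ for $x\neq 0$ and $\psi(0)=\lambda^2/2$ (so that $\psi$ is continuous at the origin).

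The crucial point is that $\psi$ is non-negative and non-decreasing on $[0,\infty)$ whenever $\lambda\ge 0$. I would justify this simply by expanding
\[
\psi(x)=\sum_{k\ge 0}\frac{\lambda^{k+2}}{(k+2)!}\,x^k,
\]
which is a power series (convergent for every real $x$) all of whose coefficients are non-negative when $\lambda\ge 0$. Consequently $\psi$ is non-negative and non-decreasing on $[0,\infty)$, and since $0\le X\le M$ by hypothesis, we obtain the pointwise bound $\psi(X)\le\psi(M)=\phi(\lambda M)/M^2$, and hence
\[
e^{\lambda X}=1+\lambda X+X^2\psi(X)\le 1+\lambda X+\frac{\phi(\lambda M)}{M^2}\,X^2.
\]

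It then remains to take conditional expectations in the last display. Since $\lambda\ge 0$ and $\Ex[X\mid\FF]\le\mu$, the linear term is bounded by $\lambda\mu$; since $\phi(\lambda M)/M^2\ge 0$ and $\Ex[X^2\mid\FF]\le\sigma^2$, the quadratic term is bounded by $\sigma^2\phi(\lambda M)/M^2$. Adding these up yields exactly the displayed bound, and, as explained, the lemma follows. The only step that is not entirely routine is the monotonicity of $\psi$ on $[0,\infty)$, but even this reduces to the trivial observation that its Taylor coefficients are non-negative; I therefore do not anticipate any genuine obstacle, and the remaining details are all elementary.
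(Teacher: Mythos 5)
Your proof is correct. Note that the paper does not actually prove Lemma~\ref{lemma:Bennett-MGF}: it is quoted from Bennett's paper (see also \cite[Theorem~2.9]{BoLuMa13}), so there is no in-paper argument to compare against; what you give is precisely the standard Bennett argument, written out in full. The decomposition $e^{\lambda x}=1+\lambda x+x^2\psi(x)$, the observation that $\psi$ has nonnegative Taylor coefficients for $\lambda\ge 0$ and is therefore nondecreasing on $[0,\infty)$, the pointwise bound $\psi(X)\le\psi(M)=\phi(\lambda M)/M^2$, and the final passage to conditional expectations (which is legitimate since both $\lambda$ and $\phi(\lambda M)/M^2$ are nonnegative, so the hypotheses $\Ex[X\mid\FF]\le\mu$ and $\Ex[X^2\mid\FF]\le\sigma^2$ can be applied term by term) are all sound. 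Two small remarks. First, you prove the bound only for $\lambda\ge 0$; this is indeed the only range used anywhere in the paper (in Claim~\ref{claim:Azuma-Bennett} and in the Cram\'er--Chernoff step one takes $\lambda=\log(1+u)/M>0$), and in fact the lemma's phrase ``for all $\lambda\in\RR$'' is an overstatement: with only an upper bound on $\Ex[X\mid\FF]$ the term $\mu\lambda$ points the wrong way when $\lambda<0$, and one can construct counterexamples (e.g.\ $X\equiv 0$ with $\mu>\sigma^2/M$ and $\lambda\to-\infty$). So restricting to $\lambda\ge0$ is not a gap but a necessary correction. Second, your claim that the bound $1+\mu\lambda+\frac{\sigma^2}{M^2}\phi(M\lambda)$ is ``equivalent'' to the exponential form is a slight overstatement --- it is stronger, and implies the stated form via $1+y\le e^y$ --- but this only works in your favour.
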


\begin{proof}[Proof of Lemma~\ref{lemma:Bennett-plus}]
  Suppose that $X_1, \ldots, X_N$ and $M$, $\mu$, and $\sigma$ satisfy the assumptions of the lemma. We prove the claimed upper tail estimate using a standard Azuma-type argument. We first derive an upper bound on the moment generating function of $\sum_{i=1}^N X_i$. As in the statement of Lemma~\ref{lemma:Bennett-MGF}, let $\phi(x) = e^x - x - 1$.
  \begin{claim}
    \label{claim:Azuma-Bennett}
    For all $\lambda \ge 0$ and $i \in \{0, \ldots, N\}$, we have
    \begin{equation}
      \label{eq:Azuma-Bennett}
      \Ex\left[\exp\left(\lambda \sum_{j=1}^i X_j\right)\right] \le \exp\left(\mu\lambda + \frac{\sigma^2}{M^2}\phi(M\lambda) \right)^i.
    \end{equation}
  \end{claim}
  We prove the claim by induction on $i$. For $i = 0$, there is nothing to prove, so assume that $i \ge 1$ and that~\eqref{eq:Azuma-Bennett} holds with $i$ replaced by $i-1$. Note that
  \begin{equation}
    \label{eq:conditional-Ex}
    \Ex\left[\exp\left(\lambda \sum_{j=1}^i X_j\right)\right] = \Ex\left[\exp\left(\lambda \sum_{j=1}^{i-1} X_j\right) \cdot \Ex\left[e^{\lambda X_i} \mid X_1, \ldots, X_{i-1} \right]\right].
  \end{equation}
  Using Lemma~\ref{lemma:Bennett-MGF} to bound the conditional expectation in the right-hand side of~\eqref{eq:conditional-Ex}, we obtain
  \[
  \Ex\left[\exp\left(\lambda \sum_{j=1}^i X_j\right)\right] \le \Ex\left[\exp\left(\lambda \sum_{j=1}^{i-1} X_j\right) \cdot  \exp\left(\mu \lambda + \frac{\sigma^2}{M^2} \phi(M\lambda) \right)\right],
  \]
  which together with our inductive assumption immediately gives~\eqref{eq:Azuma-Bennett}.

  \medskip

  With the upper bound~\eqref{eq:conditional-Ex} in place, we use the Cram\'er--Chernoff method to obtain the claimed estimate for the upper tail. Indeed, for all positive $\lambda$ and $t$, by Markov's inequality,
  \begin{align*}
    \Pr\left( \sum_{j=1}^N X_j \ge N\mu + t \right) & =   \Pr\left( \exp\left( \lambda \sum_{j=1}^N X_j\right) \ge e^{\lambda(N\mu + t)} \right) \\
                                                    & \le e^{-\lambda(N\mu + t)} \Ex\left[\exp\left( \lambda \sum_{j=1}^N X_j\right)\right] \le \exp\left(\frac{N\sigma^2}{M^2} \phi(M\lambda) - \lambda t\right),
  \end{align*}
  where the last inequality is~\eqref{eq:Azuma-Bennett} with $i = N$. Letting $u = Mt/(N\sigma^2)$ and $\lambda = \log(1+u) / M$, we obtain
  \begin{equation}
    \label{eq:upper-bound-raw}
    \Pr\left( \sum_{j=1}^N X_j \ge N\mu + t \right) \le \exp\left(-\frac{N\sigma^2}{M^2} \cdot \big((1+u)\log(1+u)-u\big)\right).
  \end{equation}
  Finally, the claimed estimate follows from the following easy-to-prove inequality:
  \[
  (1+u) \log(1+u) - u \ge \frac{u^2}{2(1+u/3)} \qquad \text{for all $u > 0$}.\qedhere
  \]
\end{proof}

\end{document}